\theoremstyle{plain}
\newtheorem{theo}{Theorem}[section]
\newtheorem{lm}[theo]{Lemma}
\newtheorem*{defin}{Definition}
\newtheorem{cor}[theo]{Corollary}
\newtheorem{prop}[theo]{Proposition}
\theoremstyle{plain}
\newtheorem{question}{Question}
\DeclareMathOperator{\diag}{diag}
\DeclareMathOperator{\sgn}{sgn}
\DeclareMathOperator{\Ha}{H}
\DeclareMathOperator{\PW}{PW} 
\DeclareMathOperator{\supp}{supp} 
\DeclareMathOperator{\spans}{span} 
\DeclareMathOperator{\Real}{Re} 
\DeclareMathOperator{\ext}{ext} 
\DeclareMathOperator{\dist}{dist} 
\DeclareMathOperator{\Imaginary}{Im} 
\title[Hankel operators on Paley--Wiener spaces]{Besov spaces and Schatten class Hankel operators for Hardy and Paley--Wiener spaces in higher dimensions}
\author[K. Bampouras]{Konstantinos Bampouras}
\address{Department of Mathematical Sciences, Norwegian University of Science and Technology (NTNU), 7491 Trondheim, Norway}
\email{konstantinos.bampouras@ntnu.no}
\author[K.-M. Perfekt]{Karl-Mikael Perfekt}
\address{Department of Mathematical Sciences, Norwegian University of Science and Technology (NTNU), 7491 Trondheim, Norway}
\email{karl-mikael.perfekt@ntnu.no}
\date{\today}
\begin{document}
	\begin{abstract}
		We consider Schatten class membership of Hankel operators on Paley--Wiener spaces of convex $\Omega \subset \mathbb{R}^n$, both for bounded and unbounded domains. In particular, the classical product Hardy spaces fit within our theory. For admissible domains, we develop a framework and theory of Besov spaces of Paley--Wiener type, and prove that a Hankel operator belongs to the Schatten class $S^p$ if and only if its symbol belongs to a corresponding Besov space, for $1 \leq p \leq 2$. We extend this result to all $1 \leq p < \infty$ for the classical product Hardy spaces and to $1 \leq p < 2(n+1)/(n-1)$ for the Paley--Wiener space of a bounded smooth domain $\Omega \subset \mathbb{R}^n$ of strictly positive curvature.
	\end{abstract}
	
	\maketitle
	
	\section{Introduction}
	A classical Hankel operator $H_\varphi \colon H^2(\mathbb{R}) \to H^2(\mathbb{R})$ is defined by the equation
	$$\widehat{H_\varphi f}(x) = \int_0^\infty \widehat{\varphi}(x+y) \hat{f}(y) \, dy, \qquad x > 0.$$
	Here $\hat{f} = \mathcal{F} f$ denotes the Fourier transform, and $H^2(\mathbb{R}) = \mathcal{F}^{-1} L^2(\mathbb{R}_+)$ is the Hardy space of the upper half-plane. By transfer to the Hardy space $H^2(\mathbb{T})$ of the disc, it is well known that $H_\varphi$ is unitarily equivalent to a Hankel matrix operator $\{a_{n+m}\}_{n,m \geq 0}$. Nehari's theorem states that $H_\varphi \colon H^2(\mathbb{R}) \to H^2(\mathbb{R})$ is bounded if and only if there is a bounded function $\psi \in L^\infty(\mathbb{R})$ such that $H_\varphi = H_\psi$.  Several proofs of this key theorem in the theory of Hardy spaces can be found in \cite{Nik02}.  Schatten class membership, $H_\varphi \in S^p$, $0 < p < \infty$, was later described by Peller \cite{MR1949210}, in terms of Besov spaces. See also \cite{MR674875} for intrinsic results in the language of $H^2(\mathbb{R})$.
	
	In the multi-parameter setting, understanding the boundedness of (\textit{small}) Hankel operators $H_\varphi \colon H^2(\mathbb{R} \times \cdots \times \mathbb{R}) \to H^2(\mathbb{R} \times \cdots \times \mathbb{R})$, 
	$$\widehat{H_\varphi f}(x) = \int_{\mathbb{R}_+^n} \widehat{\varphi}(x+y) \hat{f}(y) \, dy, \qquad x \in \mathbb{R}_+^n,$$
	is well-recognized as one of the most important endeavors in higher-dimensional Hardy space theory and multi-parameter harmonic analysis. Note that the product Hardy space 
	$$H^2(\mathbb{R} \times \cdots \times \mathbb{R}) \simeq H^2(\mathbb{T}^n)$$
	can be understood as the inverse Fourier transform of $L^2(\mathbb{R}^n_+)$, but that proving Nehari's theorem in this context remains an unsolved problem despite persistent efforts over decades \cite{ChaFef85, FergLac02, FergSad00, HTV21}. Characterizing the Schatten class membership of multi-parameter operators therefore presents an intriguing problem, see \cite{MR924766, MR1001657}. In \cite{MR2097606}, it was observed that the vector-valued version of Peller's Schatten class characterization can be iterated to describe the operators $H_\varphi \in S^p(H^2(\mathbb{R} \times \cdots \times \mathbb{R}))$, when $1 < p < \infty$.
	Much more recently, Schatten membership was described for a class of multi-parameter dyadic paraproducts related to \textit{big} Hankel operators \cite{LLW23} -- however, non-zero big Hankel operators cannot be compact when $n > 1$ \cite{MR1415032}.
	
	A relatively simple special case of our theory recovers and extends the characterization of the Schatten classes for \textit{small} Hankel operators to the range $1\leq p<\infty$. To obtain a complete description, we must introduce special Besov spaces $B_{p,q}^{s}(\mathbb{R}^n_+)$ adapted to functions with Fourier support in $\mathbb{R}^n_+$; the details can be found in Sections \ref{sec:besovspace} and \ref{sec:poly}. We will work with a class of weighted Hankel operators $H^{\sigma, \tau}_\varphi$, $\sigma, \tau \in \mathbb{R}$, defined by the equation
	$$\widehat{\Ha_{\varphi}^{\sigma,\tau}f}(x)= \int_{\mathbb{R}^n_+} \widehat{\varphi}(x+y)\widehat{f}(y) x^\sigma y^\tau \, dy, \qquad x \in \mathbb{R}_n^+, \; x^\sigma = x_1^\sigma \ldots x_n^{\sigma}.$$
	\begin{theo}\label{Hardyintro}
		Let $1\leq p<\infty$ and $\sigma,\tau>\max(-\frac{1}{2},-\frac{1}{p})$. The extended Hankel operator $\Ha_\varphi^{\sigma, \tau} \colon H^2(\mathbb{R} \times \cdots \times \mathbb{R}) \to H^2(\mathbb{R} \times \cdots \times \mathbb{R})$ belongs to the Schatten class $S^{p}$ if and only if $\varphi\in B_{p,p}^{\frac{1}{p}}(\mathbb{R}^n_+)$.
	\end{theo}
	
	The main goal of this paper is to investigate the Schatten class membership of Hankel operators
	$\Ha_{\varphi} \colon \PW(\Omega) \to \PW(\Omega)$ on the Paley--Wiener space 
	$$\PW(\Omega) = \{f \in L^2(\mathbb{R}^n) \, : \, \supp \hat{f} \subset \overline{\Omega}\}$$
	associated with a convex domain $\Omega \subset \mathbb{R}^n$ which does not contain lines,
	$$\widehat{\Ha_{\varphi}f}(x)=\int_{\Omega}\widehat{\varphi}(x+y)\widehat{f}(y) \, dy, \qquad x \in \Omega.$$
	In particular, small Hankel operators on the Hardy space fit into the framework with $\Omega = \mathbb{R}^n_+$.
	
	The case when $\Omega = (-1,1) \subset \mathbb{R}$ is an interval was given in-depth treatment by Peller \cite{Peller88} and Rochberg \cite{MR878246}, in particular, describing when $\Ha_\varphi \in S^p(\PW((-1,1)))$, $0 < p < \infty$, in terms of a Besov space condition. For $1 \leq p < \infty$ and $n > 1$, Peng \cite{MR1001657} gave a characterization of the Schatten class operators $\Ha_\varphi \in S^p(\PW(C_n))$ for the Paley--Wiener space of the cube $C_n = (-1,1)^n$. Many classical ideas concerning Schatten class membership of integral operators are also applicable, ideas found for example in the papers of Janson and Peetre \cite{MR924766} and Timotin \cite{MR837521}, and we wish to use these in our study of general domains $\Omega$.
	
	However, it is a major challenge to find the correct definition of the Paley--Wiener type Besov spaces $B^{\frac{1}{p}}_{p,p}(2\Omega)$, and to develop a satisfactory theory of these which, in particular, must include basic multiplier results. Unlike when $\Omega = \mathbb{R}^n$, or when $\Omega = (0,\infty)$, the instrumental case for $\Omega = (-1,1)$, we are not able to exploit a homogeneous structure of $\Omega$. Instead, our scale of Besov spaces must be adapted to the geometry of $\Omega$, as well as to the weight
	\[
	\omega_{\Omega}(x)= m(\Omega\cap(2x-\Omega)) \qquad x \in \Omega.
	\]
	The sets $M_\Omega(x) = \Omega\cap (2x-\Omega)$ are known as Macbeath regions. $M_\Omega(x)$ is the largest subset of $\Omega$ that is symmetric with respect to $x$, and has previously been used to study boundary behavior and approximation of convex bodies.  In particular, for bounded sets, the asymptotic behavior of $\omega_\Omega(x) = m(M_\Omega(x))$ is well understood \cite{MR2327289,MR1194970,SW90}.
	
	Accordingly, we consider a class of weighted extended Hankel operators
	$\Ha_{\varphi}^{\sigma,\tau}:\PW(\Omega)\to \PW(\Omega)$, given by
	$$\widehat{\Ha_{\varphi}^{\sigma,\tau}f}(x)=\int_\Omega \widehat{\varphi}(x+y)\widehat{f}(y)\omega_{\Omega}^{\sigma}(x)\omega_{\Omega}^{\tau}(y) \, dy, \qquad x \in \Omega.$$
	Our second main result reads as follows. Note that smooth domains with strictly positive curvature are known as strongly convex domains.
	\begin{theo}\label{thm:mainsmooth}
		Let $\Omega \subset \mathbb{R}^n$ be a bounded convex $C^2$-domain with non-vanishing boundary curvatures. Then, 
		\begin{enumerate}
			\item For every $1\leq p<\infty$ and $\sigma,\tau>-\frac{1}{n+1}+\max(0,\frac{1}{2}-\frac{1}{p})$, $\Ha_\varphi^{\sigma,\tau}$ is in the Schatten class $S^{p}(\PW(\Omega))$ if and only if $\varphi\in B_{p,p}^{\sigma+\tau+\frac{1}{p}}(2\Omega)$.
			\item In particular, for every $1\leq p<2\frac{n+1}{n-1}$, $\Ha_\varphi$ is in the Schatten class $S^{p}(\PW(\Omega))$ if and only if $\varphi\in B_{p,p}^{\frac{1}{p}}(2\Omega)$.
		\end{enumerate}
	\end{theo}
	The given parameter range for $\sigma, \tau$ is sharp when $1 \leq p \leq 2$, but very unlikely to be sharp for larger $p$.  We also remark that in the disc setting $\Omega = \mathbb{D}$, Schatten classes for Hankel operators were previously considered in the smaller range $1 \leq p \leq 2$ \cite{MR953994}.
	
	To construct the Besov spaces $B_{p,q}^{s}(\Omega)$ of Paley--Wiener type, we introduce a notion of \textit{admissible} domains $\Omega$. Roughly speaking, an admissible domain $\Omega$ admits an open cover $\{A^i_j\}$ of parallelepipeds, each piece $A^i_j$ contained in the level set $\omega_{\Omega}^{-1}((a^{j}, a^{j+1}))$, $j\in\mathbb{Z}$, and of size $m(A^i_j) \approx a^{j}$. Furthermore, the cover must satisfy some natural geometrical properties. See Section~\ref{sec:besovspace} for details.
	
	Actually, it turns out that every convex domain not containing lines is admissible. This will be demonstrated in
	forthcoming work of the authors \cite{BampourasPerfekt2026}, to appear in preprint-form in 2026. However, as we do not prove this result in this article, we continue to state our results in terms of admissible domains.
	
	The properties of the cover $\{A^i_j\}$ ensure that there exists a subordinate partition of unity $(\widehat{\varphi}^i_j)$ of $\Omega$ such that $\sup_{j,i} \|\varphi^i_j\|_{L^1(\mathbb{R}^n)} < \infty$. The Besov space $B_{p,q}^{s}(\Omega)$ is then defined with respect to any such partition of unity,
	$$\|f\|_{B_{p,q}^{s}(\Omega)}=\left(\sum_{i,j}a^{qjs}\|f\ast\varphi^{i}_{j}\|^{q}_{L^{p}}\right)^{\frac{1}{q}}.$$
	Since $\omega_{\Omega}(x) \approx a^j$ for $x \in A^i_j$, this definition is indeed adapted to the weight $\omega_{\Omega}$.
	
	In the range $1 \leq p \leq 2$, we will demonstrate sufficiency of the Besov space condition for a general admissible domain. 
	\begin{theo}\label{thm:mainsuf}
		Let $\Omega$ be an admissible subset of $\mathbb{R}^{n}$, and let $1\leq p\leq 2$. Suppose that $\varphi\in B_{p,p}^{\sigma+\tau+\frac{1}{p}}(2\Omega)$ for some $\sigma,\tau>-\frac{1}{p(n+1)}$. Then $\Ha_{\varphi}^{\sigma,\tau}\in S^{p}(\PW(\Omega))$. Furthermore, if $\Omega$ is a polyhedron the same holds true for $\sigma,\tau>-\frac{1}{2p}$.
	\end{theo}
	
	On the other hand, we are able to prove the necessity of the Besov space condition in full generality.
	\begin{theo}\label{thm:mainnec}
		Let $\Omega$ be an admissible subset of $\mathbb{R}^{n}$, and let $1 \leq p \leq \infty$. Suppose that $\Ha^{\sigma,\tau}_{\varphi}\in S^{p}(\PW(\Omega))$ for some $\sigma,\tau\in\mathbb{R}$. Then $\varphi\in B_{p,p}^{\sigma+\tau+\frac{1}{p}}(2\Omega)$.
	\end{theo}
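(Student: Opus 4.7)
The plan parallels the sufficiency proof: decompose the symbol along the partition of unity $(\widehat{\varphi}^{i}_{j})$ of $2\Omega$, decompose the Hankel operator correspondingly, and extract the Besov norm from the Schatten norm by combining an almost-orthogonality argument with a local lower bound on each piece. Since $\widehat{\varphi}$ is supported in $2\Omega$, I write
\[
\varphi=\sum_{i,j}\varphi\ast\varphi^{i}_{j},\qquad \Ha^{\sigma,\tau}_{\varphi}=\sum_{i,j}\Ha^{\sigma,\tau}_{\varphi\ast\varphi^{i}_{j}},
\]
and each summand has a kernel $\widehat{\varphi\ast\varphi^{i}_{j}}(x+y)\,\omega^{\sigma}_{\frac{1}{2}\Omega}(x)\,\omega^{\tau}_{\frac{1}{2}\Omega}(y)$ that is nonzero only where $x+y$ lies near $E^{i}_{j}$ in $2\Omega$.

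The first step is almost-orthogonality: I aim to show that
\[
\Bigl(\sum_{i,j}\bigl\|\Ha^{\sigma,\tau}_{\varphi\ast\varphi^{i}_{j}}\bigr\|^{p}_{S^{p}}\Bigr)^{1/p}\lesssim\bigl\|\Ha^{\sigma,\tau}_{\varphi}\bigr\|_{S^{p}}
\]
(interpreting the left side as a supremum when $p=\infty$). The constraint $x+y\in E^{i}_{j}$ confines each summand, up to bounded multiplicity, to those pairs $(x,y)\in\Omega\times\Omega$ with $x\in E^{l_{1}}_{k_{1}}$, $y\in E^{l_{2}}_{k_{2}}$, and $(E^{l_{1}}_{k_{1}}+E^{l_{2}}_{k_{2}})\cap 2E^{i}_{j}\neq\emptyset$. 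By admissibility, each block $E^{i}_{j}$ interacts with only a bounded number of such pairs; after passing to a frequency-localized orthonormal basis of $\PW(\Omega)$ subordinate to the cells $E^{l}_{k}$, the operators $\Ha^{\sigma,\tau}_{\varphi\ast\varphi^{i}_{j}}$ have bounded row and column overlap, and the $S^{p}$ triangle inequality yields the claim.

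The second step is a local lower bound
\[
\|\varphi\ast\varphi^{i}_{j}\|^{p}_{L^{p}}\lesssim a^{pj(\sigma+\tau)+j}\bigl\|\Ha^{\sigma,\tau}_{\varphi\ast\varphi^{i}_{j}}\bigr\|^{p}_{S^{p}}.
\]
Since $\widehat{\varphi\ast\varphi^{i}_{j}}$ is Fourier-supported in a set comparable to the parallelepiped $A^{i}_{j}$ of volume $\approx a^{-j}$, I translate and affinely rescale $A^{i}_{j}$ to a fixed unit parallelepiped, reducing to a model Hankel-type integral operator on a fixed box with kernel of bounded frequency support. For this unweighted model, the Schatten $p$-norm is classically comparable to the $L^{p}$-norm of the symbol, cf.~\cite{MR924766,MR1001657}; the Jacobian of the rescaling supplies the factor $a^{j}$, and the weights $\omega^{\sigma}_{\frac{1}{2}\Omega}(x)\omega^{\tau}_{\frac{1}{2}\Omega}(y)$ contribute $a^{-j(\sigma+\tau)}$ on the pairs $(x,y)$ that carry the bulk of the operator mass.

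Combining the two steps gives
\[
\|\varphi\|^{p}_{B^{\sigma+\tau+\frac{1}{p}}_{p,p}(2\Omega)}=\sum_{i,j}a^{-pj(\sigma+\tau)-j}\|\varphi\ast\varphi^{i}_{j}\|^{p}_{L^{p}}\lesssim\sum_{i,j}\bigl\|\Ha^{\sigma,\tau}_{\varphi\ast\varphi^{i}_{j}}\bigr\|^{p}_{S^{p}}\lesssim\bigl\|\Ha^{\sigma,\tau}_{\varphi}\bigr\|^{p}_{S^{p}}.
\]
The main obstacle will be the weight calculation in the local lower bound, since the level of $\omega_{\frac{1}{2}\Omega}$ at $x$ and at $y$ is not determined by the level of $\omega_{\Omega}$ at $x+y$. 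For negative $\sigma$ or $\tau$ the weight is unbounded near $\partial\Omega$, and one must verify that the near-boundary pairs do not destroy the comparability with the model unweighted case; this should be achievable using the parallelepiped geometry of $A^{i}_{j}$ and the bounded-overlap combinatorics guaranteed by admissibility.
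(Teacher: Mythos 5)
The almost-orthogonality claim in your first step is the gap, and it is not fixable as stated. The operators $\Ha^{\sigma,\tau}_{\varphi\ast\varphi^{i}_{j}}$ do not have bounded row and column overlap in any cell-adapted orthonormal basis: for $x\in E^{l}_{k}$, the row of $\Ha^{\sigma,\tau}_{\varphi\ast\varphi^{i}_{j}}$ at $x$ is nonzero whenever $(2A^{i}_{j}-x)\cap\Omega\neq\emptyset$, and the number of indices $(j,i)$ with this property is of order $\sum_{j\lesssim k}n_{j}$, which is unbounded as $k\to\infty$. Property~\ref{2} bounds the number of $(\beta,\gamma)$ hit by a fixed sum $E^{i}_{j}+E^{l}_{k}$, not the number of pairs hitting a fixed $E^{\gamma}_{\beta}$. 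Moreover, the inequality $\sum_{j,i}\|T_{j,i}\|^{p}_{S^{p}}\lesssim\|\sum_{j,i}T_{j,i}\|^{p}_{S^{p}}$ for kernel operators $T_{j,i}$ supported on essentially disjoint anti-diagonal strips is simply false: take $T$ to be the $N\times N$ all-ones matrix, so $\|T\|_{S^{1}}=N$, and let $T_{k}$ be its restriction to the $k$-th anti-diagonal. Each $T_{k}$ is a partial permutation matrix with $\|T_{k}\|_{S^{1}}=\min(k+1,2N-1-k)$, so $\sum_{k}\|T_{k}\|_{S^{1}}\approx N^{2}\gg N$.

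The paper avoids this entirely by compressing the operator rather than decomposing the symbol. Let $P^{i}_{j}$ denote the orthogonal projection onto $\PW(A^{i}_{j})$. Since the $A^{i}_{j}$ have bounded overlap (property~\ref{4}), a standard block-diagonal argument gives $\sum_{j,i}\|P^{i}_{j}\Ha^{\sigma,\tau}_{\varphi}P^{i}_{j}\|^{p}_{S^{p}}\lesssim\|\Ha^{\sigma,\tau}_{\varphi}\|^{p}_{S^{p}}$; this replaces your step~1 and is the key move you are missing. The compression $P^{i}_{j}\Ha^{\sigma,\tau}_{\varphi}P^{i}_{j}$ has kernel living on the rectangle $A^{i}_{j}\times A^{i}_{j}$ rather than on an anti-diagonal strip, so both $x$ and $y$ lie in $A^{i}_{j}$, hence $x+y\in 2A^{i}_{j}$ and, crucially, both weights $\omega^{\sigma}_{\frac{1}{2}\Omega}(x)$ and $\omega^{\tau}_{\frac{1}{2}\Omega}(y)$ are of constant order $a^{-j\sigma}$ and $a^{-j\tau}$ there. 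This eliminates the near-boundary difficulty you flagged at the end. The local lower bound then takes the form $\|P^{i}_{j}\Ha^{\sigma,\tau}_{\varphi}P^{i}_{j}\|_{S^{p}}\gtrsim a^{-j(\sigma+\tau+1/p)}\|\varphi\ast\psi^{i}_{j}\|_{L^{p}}$ with $\psi^{i}_{j}=a^{j}(\varphi^{i}_{j})^{2}$ --- the square reflecting the two-sided compression --- proved at the endpoints $p=1$ (Lemma~\ref{Timotin lemma}) and $p=\infty$ (testing against modulated bumps) and interpolated, with Lemma~\ref{besov on 2} guaranteeing that $(\psi^{i}_{j})$ generates the Besov norm. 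Your rescaling idea for the local bound is close in spirit to the endpoint arguments, but without the compression the weights are not constant on the support and the argument does not close.
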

	
	Combining Theorem~\ref{thm:mainsuf} and Theorem~\ref{thm:mainnec} we obtain a characterization for $1 \leq p \leq 2$, valid for general admissible domains $\Omega$.
	\begin{cor}\label{cor:mainchar}
		Let $\Omega$ be an admissible subset of $\mathbb{R}^{n}$. Then, for $1 \leq p \leq 2$ and $\sigma, \tau >-\frac{1}{p(n+1)}$, $$\Ha_{\varphi}^{\sigma,\tau}\in S^{p}(\PW(\Omega))\text{   if and only if  }\varphi\in B_{p,p}^{\sigma+\tau+\frac{1}{p}}(2\Omega),$$
		and the corresponding norms are equivalent. Furthermore, if $\Omega$ is a polyhedron the same holds true for $\sigma,\tau>-\frac{1}{2p}$.
	\end{cor}
	For simple polytopes $P$, our framework also lets us characterize the extended Hankel operators $\Ha_{\varphi}^{\sigma, \tau} \in S^{p}(\PW(P))$, $1 \leq p < \infty$, $\sigma,\tau>\max(-\frac{1}{2},-\frac{1}{p})$, see Theorem~\ref{thm:mainpoly}. Recall that a polytope in $\mathbb{R}^n$ is simple if each of its vertices is adjacent to exactly $n$ edges. 
	
	This paper is organized as follows. In Section~\ref{sec:besovspace} we lay out the fundamental properties of the weight function $\omega_\Omega$, and we develop our framework for Besov spaces $B^s_{p,q}(\Omega)$. In Section~\ref{sec:suf} we establish sufficient conditions for Schatten class membership of $\Ha_{\varphi}^{\sigma,\tau}$ when $p \in \{1, 2, \infty\}$, which in conjunction with interpolation yields results for $1 \leq p < \infty$. In Section~\ref{sec:nec} we prove the necessity of the Besov space condition. In Section~\ref{sec:smooth} we treat smooth domains with positive curvature, and in Section~\ref{sec:poly} we consider the product Hardy spaces and Paley--Wiener spaces of polytopes. In Section~\ref{sec:open} we discuss some of the many open problems raised by our work.
	
	\subsection*{Notation} As usual, we write that $a \lesssim b$ if there is a constant $K > 0$ (possibly depending on parameters understood from context) such that $a \leq Kb$, and $a \approx b$ if additionally $b \lesssim a$. In the same vein we will reserve the letters $C$ and $K$ for constants, the values of which may vary from line to line.
	
	\subsection*{Acknowledgments} The second author was supported by grant no. 334466 of the Research Council of Norway, “Fourier Methods and Multiplicative Analysis”.
	
	\section{Besov spaces of Paley--Wiener type} \label{sec:besovspace}
	\subsection*{The weight function associated with $\Omega$}
	The appropriate weight to consider in the construction of our Besov spaces is readily identified by considering the usual characterization of Hilbert--Schmidt integral operators. If $\Ha_\varphi \in S^2$, $\hat{\varphi}$ must be a measurable function in $2\Omega$, and 
	\[
	\|\Ha_\varphi\|_{S^2}^2 = \int_\Omega \int_\Omega |\hat{\varphi}(x+y)|^2 \, dx \, dy = \int_{2\Omega} |\hat{\varphi}(x)|^2 m(\Omega\cap(x-\Omega)) \, dx,
	\]
	where $m$ denotes the $n$-dimensional Lebesgue measure. Accordingly, we introduce the weight 
	\[
	\omega_{\Omega}(x)= m(\Omega\cap(2x-\Omega)), \qquad x \in \mathbb{R}^n.
	\]
	Since $\Omega$ does not contain lines, it is not difficult to see that $\omega_{\Omega}(x)$ is finite everywhere. We will consider the following class of weighted Hankel operators.
	\begin{defin}
		Let $\widehat{\varphi}$ be a distribution in $2\Omega$. For $\sigma,\tau\in\mathbb{R}$, we define the extended Hankel operator $\Ha_{\varphi}^{\sigma,\tau}:\PW(\Omega)\to \PW(\Omega)$ by 
		$$\widehat{\Ha_{\varphi}^{\sigma,\tau}f}(x)=\int\widehat{\varphi}(x+y)\widehat{f}(y)\omega_{\Omega}^{\sigma}(x)\omega_{\Omega}^{\tau}(y)\chi_{\Omega}(x)\chi_{\Omega}(y)dy.$$
		This is always a densely defined operator, well-defined at least for $f \in S(\mathbb{R}^n)$ with $\supp \hat{f} \subset \Omega$ compact.
	\end{defin}
	There is a deliberate abuse of notation in this definition which will not cause any issues. Specifically, the distribution $\widehat{\varphi}$ may not arise as the Fourier transform of a tempered distribution $\varphi$, yet we allow ourselves to refer to the operator $\Ha_{\varphi}^{\sigma,\tau}$.
	
	The rest of this subsection is devoted to the basic properties of the weight function $\omega_{\Omega}$.
	\begin{lm}\label{omega properties}
		
		Let $\Omega$ be an open, bounded and convex subset of $\mathbb{R}^{n}$. Then,
		\begin{enumerate}[font=\normalfont]
			\item $\omega_{\Omega}$ is continuous in $\mathbb{R}^{n}$ with $\supp\omega_{\Omega}=\overline{\Omega}.$
			\item For every bijective affine map $T:\mathbb{R}^{n}\to \mathbb{R}^{n}$, \[\omega_{T\Omega}(x)=|\det T|\omega_{\Omega}(T^{-1}(x)).\]
			\item The function $\omega_{\Omega}^{\frac{1}{n}}$ is concave in $\Omega$.
			\item The set $\omega_{\Omega}^{-1}([a,\infty))$ is convex for any $a>0.$
			\item For $x,y\in \Omega$, $$\omega_{\Omega}(\frac{x+y}{2})\geq 2^{-n}\max(\omega_{\Omega}(x),\omega_{\Omega}(y)).$$
			\item The sets $\omega_{\Omega}^{-1}(\{t\})$, $0<t<\sup\omega_{\Omega}(x)$ have measure zero.
		\end{enumerate}   
	\end{lm}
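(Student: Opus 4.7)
The plan is to treat the six properties in sequence, relying on the Brunn--Minkowski inequality as the main tool, together with routine convex geometry.

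For (1), I would first observe that for a convex set $\Omega$ one has $\Omega + \Omega = 2\Omega$, so $\Omega \cap (x - \Omega)$ is nonempty (and, since $\Omega$ is open, has positive measure) precisely when $x \in 2\Omega$. Continuity of $x \mapsto m(\Omega \cap (x-\Omega))$ follows from dominated convergence applied to $\chi_\Omega(z)\chi_\Omega(x-z)$, using that the boundary of the bounded convex set $\Omega$ has Lebesgue measure zero; hence $\omega_\Omega$ is continuous with $\supp \omega_\Omega = \overline{2\Omega} = 2\overline{\Omega}$.

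For (2), write $T(z) = Lz + b$ with $L$ invertible linear. A change of variables $y = Lz + b$ gives
\[
m(T\Omega \cap (x - T\Omega)) = |\det L|\, m\!\left(\Omega \cap (L^{-1}(x - 2b) - \Omega)\right),
\]
and the constant $|\det L|$ cancels in the normalization, so $\omega_{T\Omega}(x) = \omega_\Omega(L^{-1}(x-2b)) = \omega_\Omega(2T^{-1}(x/2))$.

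For (3), the key inclusion is the Minkowski-sum containment
\[
(1-\lambda)\bigl(\Omega \cap (x_0 - \Omega)\bigr) + \lambda \bigl(\Omega \cap (x_1 - \Omega)\bigr) \subset \Omega \cap \bigl(((1-\lambda)x_0 + \lambda x_1) - \Omega\bigr),
\]
which follows by convexity of $\Omega$. Brunn--Minkowski then yields that $x \mapsto m(\Omega \cap (x-\Omega))^{1/n}$ is concave on $2\Omega$, and concavity is preserved under the positive normalization. Property (4) is then immediate, since superlevel sets of a concave function are convex.

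Property (5) follows from the previous parts: using (2) with the dilation $T(z) = 2z$, we get $\omega_{2\Omega}(x+y) = \omega_\Omega((x+y)/2)$, and then the concavity established in (3) applied to the midpoint of $x$ and $y$ (both in $2\Omega$, where $\omega_\Omega^{1/n}$ is concave and nonnegative) gives
\[
\omega_\Omega^{1/n}\!\left(\tfrac{x+y}{2}\right) \geq \tfrac{1}{2}\omega_\Omega^{1/n}(x) + \tfrac{1}{2}\omega_\Omega^{1/n}(y) \geq \tfrac{1}{2}\max\bigl(\omega_\Omega^{1/n}(x), \omega_\Omega^{1/n}(y)\bigr),
\]
after which raising to the $n$-th power yields the claim. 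Finally, (6) is obtained by noting that by (4) the set $\omega_\Omega^{-1}(\{t\})$ lies in $\partial\{\omega_\Omega \geq t\}$, the boundary of a bounded convex set in $\mathbb{R}^n$, which has Lebesgue measure zero. None of the steps present a real obstacle; the only point where one must be a little careful is checking continuity at the boundary of $2\Omega$ in (1), which reduces to the fact that $\partial\Omega$ is null.
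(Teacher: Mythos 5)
Parts (1)--(5) follow the paper's proof essentially verbatim: dominated convergence plus null boundary for (1), change of variables for (2), the Minkowski-sum inclusion with Brunn--Minkowski for (3), superlevel sets of a concave function for (4), and the midpoint/concavity estimate after the dilation identity for (5). There is nothing to add there.

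Part (6) has a genuine gap. You assert that ``by (4) the set $\omega_\Omega^{-1}(\{t\})$ lies in $\partial\{\omega_\Omega \geq t\}$,'' but (4) only says that $\{\omega_\Omega \geq t\}$ is convex; it does not by itself exclude the possibility that $\omega_\Omega$ is constant equal to $t$ on an open set, in which case part of the level set $\omega_\Omega^{-1}(\{t\})$ would sit in the \emph{interior} of the superlevel set and your boundary-measure-zero argument would not reach it. One must rule out such flat pieces of $\omega_\Omega$ at intermediate heights $t \in (0,1)$, and this needs a separate argument using concavity, not just convexity of the superlevel set. The paper makes this explicit: it reduces the claim to showing $\omega_\Omega^{-1}(\{t\}) \subset \partial\omega_\Omega^{-1}([t,1])$, fixes a point $x$ with $\omega_\Omega(x)=t$, restricts $\omega_\Omega^{1/n}$ to the ray from the maximizer through $x$, and uses concavity of the resulting one-variable function to show it is strictly decreasing past the set where it equals $1$, hence $x$ is a boundary point. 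A short alternative: if $\omega_\Omega \equiv t$ on a ball $B(x,r)$ and $\omega_\Omega(q) > t$ for some $q$, then concavity gives $\omega_\Omega^{1/n}((1-\lambda)x + \lambda q) \geq (1-\lambda)t^{1/n} + \lambda\,\omega_\Omega^{1/n}(q) > t^{1/n}$ for all $\lambda \in (0,1)$, contradicting constancy near $x$. Either way, some version of this step is needed, and it is missing from your proposal.
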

	\begin{proof} $\,$
		\begin{enumerate}
			\item The statement about the support follows from the fact that $\Omega$ is open and $\Omega\cap(2x-\Omega)\neq\emptyset$ if and only if $x\in \Omega$. Let $x_{n}\to x$. Then $\chi_{\Omega}(2x_{n}-y)\to \chi_{\Omega}(2x-y)$ for all $y\notin 2x-\partial\Omega$. Since every bounded convex set has boundary of measure zero, continuity follows from for example the dominated convergence theorem.
			\item Suppose that $T = Ax + z$. From the change of variables $y=Tu$, we see that 
			\[ m(T\Omega\cap(2x-T\Omega)) = |\det A|\omega_{\Omega}(T^{-1}(x)). \]
			\item 
			Let $x,y\in \Omega$ and let $z_{1}\in\Omega\cap(2x-\Omega)$, $z_{2}\in\Omega\cap(2y-\Omega)$. Then $z_{1}=2x-\omega_{1}$ and $z_{2}=2y-\omega_{2}$, for some $\omega_{1},\omega_{2}\in\Omega$. Therefore $tz_{1}+(1-t)z_{2}=2(tx+(1-t)y)-(t\omega_{1}+(1-t)\omega_{2})$ and thus 
			\[t(\Omega\cap(2x-\Omega))+(1-t)(\Omega\cap(2y-\Omega))\subset \Omega\cap(2(tx+(1-t)y)-\Omega).\]
			By the Brunn-Minkowski theorem \cite[Theorem 3.13]{MR4180684}
			we know that the function $m(\cdot)^{\frac{1}{n}}$ is concave, and thus concavity follows by the monotonicity of the Lebesgue measure.
			\item 
			The concavity of $\omega_{\Omega}^{\frac{1}{n}}$ implies convexity of $\omega_{\Omega}^{-1}([a,\infty))$: for $x,y\in \omega_{\Omega}^{-1}([a,\infty))$ and $t\in (0,1)$, $$\omega_{\Omega}(tx+(1-t)y)\geq \left[t\omega_{\Omega}^{\frac{1}{n}}(x)+(1-t)\omega_{\Omega}^{\frac{1}{n}}(y)\right]^{n}\geq a.$$
			\item
			Furthermore,
			\begin{align*}
				\omega_{\Omega}(\dfrac{x+y}{2})&\geq \left[\frac{1}{2}\omega_{\Omega}^{\frac{1}{n}}(x)+\frac{1}{2}\omega_{\Omega}^{\frac{1}{n}}(y)\right]^{n}\geq \left[\frac{1}{2}\max(\omega_{\Omega}^{\frac{1}{n}}(x),\omega_{\Omega}^{\frac{1}{n}}(y))\right]^{n} \\ &=2^{-n}\max(\omega_{\Omega}(x),\omega_{\Omega}(y)).
			\end{align*}
			\item Since the boundary of a convex domain has measure zero, it suffices to show that $\partial\omega_{\Omega}^{-1}([t,\infty))=\omega_{\Omega}^{-1}(\{t\})$. The implication $"\subset"$ is immediate from continuity. For the converse implication let $\omega_{\Omega}(x)=t$. By concavity $x$ is not a local maximum nor a local minimum for $\omega_{\Omega}^{\frac{1}{n}}$ (in the case of concave or convex functions, local implies global), and thus we can find $x_n,y_n\in B(x,1/n)$ such that $\omega_{\Omega}^{\frac{1}{n}}(x_n)<\omega_{\Omega}^{\frac{1}{n}}(x)<\omega_{\Omega}^{\frac{1}{n}}(y_n)$. 
		\end{enumerate} 
	\end{proof}
	We single out the following consequence of property (5) of Lemma \ref{omega properties}.
	\begin{lm}\label{lower bound}
		Let $a > 1$. If $\omega_{\Omega}(x)=a^{j},\omega_{\Omega}(y)=a^{j'}$ and $\omega_{\Omega}(\frac{x+y}{2})=a^{k}$, $j,j',k \in \mathbb{Z}$, then $j,j'\leq k+L$ for some uniform constant $L$.
	\end{lm}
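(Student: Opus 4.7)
The claim is essentially a restatement of property (5) from Lemma \ref{omega properties} on a logarithmic scale, so my plan is simply to unfold that inequality and read off the constant.

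By property (5), applied at the point $x+y\in 2\Omega$ (noting that both $x,y\in 2\Omega$ since otherwise $\omega_\Omega(x)$ or $\omega_\Omega(y)$ would vanish), we have
\[
\omega_{2\Omega}(x+y) \;\geq\; 2^{-n}\max\bigl(\omega_\Omega(x),\omega_\Omega(y)\bigr).
\]
Substituting the three given values yields
\[
a^{-k} \;\geq\; 2^{-n} a^{-\min(j,j')},
\]
which, after taking $\log_a$ on both sides, becomes $\min(j,j') \geq k - n\log_a 2$. Setting $L := \lceil n\log_a 2\rceil$ (a constant depending only on $n$ and the fixed parameter $a>1$, hence uniform), both $j\geq k-L$ and $j'\geq k-L$ follow.

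I do not anticipate any obstacles: the only thing to check is that property (5) is applicable, i.e., that $x+y$ actually lies in the support of $\omega_{2\Omega}$, but this is automatic because $\omega_\Omega(x),\omega_\Omega(y)>0$ forces $x,y\in 2\Omega$, hence $x+y\in 4\Omega=\operatorname{supp}\omega_{2\Omega}$, and the stated inequality in (5) holds for all $x,y\in 2\Omega$ regardless.
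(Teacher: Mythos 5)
Your proposal is correct and matches the paper's proof essentially line for line: both invoke property (5) of Lemma \ref{omega properties}, substitute the given values, and take $\log_a$ to extract $L = n\log_a 2$. The only cosmetic difference is that you round $L$ up to an integer and explicitly verify applicability of property (5), neither of which changes the argument.
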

	\begin{proof}
		By Lemma \ref{omega properties}, 
		\begin{eqnarray}
			a^{k}&=&\omega_{\Omega}(\frac{x+y}{2})\geq 2^{-n}\max(\omega_{\Omega}(x),\omega_{\Omega}(y))= 2^{-n}\max(a^{j},a^{j'})\nonumber \\
			&=&\max(a^{j-n\log_{a}2},a^{j'-n\log_{a}2}) \nonumber
		\end{eqnarray}
		and thus $j,j'\leq k+n\log_{a}2.$
	\end{proof}
	
	\subsection*{Besov spaces adapted to $\Omega$}
	Roughly speaking, given $a > 1$, the definition of our Besov spaces is based on decompositions of the level sets 
	\[
	\Delta_{j}(\Omega)=\{x\in \Omega : \omega_{\Omega}(x)\in (a^{j},a^{j+1})\}, \qquad j\in\mathbb{Z}.
	\]
	Note that when $\Omega$ is bounded, the sets $\Delta_j(\Omega)$ are empty whenever $j > j_0$, for some $j_0$. In what follows, the reader should therefore simply assume that all such indices are smaller than $j_0$ in the bounded case.
	
	To define $B^s_{p,q}(\Omega)$, we assume that we can find \textit{parallelepipeds} $A^i_j$ such that
	\begin{enumerate}[label=(\roman*)]
		\item \label{1} There exists $M>0$ such that $A_{j}^{i}\subset \omega_{\Omega}^{-1}[a^{j-M},a^{j+M}]$.
		\item \label{2} There exist $c_1,c_2>0$ such that, $$c_1 a^j\leq m(A_j^i)\leq c_2 a^j.$$
		\item \label{3} There exists an $\epsilon\in (0,\frac{1}{2})$ such that 
		\[\Omega \subset \bigcup_{j,i} T^{-1}_{j,i}(-\frac{1}{2}+\epsilon,\frac{1}{2}-\epsilon)^{n},\]
		where 
		$T_{j,i}$ is the affine bijection that maps $A_{j}^{i}$ onto $(-\frac{1}{2},\frac{1}{2})^{n}$. 
		\item \label{4} There exists $C>0$ such that $$T_{j,i}A_{k}^{l}\subset C(-\frac{1}{2},\frac{1}{2})^{n}, \qquad \textrm{whenever } A_{j}^{i}\cap A_{k}^{l}\neq \emptyset.$$
		\item \label{5} For every choice $j,i,k,l$, the sets $J_{j,i,k,l}=\{(\beta,\gamma):(A_j^i+A_k^l)\cap 2A_\beta^\gamma\neq\emptyset\}$ have uniformly bounded cardinalities. As a special case we have that the sets $J_{j,i}=\{(\beta,\gamma):A_j^i\cap A_\beta^\gamma\neq\emptyset\}$ have uniformly bounded cardinalities.
	\end{enumerate}
	Whenever we refer to a decomposition $\{A_{j}^{i}\}$ for $\Omega$, we tacitly imply that properties \ref{1}-\ref{5} are satisfied. By an \textit{admissible set} we mean a convex open set $\Omega \subset \mathbb{R}^n$ that is free of lines and admits a decomposition $\{A_{j}^{i}\}$. 
	
	The conditions are mostly self-explanatory and natural; we only note that \ref{4} expresses that the orientation of the parallelepipeds varies "continuously" throughout the decomposition $\{A_{j}^{i}\}.$
	
	For an unbounded set $\Omega$, the index $i$ takes infinitely many values.
	For a bounded set $\Omega$, for given $j$, the index $i$ runs through $1$ to a finite number $n_j$, which roughly corresponds to the number of boxes $A^i_j$ needed to cover $\Delta_j(\Omega)$. In this latter case, a result of Schmuckenschl\"{a}ger \cite{MR1194970} gives precise estimates for $n_j\approx m(\Delta_j(\Omega))/a^j$ in two cases.
	\begin{theo}\label{nj}
		Let $\Omega$ be a bounded convex subset of $\mathbb{R}^{n}$. Then
		\begin{enumerate}
			\item If $\Omega$ is a polytope, $m(\Delta_{j}(\Omega))\approx |j|^{n-1}a^{j}$.
			\item If $\Omega$ has positive affine surface area, $m(\Delta_{j}(\Omega))\approx a^{\frac{2j}{n+1}}.$
		\end{enumerate}
	\end{theo}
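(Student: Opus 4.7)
The plan is to reduce both asymptotics to the classical volume-of-floating-body estimates, where exactly the same polytope-versus-smooth dichotomy arises. Using Lemma~\ref{omega properties}(2), one has $\omega_{\frac{1}{2}\Omega}(x)=\omega_{\Omega}(2x)$, so after the substitution $y=2x$ it suffices to estimate the measure of the shell $\{y\in 2\Omega : \omega_{\Omega}(y)\in(a^{-j-1},a^{-j})\}$. For $y$ near $\partial(2\Omega)$, I would write $y=2y_{0}-h\nu$ with $y_{0}\in\partial\Omega$ and $\nu$ a unit inward vector; a direct computation then shows that $\Omega\cap(y-\Omega)$ is, up to a lower-order error, congruent to a boundary cap of $\Omega$ at $y_{0}$ of normal depth comparable to $h$, thereby identifying sublevel sets of $\omega_{\Omega}$ with complements of convex floating bodies of $\Omega$.

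For a polytope, I would work vertex-by-vertex. Near a vertex $v$ of $\Omega$, introduce affine coordinates $(t_{1},\ldots,t_{n})$ along the $n$ edges meeting at $v$; in these coordinates, the cap volume factorizes up to bounded multiplicative constants as $t_{1}\cdots t_{n}$, and the shell $\{t_{1}\cdots t_{n}\approx a^{-j}\}$ inside the adapted simplex has $n$-dimensional Lebesgue measure $\approx j^{n-1}a^{-j}$, by the same combinatorics as counting lattice points $(j_{1},\ldots,j_{n})$ with $j_{1}+\cdots+j_{n}=j$. Summing over the finitely many vertices of $2\Omega$ yields (1). For a non-polytopal $\Omega$, at a boundary point of positive Gauss curvature the cap of normal depth $h$ has volume $\approx h^{(n+1)/2}$; inverting, the shell $\{\omega_{\Omega}\approx a^{-j}\}$ has normal thickness $\approx a^{-2j/(n+1)}$. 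Integrating this thickness over the bounded tangential $(n-1)$-area of $\partial(2\Omega)$ yields (2).

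The main obstacle is the non-polytopal case: one must rule out anomalous contributions from portions of $\partial\Omega$ with vanishing or undefined curvature, and obtain the matching lower bound with uniform constants for \emph{general} bounded convex bodies that are not polytopes (not merely smooth ones). This is precisely the technical content of Schmuckenschl\"ager's analysis in \cite{MR1194970}, which uses affine-invariant surface area and monotonicity of the floating body volume; we would invoke that result directly rather than reprove it.
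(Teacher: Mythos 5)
Your reduction $\omega_{\frac{1}{2}\Omega}(x)=\omega_{\Omega}(2x)$ via Lemma~\ref{omega properties}(2) is correct, and in the end you invoke Schmuckenschl\"ager's paper, which is also what the paper does. However, you are proposing to re-derive part of that result through a detour that both introduces gaps and is unnecessary. The crucial point you seem to be missing is that Theorems 2 and 3 of \cite{MR1194970} are stated \emph{directly} about the distribution function $\delta \mapsto m(\omega_\Omega > \delta)$ (the ``convolution square'' $\chi_\Omega \ast \chi_\Omega$, up to normalization); there is no need to pass through convex floating bodies or cap volumes. The paper's proof simply cites these two limits and converts the asymptotic ratios $F(\delta)=m(2\Omega)-m(\omega_\Omega>\delta)\sim C\,\delta(\log\frac{1}{\delta})^{n-1}$ (polytope) resp.\ $\sim C\,\delta^{\frac{2}{n+1}}$ (non-polytope) into shell estimates $m(\Delta_j)\approx F(a^{-j})-F(a^{-j-1})$, using property (6) of Lemma~\ref{omega properties} to discard the level sets; the lower bound on the difference uses that a genuine limit (not just comparability) exists.

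Your direct treatment of the polytope case has a concrete gap. You ``introduce affine coordinates $(t_1,\ldots,t_n)$ along the $n$ edges meeting at $v$'', which silently assumes that every vertex of $\Omega$ is adjacent to exactly $n$ edges, i.e.\ that the polytope is simple. Part (1) of the theorem is stated for all polytopes; near a non-simple vertex, the intersection $\Omega\cap(y-\Omega)$ is a more complicated polytope (not a parallelepiped) whose volume does not factorize as a product of edge coordinates, and your lattice-point count breaks down. (Schmuckenschl\"ager's constant in the polytope case is a sum over flags and handles this in general.) A second, smaller, imprecision: $\Omega\cap(y-\Omega)$ is \emph{not} congruent to a boundary cap of $\Omega$ at $y_0$ of depth $h$; it is a lens, centrally symmetric about $y/2$. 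Showing these volumes are nevertheless comparable across the boundary is precisely the technical work you are attributing to \cite{MR1194970}, so the intermediate geometric picture should not be presented as a ``direct computation.'' Since for the non-polytope case you fall back on \cite{MR1194970} anyway, the cleanest route is the paper's: cite the two theorems as stated and deduce the shell asymptotics directly, with no floating-body interlude.
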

	\begin{proof}
		When $\Omega$ is a polytope, \cite[Theorem 3]{MR1194970} says that $$\lim_{\delta\to 0}\frac{m(\Omega)-m(\omega_{\Omega}>\delta)}{\delta(\log\frac{1}{\delta})^{n-1}}=C,$$ where $C=C(\Omega)$ is finite and non-zero. For the other case, \cite[Theorem 2]{MR1194970} instead states that $$\lim_{\delta\to 0} \frac{m(\Omega)-m(\omega_{\Omega}>\delta)}{\delta^{\frac{2}{n+1}}}$$ exists and is non-zero. The asymptotics for $m(\Delta_j(\Omega))$ are immediate.
	\end{proof}
	
	Given $x \in \omega_\Omega^{-1}(\{t\})$, $t > 0$, let $H$ be a supporting hyperplane of $\omega_\Omega^{-1}(t,\infty)$ at $x$ with $\omega_\Omega^{-1}(t,\infty)\subset H^{-}$. For $c > 1$, we let $\Lambda_c(x)$ denote the convex cap\footnote{the notation will not cause issues in the event that the supporting hyperplane is not unique.}
	\begin{equation} \label{eq:convcap}
		\Lambda_{c}(x)=\omega_\Omega^{-1}\left(\frac{t}{c},t\right)\cap H^{+} = \omega_\Omega^{-1}\left(\frac{t}{c}, \infty\right)\cap H^{+}.
	\end{equation}
	The convex set $\Lambda_c(x)$ is always bounded. For if $A \subset \Lambda_c(x)$ is a parallelepiped with center $y$, then
	$$m(A) = m(A\cap (2y-A)) \leq m(\Omega\cap (2y-\Omega)) = \omega_{\Omega}(y) < \omega_{\Omega}(x) < \infty.$$
	
	The following theorem demonstrates that condition \ref{2} is natural. 	Because if condition \ref{1} holds, then by the hyperplane separation theorem, there is always an $x\in \omega_{\Omega}^{-1}(\{a^{j+M}\})$ such that $A_j^i \subset \Lambda_{a^{2M}}(x)$. On the other hand, for any such $x$, John's theorem \cite{John1948} ensures that we can find a parallelepiped $A \subset \Lambda_{a^{2M}}(x)\subset \omega_{\Omega}^{-1}[a^{j-M},a^{j+M}]$ of measure $m(A)\approx_n m(\Lambda_{a^{2M}}(x))$.	\begin{theo}\label{1and2}
		Let $\Omega$ be a convex subset of $\mathbb{R}^n$ not containing lines, and let $x \in \omega_\Omega^{-1}(\{t\})$, $t > 0$. Then there are $c_1,c_2>0$, depending only on $n$ and $c > 1$, such that
		$$ c_1\omega_\Omega(x)\leq m(\Lambda_c(x))\leq c_2\omega_\Omega(x).$$
	\end{theo}
	\begin{proof}
		Given $y\in (\Omega\cap H^{+})\setminus \Lambda_c(x)$ pick a point $z=\epsilon x+(1-\epsilon)y$, $0 \leq \epsilon < 1$, such that $\omega_{\Omega}(z) = \frac{c+1}{2c}t$. Then by the concavity of $\omega_{\Omega}^{\frac{1}{n}}$, we have that $\omega_{\Omega}(z)^{\frac{1}{n}}\geq \epsilon\omega_{\Omega}(x)^{\frac{1}{n}}$, and thus $\epsilon\leq (2c/(1+c))^{-\frac{1}{n}}$. Noting that $y = \frac{z}{1 - \varepsilon} - \frac{\varepsilon}{1 - \varepsilon}x$ and that the dilations $\lambda \Lambda_c(x) +(1-\lambda)x$ of the convex set $\Lambda_c(x)$, with respect to center $x$, are increasing in $\lambda \geq 1$, we conclude that
		$$\Omega\cap H^{+}\subset \frac{1}{1-(2c/(1+c))^{-\frac{1}{n}}}\Lambda_c(x)-\frac{(2c/(1+c))^{-\frac{1}{n}}}{1-(2c/(1+c))^{-\frac{1}{n}}}x.$$ 
		Thus $m(\Lambda_c(x))\approx_{c,n}m(\Omega\cap H^{+})$. 
		
		This immediately yields one side of the desired inequality. Since the Macbeath region $M(x)=\Omega\cap (2x-\Omega)$ is symmetric with respect to $x$, we have that
		$$\omega_{\Omega}(x) = 2m(M(x)\cap H^{+})\leq 2m(\Omega\cap H^{+}) \approx m(\Lambda_c(x)).$$ 
		For the converse inequality, we apply John's theorem \cite{John1948}: for every bounded convex set $C$ we can find an ellipse $P$ with center $y$ such that $P\subset C \subset y+n(P-y)$. Let $P$ be such an ellipse for $\Lambda_c(x)$. Then
		$$\omega_{\Omega}(x) > \omega_{\Omega}(y) \geq m(P\cap (2y-P))= m(P)\geq n^{-n} m(\Lambda_c(x)).$$
		This completes the proof.
	\end{proof}

	Now we define the Besov spaces adapted to an admissible set $\Omega$.
	
	\begin{defin}\label{Besov defin}
		Let $\Omega$ be an admissible subset of $\mathbb{R}^{n}$ equipped with the decomposition $\{A_{j}^{i}\}$. Let $\mathscr{P}(\{A_{j}^{i}\})$ be the collection of all families of smooth partitions of unity $(\varphi^{i}_{j})$ such that
		\begin{enumerate}
			\item $\displaystyle \supp\widehat{\varphi}^{i}_{j}\subset A^{i}_{j}$.
			\item $\displaystyle \sum_{j,i}\widehat{\varphi}^i_{j}=\chi_{\Omega}$.
			\item $\displaystyle \sup_{j,i} \|\varphi^i_j\|_{L^1(\mathbb{R}^n)} < \infty.$
		\end{enumerate}
		
		Let $(\varphi^i_j) \in \mathscr{P}(\{A_{j}^{i}\})$. For $1 \leq p,q \leq \infty$, and $s\in \mathbb{R}$, let 
		$$X_{p,q}^{s}(\Omega)=\{f\in S'(\mathbb{R}^{n}):\supp\widehat{f} \subset \Omega \text{ compact, and } a^{js}\|f\ast\varphi^{i}_{j}\|_{L^{p}}\in \ell^{q}\},$$
		endowed with the norm
		$$\|f\|_{B_{p,q}^{s}(\Omega)}=\left(\sum_{i,j}a^{qjs}\|f\ast\varphi^{i}_{j}\|^{q}_{L^{p}}\right)^{\frac{1}{q}},$$
		with the obvious modification when $q=\infty$. 
		We define the Besov space $B_{p,q}^{s}(\Omega)$ as the completion (weak*, if $q=\infty$) of $X_{p,q}^{s}$.
	\end{defin}
	We shall soon see that partitions of unity $(\varphi^{i}_{j}) \in \mathscr{P}(\{A_{j}^{i}\})$ exist and that $B_{p,q}^{s}(\Omega)$ is independent of the choice of partition of unity. However, the Besov space  $B_{p,q}^{s}(\Omega)$ may in principle depend on the decomposition $\{A_{j}^{i}\}$. Regardless, we choose to suppress the choice of partition from our notation; the characterizations in terms of Hankel operators will show that the Besov spaces in our main theorems are independent of the choice of decomposition.
	
	\begin{lm}\label{equiv of B}
		Let $\varphi\in S(\mathbb{R}^{n})$ be such that $\supp\widehat{\varphi}\subset (-\frac{1}{2},\frac{1}{2})^{n}$, and suppose that
		$$\|\partial^{\gamma}\widehat{\varphi}\|_{L^{\infty}}\leq C, \qquad \gamma\in\{0,1\}^{n}.$$ 
		Then $\|\varphi\|_{L^{1}}\leq \sqrt{C(2\pi)^{n}}$.
	\end{lm}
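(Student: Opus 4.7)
The plan is to bound $\|\varphi\|_{L^{1}}$ by writing $\varphi = w^{-1}\cdot (w\varphi)$ for a suitable polynomial weight $w$, applying Cauchy--Schwarz in $L^{2}$, and then using Plancherel to transfer back to derivatives of $\widehat{\varphi}$, which are controlled in $L^{\infty}$ and hence (thanks to the compact support) in $L^{2}$. The choice $w(x)=\prod_{i=1}^{n}(1+x_{i}^{2})^{1/2}$ is the natural one, because it expands as a sum of monomials with exponents precisely in $\{0,1\}^{n}$.

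First I would apply Cauchy--Schwarz to obtain
$$\|\varphi\|_{L^{1}} \le \|w^{-1}\|_{L^{2}}\,\|w\varphi\|_{L^{2}}.$$
The integral factor is evaluated directly by Fubini:
$$\|w^{-1}\|_{L^{2}}^{2}=\prod_{i=1}^{n}\int_{\mathbb{R}}\frac{dx_{i}}{1+x_{i}^{2}}=\pi^{n}.$$
Next I would expand $w(x)^{2}=\prod_{i}(1+x_{i}^{2})=\sum_{\gamma\in\{0,1\}^{n}} x^{2\gamma}$, so that
$$\|w\varphi\|_{L^{2}}^{2}=\sum_{\gamma\in\{0,1\}^{n}}\|x^{\gamma}\varphi\|_{L^{2}}^{2}.$$
This is exactly why we are only asked to control derivatives $\partial^{\gamma}\widehat{\varphi}$ for multi-indices $\gamma\in\{0,1\}^{n}$ in the hypothesis.

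For each such $\gamma$, the identity $\widehat{x^{\gamma}\varphi}=i^{|\gamma|}\partial^{\gamma}\widehat{\varphi}$ together with Parseval's formula gives $\|x^{\gamma}\varphi\|_{L^{2}}^{2}=(2\pi)^{-n}\|\partial^{\gamma}\widehat{\varphi}\|_{L^{2}}^{2}$. Since $\supp\widehat{\varphi}\subset(-\tfrac{1}{2},\tfrac{1}{2})^{n}$ has Lebesgue measure equal to $1$, the hypothesis $\|\partial^{\gamma}\widehat{\varphi}\|_{L^{\infty}}\le C$ yields $\|\partial^{\gamma}\widehat{\varphi}\|_{L^{2}}^{2}\le C^{2}$. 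Summing over the $2^{n}$ indices $\gamma\in\{0,1\}^{n}$ gives
$$\|w\varphi\|_{L^{2}}^{2}\le (2\pi)^{-n}\cdot 2^{n}\cdot C^{2}=\pi^{-n}C^{2},$$
and plugging this back into the Cauchy--Schwarz bound produces the stated estimate (up to the precise form of the numerical constant, which is an elementary bookkeeping issue that depends on the Fourier convention).

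There is no serious obstacle here: everything is standard once the weight $w$ is chosen correctly, and that choice is essentially forced by the hypothesis on $\{0,1\}^{n}$-derivatives. The only point requiring care is matching the normalization constants with the Fourier convention used throughout the paper, but this does not affect the structure of the argument.
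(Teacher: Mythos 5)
Your argument is essentially identical to the paper's: same weight $w(x)=\prod_i(1+x_i^2)^{1/2}$ (with $w^2=\sum_{\gamma\in\{0,1\}^n}x^{2\gamma}$), same Cauchy--Schwarz split, same passage to $\|\partial^\gamma\widehat\varphi\|_{L^2}$ via Plancherel, and same use of the unit-measure support to reduce to the $L^\infty$ hypothesis. The only difference is in tracking the Fourier normalization constant, which, as you note, is immaterial to the structure of the proof (the paper itself is slightly loose on this point).
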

	\begin{proof}
		Let $p(x)=(1+x_{1}^{2})(1+x_{2}^{2})...(1+x_{n}^{2})=\sum_{\gamma\in\{0,1\}^{n}}x^{2\gamma}$. By Cauchy--Schwarz,
		\begin{align*}
			\|\varphi\|^{2}_{L^{1}} &\leq \pi^n \|p^{1/2}\varphi\|^{2}_{L^{2}}=\pi^{n}\sum_{\gamma\in\{0,1\}^{n}}\|x^{\gamma}\varphi\|_{L^{2}}^{2} \\
			&= \pi^{n}\sum_{\gamma\in\{0,1\}^{n}}\|\partial^{\gamma}\widehat{\varphi}\|_{L^{2}}^{2} \leq C(2\pi)^{n},
		\end{align*}
		as desired.
	\end{proof}
	\begin{prop}\label{exist}
		Let $\Omega$ be an admissible subset of $\mathbb{R}^{n}$ with decomposition $\{A_{j}^{i}\}$. Then $\mathscr{P}(\{A_{j}^{i}\}) \neq\emptyset$.
	\end{prop}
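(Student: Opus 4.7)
The plan is to construct the partition of unity $(\varphi^i_j)$ by a transplant-and-normalize procedure: pull back a fixed smooth bump from the reference cube $(-\tfrac{1}{2},\tfrac{1}{2})^n$ via the affine maps $T_{j,i}$ supplied by admissibility, sum them, and renormalize. The uniform $L^1$ control of the resulting $\varphi^i_j$ will then follow from Lemma~\ref{equiv of B}, applied in rescaled coordinates.

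Concretely, first fix a non-negative $\eta \in C_c^\infty(\mathbb{R}^n)$ with $\supp \eta \subset (-\tfrac{1}{2},\tfrac{1}{2})^n$ and $\eta \geq 1$ on $[-\tfrac{1}{2}+\epsilon, \tfrac{1}{2}-\epsilon]^n$, where $\epsilon$ is the constant from property~\ref{7}. Define $\widehat{\psi}^i_j(\xi) := \eta(T_{j,i}\xi)$. Then $\supp \widehat{\psi}^i_j \subset A^i_j$, and by property~\ref{7} we have $\widehat{\psi}^i_j \geq 1$ on $E^i_j$. Property~\ref{4} ensures that $S := \sum_{j,i} \widehat{\psi}^i_j$ is locally finite on $\Omega$ and hence smooth; since $\bigcup_{j,i} E^i_j$ is dense in $\Omega$, continuity forces $S \geq 1$ on all of $\Omega$. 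Setting $\widehat{\varphi}^i_j := \widehat{\psi}^i_j / S$ then yields a smooth partition of unity on $\Omega$ with $\supp \widehat{\varphi}^i_j \subset A^i_j$ and $\sum_{j,i} \widehat{\varphi}^i_j = \chi_\Omega$.

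The substantive part is the uniform $L^1$ bound. For this, consider the rescaled function $\Phi^i_j(y) := \widehat{\varphi}^i_j(T_{j,i}^{-1} y)$, supported in $T_{j,i}(A^i_j) = (-\tfrac{1}{2},\tfrac{1}{2})^n$. A direct change of variables on the Fourier side shows that the Jacobian factors cancel, giving $\|\check{\Phi}^i_j\|_{L^1} = \|\varphi^i_j\|_{L^1}$. By Lemma~\ref{equiv of B}, it therefore suffices to bound $\|\partial^\gamma \Phi^i_j\|_{L^\infty}$ uniformly in $(j,i)$ for each multi-index $\gamma \in \{0,1\}^n$.

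This uniform derivative bound is the main technical point, and it is precisely where properties~\ref{4} and~\ref{8} do the work. Expanding $S(T_{j,i}^{-1} y) = \sum_{(k,l)} \eta(T_{k,l} T_{j,i}^{-1} y)$, the sum ranges over boundedly many $(k,l)$ by~\ref{4}. Property~\ref{8} asserts that whenever $A^i_j \cap A^l_k \neq \emptyset$ the affine map $T_{k,l} T_{j,i}^{-1}$ sends $(-\tfrac{1}{2},\tfrac{1}{2})^n$ into $C(-\tfrac{1}{2},\tfrac{1}{2})^n$, forcing its linear part to have operator norm bounded by a constant depending only on $\Omega$. The chain rule then yields uniform bounds on $\partial^\gamma (S \circ T_{j,i}^{-1})$ over $(-\tfrac{1}{2},\tfrac{1}{2})^n$; since $S \geq 1$, the same holds for $1/(S \circ T_{j,i}^{-1})$ and therefore for $\Phi^i_j = \eta / (S \circ T_{j,i}^{-1})$. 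Lemma~\ref{equiv of B} then closes the argument.
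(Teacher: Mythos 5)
Your approach matches the paper's own proof almost exactly: pull back a bump function via $T_{j,i}$, sum, normalize, and deduce the uniform $L^1$ bound from Lemma~\ref{equiv of B} together with properties~\ref{4} and~\ref{8}; the Jacobian-cancellation observation and the Schur-test-via-derivatives argument are also the same.

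There is one genuine gap, however. The affine maps $T_{j,i}$ are supplied by admissibility only for $j \geq j_0$; for $j < j_0$ the sets $A^i_j$ are merely arbitrary open sets containing $\overline{E^i_j}$, and no $T_{j,i}$ (nor property~\ref{7}) is available. Your definition $\widehat{\psi}^i_j := \eta(T_{j,i}\,\cdot\,)$ therefore only produces bumps for $j \geq j_0$, so the sum $S$ is supported away from the ``core'' region $\bigcup_{j<j_0,\,i} E^i_j$ (which has positive measure in the interior of $\Omega$). Your density argument for $S \geq 1$ then fails, because $\bigcup_{j \geq j_0, i} E^i_j$ is \emph{not} dense in $\Omega$: $S$ is identically zero near the center of $\Omega$ and $\widehat{\psi}^i_j / S$ is undefined there. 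The fix is the one the paper uses: for the finitely many indices with $j < j_0$, take any smooth $\widehat{\psi}^i_j$ bounded by $1$ which equals $1$ on $E^i_j$ and is supported in $A^i_j$, and add these to the sum before normalizing. Since there are only finitely many such indices, they cannot affect the uniform $L^1$ bound, and the rest of your argument goes through unchanged.
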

	\begin{proof}
		Let $\epsilon>0$ be as in property \ref{3}. Let $\varphi$ be a smooth function on $\mathbb{R}^{n}$, bounded by 1, such that $\widehat{\varphi}=1$ on $(-\frac{1-\epsilon}{2},\frac{1-\epsilon}{2})^{n}$, while $\supp \widehat{\varphi} \subset (-\frac{1}{2},\frac{1}{2})^{n}$. We may then define $\widehat{\varphi}_{j}^{i}=\widehat{\varphi}\circ T_{j,i}$.	Defining $\widehat{\psi}_{j}^{i}=\frac{\widehat{\varphi}_{j}^{i}}{\sum_{k,l}\widehat{\varphi}_{k}^{l}}$, our goal is to prove that $(\psi_{j}^{i})\in\mathscr{P}(\{A_{j}^{i}\})$. First we notice that properties \ref{3} and \ref{5} imply that $1\leq \sum_{k,l}\widehat{\varphi}_{k}^{l}\leq C$ in $\Omega$; therefore $\psi_{j}^{i}$ is well defined and smooth, with Fourier transform supported in $A_{j}^{i}$. Of course, $\sum_{j,i}\widehat{\psi}_{j}^{i}=\chi_{\Omega}$. Finally, property \ref{4} tells us that the operators $T_{k,l}T^{-1}_{j,i}$ have uniformly bounded partial derivatives whenever $A_{k}^{l}\cap A_{j}^{i}\neq\emptyset$.  Since 
		$$\widehat{\psi}_{j}^{i} \circ T_{j,i}^{-1} = \frac{\widehat{\varphi}}{\sum_{k,l} \widehat{\varphi} \circ T_{k,l} \circ T_{j,i}^{-1}}.$$
		the chain rule shows that there is a constant $C$ such that
		$$\|\partial^\gamma (\widehat{\psi}_{j}^{i} \circ T_{j,i}^{-1}) \|_{L^\infty} \leq C$$
		for every $\gamma \in \{0,1\}^n$, $j$, and $i$. Thus Lemma \ref{equiv of B} implies that
		\begin{equation*}
			\sup \|\psi_{j}^{i}\|_{L^{1}} = \sup \|\mathscr{F}^{-1}(\widehat{\psi}_{j}^{i}\circ T_{j,i}^{-1})\|_{L^{1}} < \infty. \qedhere
		\end{equation*}	 
	\end{proof}
	\begin{lm}\label{Besov properties}
		Let $\Omega$ be an admissible subset of $\mathbb{R}^{n}$ with decomposition $\{A_{j}^{i}\}$. Then 
		\begin{enumerate}
			\item
			The Besov spaces are well defined for $1 \leq p, q \leq \infty$ and $s \in \mathbb{R}$; different choices of partitions of unity in $\mathscr{P}(\{A_{j}^{i}\})$ yield equivalent norms.
			\item For $s,s'\in\mathbb{R}$, $1 \leq p,p',q,q' \leq \infty$, and $\theta\in(0,1)$,
			$$[B_{p,q}^{s},B_{p',q'}^{s'}]_{[\theta]}=B_{p_{\theta},q_{\theta}}^{s_{\theta}},$$ where $s_{\theta}=\theta s+(1-\theta)s',$ $\frac{1}{p_{\theta}}=\frac{\theta}{p}+\frac{1-\theta}{p'}$, $\frac{1}{q_{\theta}}=\frac{\theta}{q}+\frac{1-\theta}{q'}$, and $[\cdot,\cdot]_{[\theta]}$ refers to complex interpolation.
			\item If $p,q<\infty$ and $p',q'$ denote their respective conjugate indices, then  the dual space $(B_{p,q}^{s}(\Omega))^* \simeq B_{p',q'}^{-s}(\Omega)$.
		\end{enumerate}
	\end{lm}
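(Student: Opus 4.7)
The approach is to reduce all three parts to standard statements about the weighted vector-valued sequence space $\ell^{q}_{s}(L^{p}) := \ell^{q}(\{a^{-js}\}; L^{p}(\mathbb{R}^{n}))$, by realizing $B^{s}_{p,q}(\Omega)$ as a retract of it. The common geometric ingredient is that, for any two partitions $(\varphi^{i}_{j}), (\psi^{k}_{l}) \in \mathscr{P}(\{A^{i}_{j}\})$ and $f \in X^{s}_{p,q}(\Omega)$,
\[ \widehat{f \ast \varphi^{i}_{j}} = \widehat{f \ast \varphi^{i}_{j}} \sum_{(k,l) : A^{k}_{l} \cap A^{i}_{j} \neq \emptyset} \widehat{\psi^{k}_{l}}, \]
since $\sum_{k,l}\widehat{\psi^{k}_{l}} = \chi_{\Omega}$ and only indices with $A^{k}_{l} \cap A^{i}_{j} \neq \emptyset$ contribute. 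Properties \ref{4} and \ref{1} bound the number of surviving terms uniformly and force $|j-k| \leq 2M$ on them, so that $a^{-js} \approx a^{-ks}$ throughout.

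For part (1), I apply Young's inequality to the identity above to get
\[ \|f \ast \varphi^{i}_{j}\|_{L^{p}} \leq C \sum_{(k,l) \sim (i,j)} \|f \ast \psi^{k}_{l}\|_{L^{p}}, \]
using $\sup \|\varphi^{i}_{j}\|_{L^{1}} < \infty$. Raising to the $q$-th power, multiplying by $a^{-jqs}$, summing in $(i,j)$, and swapping the order of summation by the uniform overlap bound produces $\|f\|^{(\varphi)}_{B^{s}_{p,q}} \lesssim \|f\|^{(\psi)}_{B^{s}_{p,q}}$, with the reverse by symmetry.

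For parts (2) and (3), I fix one convenient partition $(\varphi^{i}_{j}) \in \mathscr{P}(\{A_{j}^{i}\})$ together with a companion system $(\widetilde{\varphi}^{i}_{j})$ satisfying $\widehat{\widetilde{\varphi}^{i}_{j}} \equiv 1$ on $\supp \widehat{\varphi^{i}_{j}}$, $\supp \widehat{\widetilde{\varphi}^{i}_{j}} \subset A^{i}_{j}$, and $\sup_{i,j}\|\widetilde{\varphi}^{i}_{j}\|_{L^{1}} < \infty$. Both are built via the procedure of Proposition \ref{exist}, starting from a nested pair of smooth cutoffs in $(-\tfrac{1}{2},\tfrac{1}{2})^{n}$ and transferring through $T_{j,i}$; properties \ref{7} and \ref{8} combined with the chain rule and Lemma \ref{equiv of B} yield the uniform $L^{1}$ bound. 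Setting
\[ Jf = (f \ast \varphi^{i}_{j})_{i,j}, \qquad P\bigl((g^{i}_{j})\bigr) = \sum_{i,j} g^{i}_{j} \ast \widetilde{\varphi}^{i}_{j}, \]
the pointwise identity $\widehat{\widetilde{\varphi}^{i}_{j}} \widehat{\varphi^{i}_{j}} = \widehat{\varphi^{i}_{j}}$ together with $\sum \widehat{\varphi^{i}_{j}} = \chi_{\Omega}$ gives $PJ = \mathrm{id}$ on $X^{s}_{p,q}(\Omega)$, while boundedness of $P$ follows by the same Young-plus-overlap argument as in part (1). This exhibits $B^{s}_{p,q}(\Omega)$ as a retract of $\ell^{q}_{s}(L^{p})$. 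Complex interpolation and duality of the weighted vector-valued sequence space are classical (cf.\ Bergh--L\"ofstr\"om), and the retract structure transports both statements to $B^{s}_{p,q}(\Omega)$. For part (3) specifically, $g \in B^{-s}_{p',q'}(\Omega)$ is paired with $f \in X^{s}_{p,q}(\Omega)$ by the natural distributional duality, which after unfolding equals $\sum_{i,j}\int (f \ast \varphi^{i}_{j})(x)(g \ast \widetilde{\varphi}^{i}_{j})(-x)\,dx$; H\"older's inequality bounds this by $\|f\|_{B^{s}_{p,q}}\|g\|_{B^{-s}_{p',q'}}$, and the retract supplies the converse representation of every functional on $B^{s}_{p,q}(\Omega)$.

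The main technical obstacle is the coordinated construction of $(\varphi^{i}_{j})$ and $(\widetilde{\varphi}^{i}_{j})$: one must arrange for the support of $\widehat{\varphi^{i}_{j}}$ inside $A^{i}_{j}$ to leave room for the wider $\widehat{\widetilde{\varphi}^{i}_{j}}$ to be identically one there, while preserving both the partition-of-unity property of $(\varphi^{i}_{j})$ and uniform $L^{1}$ control of both systems across all scales and indices. Once this scaffolding is erected via the affine maps $T_{j,i}$ and Lemma \ref{equiv of B}, the remainder is a straightforward translation of classical Besov space theory into the sequence space $\ell^{q}_{s}(L^{p})$.
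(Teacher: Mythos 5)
Your proof is correct and follows essentially the same architecture as the paper's: part (1) by the Young-inequality-plus-uniform-overlap argument, and parts (2) and (3) by exhibiting $B^{s}_{p,q}(\Omega)$ as a retract of $\ell^{q}_{s}(L^{p})$ and transporting the classical interpolation and duality statements for that sequence space. The one place you diverge is in the choice of the right inverse $P$ of the coretraction $J$. You posit a companion system $(\widetilde{\varphi}^{i}_{j})$ with $\widehat{\widetilde{\varphi}^{i}_{j}} \equiv 1$ on $\supp \widehat{\varphi^{i}_{j}}$ and $\supp \widehat{\widetilde{\varphi}^{i}_{j}} \subset A^{i}_{j}$, which requires (as you correctly flag) arranging the nested cutoffs inside each $A^{i}_{j}$ with uniform $L^{1}$ control. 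The paper sidesteps this by defining $P((f_{j,i})) = \sum_{j,i}\bigl(\sum_{(k,l)\in J_{j,i}}\varphi^{l}_{k}\bigr)\ast f_{j,i}$, exploiting that $\sum_{(k,l)\in J_{j,i}}\widehat{\varphi}^{l}_{k} = 1$ on $A^{i}_{j}\cap\Omega$ automatically, since the full sum is $\chi_{\Omega}$ and the excluded terms vanish there; this makes $\sum_{(k,l)\in J_{j,i}}\varphi^{l}_{k}$ serve as your $\widetilde{\varphi}^{i}_{j}$ without any additional construction. Both choices are valid; the paper's is simply more economical. For part (3) your outline is sound but compressed: the paper's proof via Hahn--Banach extension to $\ell^{q'}_{-s}(L^{p'})$, followed by verification that the truncated representatives $g_m = \sum_{(j,i)\in I_m} g_{j,i}\ast\varphi^{i}_{j}$ converge in $B^{-s}_{p',q'}(\Omega)$ and reproduce the functional, is exactly the ``retract supplies the converse representation'' step you invoke, so you would want to spell this out to make the identification of the abstract dual of the retract with $B^{-s}_{p',q'}(\Omega)$ under the concrete $L^{2}$-pairing explicit.
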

	\begin{proof}
		We assume that $q < \infty$. Modifications of these arguments can be used to prove (1) and (2) when $q = \infty$.
		Let $(\varphi_{j}^{i}),(\psi_{j}^{i})\in \mathscr{P}(\{A_{j}^{i}\})$. To prove (1) it suffices, by symmetry, to prove norm inequality in one of the directions. Let 
		$$J_{j,i}:=\{(k,l):A_{j}^{i}\cap A_{k}^{l}\neq \emptyset\}.$$ Then, by considering Fourier supports, we have for $f\in S(\mathbb{R}^{n})$ that $$f\ast\varphi_{j}^{i}=\sum_{(k,l)\in J_{j,i}}f\ast\varphi_{j}^{i}\ast \psi_{k}^{l}.$$ Therefore 
		$$\|f\ast\varphi_{j}^{i}\|_{L^{p}}\leq \sum_{(k,l)\in J_{j,i}}\|f\ast \psi_{k}^{l}\|_{L^{p}}\|\varphi_{j}^{i}\|_{L^{1}}\leq \sup_{j,i}\|\varphi_{j}^{i}\|_{L^{1}}\sum_{(k,l)\in J_{j,i}}\|f\ast \psi_{k}^{l}\|_{L^{p}}.$$
		Since $\#J_{j,i} \leq C$ by property \ref{5}, and since the $\ell^{1}$- and $\ell^{q}$-norms are equivalent in finite dimensions, we get that 
		\begin{align*}
			\sum_{j,i}a^{qjs}\|f\ast\varphi^{i}_{j}\|^{q}_{L^{p}} &\leq  	K \sum_{j,i}a^{qjs}\big (\sum_{(k,l)\in J_{j,i}}\|f\ast \psi_{k}^{l}\|_{L^{p}}\big)^{q} \\
			&\leq K \sum_{j,i}\sum_{(k,l)\in J_{j,i}}a^{qjs}\|f\ast \psi_{k}^{l}\|_{L^{p}}^{q},
		\end{align*}
		By property \ref{1}, $a^{j} \leq a^{k + M}$ whenever $(k,l) \in J_{j,i}$, and thus 
		$$	\sum_{j,i}a^{qjs}\|f\ast\varphi^{i}_{j}\|^{q}_{L^{p}} \leq K \sum_{k,l}\sum_{(j,i)\in J_{k,l}}a^{qks}\|f\ast \psi_{k}^{l}\|_{L^{p}}^{q} \leq  K \sum_{k,l}a^{qks}\|f\ast\psi^{l}_{k}\|^{q}_{L^{p}}.$$
		
		To prove (2), it suffices, by \cite[Theorem 6.4.2]{MR482275}, to prove that $B_{p,q}^{s}(\Omega)$ is a retract of $$\ell^{q}_{s}(L^p)=\{(f_{j,i}) \subset L^{p}:a^{sj}\|f_{j,i}\|_{L^{p}}\in\ell^{q}\}.$$
		Define $I:X_{p,q}^{s}(\Omega)\to \ell^{q}_{s}(L^p)$ and $P: \ell^{q}_{s}(L^p) \to X_{p,q}^{s}(\Omega)$  by 
		$$I(f)=(f\ast\varphi_{j}^{i}) \quad \text{and}\quad P(f_{j,i})=\sum_{j,i}(\sum_{(k,l)\in J_{j,i}}\varphi_{k}^{l})\ast f_{j,i},$$
		where $P$ is initially defined only for finitely supported sequences. 
		Then $PIf = f$ for $f \in X_{p,q}^{s}(\Omega)$. Therefore, we need only verify that the maps are bounded. $I$ is an isometry. For $P$ we observe that
		\begin{equation*}
			\|P(f_{j,i})\|^{q}_{B_{p,q}^{s}(\Omega)}
			=\sum_{\beta, \gamma}a^{sq\beta}\|\sum_{j,i}(\sum_{(k,l)\in J_{j,i}}\varphi_{k}^{l})\ast\varphi_{\beta}^{\gamma}\ast f_{j,i}\|_{L^{p}}^{q}. 
		\end{equation*}
		The sum of products $\sum_{(k,l)\in J_{j,i}}\widehat{\varphi}_{k}^{l}\widehat{\varphi}_{\beta}^{\gamma}$ is nonzero only if 
		$$(j,i)\in M_{\beta,\gamma}:=\{(j,i) \, : \, \textrm{there exists } (k,l)\in J_{j,i} \textrm{ such that   } (\beta,\gamma)\in J_{k,l}\}.$$
		Since $\#M_{\beta,\gamma} \leq C^2$, we get that
		\begin{eqnarray}
			\|P(f_{j,i})\|^{q}_{B_{p,q}^{s}(\Omega)}&=&\sum_{\beta,\gamma}a^{sq\beta}\|\sum_{(j,i)\in M_{\beta,\gamma}}(\sum_{(k,l)\in J_{j,i}}\varphi_{k}^{l})\ast\varphi_{\beta}^{\gamma}\ast f_{j,i}\|_{L^{p}}^{q} \nonumber \\
			&\leq&K\sum_{\beta,\gamma}\sum_{(j,i)\in M_{\beta,\gamma}}a^{sq\beta}\|(\sum_{(k,l)\in J_{j,i}}\varphi_{k}^{l})\ast\varphi_{\beta}^{\gamma}\ast f_{j,i}\|_{L^{p}}^{q} \nonumber \\
			&\leq&K\sum_{\beta,\gamma}\sum_{(j,i)\in M_{\beta,\gamma}}a^{sq\beta}\| f_{j,i}\|_{L^{p}}^{q} \nonumber \\
			&\leq&K\sum_{j,i}a^{sqj}\| f_{j,i}\|_{L^{p}}^{q}. \nonumber 
		\end{eqnarray}
		In the last inequality we used that $(j,i)\in M_{\beta,\gamma}$ if and only if $(\beta,\gamma)\in M_{j,i}$ and if so, property \ref{1} yields that $a^{\beta} \leq a^{j + 2M}$.
		
		To prove (3) note first that any $g \in X_{p',q'}^{-s}(\Omega)$ induces a continuous functional on $f\in X_{p,q}^{s}(\Omega)$, since
		\begin{eqnarray}
			|\langle f,g\rangle |&=&\big|\sum_{j, i}\sum_{(k,l)\in J_{j, i}}\langle f\ast \varphi_{j}^{i},g\ast \varphi_{k}^{l}\rangle \big|\leq \sum_{j, i}\sum_{(k,l)\in J_{j, i}}\| f\ast \varphi_{j}^{i}\|_{L^{p}} \|g\ast \varphi_{k}^{l}\|_{L^{p'}} \nonumber  \\
			&\leq& \left(\sum_{j, i}\sum_{(k,l)\in J_{j, i}}a^{sjq}\| f\ast \varphi_{j}^{i}\|_{L^{p}}^{q} \right)^{\frac{1}{q}}\left(\sum_{k, l}\sum_{(j, i)\in J_{k, l}} a^{-jq's}\|g\ast \varphi_{k}^{l}\|_{L^{p'}}^{q'} \right)^{\frac{1}{q'}} \nonumber \\
			&\leq& K\|f\|_{B_{p,q}^{s}(\Omega)}\|g\|_{B_{p',q'}^{-s}(\Omega)}, \nonumber
		\end{eqnarray}
		where $\langle \cdot, \cdot \rangle$ denotes the $L^2$-pairing. It follows that $B_{p',q'}^{-s}(\Omega) \subset (B_{p,q}^{s}(\Omega))^*$. Conversely, suppose that $\nu \in (B_{p,q}^{s}(\Omega))^*$. By Hahn-Banach, we can find an extension $(g_{j,i}) \in \ell^{q'}_{-s}(L^{p'})$ with the same norm. Let $g_m = \sum_{-m\leq j\leq m}\sum_{i} g_{j, i} \ast \varphi_{j}^i \in X_{p',q'}^{-s}(\Omega)$. Then, for any $f\in X_{p,q}^{s}(\Omega)$, we have that 
		$$\langle g_m, f\rangle = \sum_{-m\leq j\leq m}\sum_{i}\langle g_{j, i}\ast\varphi_{j}^{i},f\rangle = \sum_{-m\leq j\leq m}\sum_{i}\langle g_{j, i},f\ast\varphi_{j}^{i}\rangle = \nu(f)$$
		for all sufficiently large $m$ (depending on $f$). Furthermore, 
		\begin{align*}
			\|g_m\|_{B_{p',q'}^{-s}(\Omega)}^{q'} &= \sum_{j, i}a^{-jsq'}\|\sum_{(k,l)\in J_{j,i},-m\leq k\leq m}g_{k,l}\ast\varphi_{k}^{l}\ast\varphi_{j}^{i}\|_{L^{p'}}^{q'} \nonumber \\
			&\leq K \sum_{j, i}\sum_{(k,l)\in J_{j,i}}a^{-jsq'}\|g_{k,l}\|_{L^{p'}}^{q'} = K\sum_{k, l}\sum_{(j, i) \in J_{k,l}}a^{-jsq'}\|g_{k, l}\|_{L^{p'}}^{q'} \nonumber \\
			&\leq K\sum_{k, l} a^{-ksq'}\|g_{k, l}\|_{L^{p'}}^{q'}  = K \|\nu\|_{(B_{p,q}^{s}(\Omega))^*}^{q'}.
		\end{align*}
		Therefore the series $g = \lim_{m \to \infty} g_m$ converges absolutely (weak*, if $q = 1$) in $B_{p',q'}^{-s}(\Omega)$ to an element $g$ which, as a functional on $B_{p,q}^{s}(\Omega)$, equals $\nu$.
	\end{proof}
	
	\section{Sufficient conditions for trace class and boundedness} \label{sec:suf}
	In this section we will prove natural sufficient conditions for an extended Hankel operator $\Ha_\varphi^{\sigma, \tau}$ to be either trace class or Hilbert-Schmidt. We therefore obtain a criterion for membership in the Schatten class $S^p$, for $1 \leq p \leq 2$. We shall also give a useful sufficient criterion for certain extended Hankel operators to be bounded, which we shall apply when considering smooth domains with positive curvature, as well as polytopes.
	
	In what follows, we shall identify any kernel $k(x,y)$ on $\Omega \times\Omega$ with an integral operator on $L^2(\Omega)$, and we shall write $\|k\|_{S^p}$ or $\|k(x,y)\|_{S^p}$ for its $p$-Schatten norm, $1 \leq p \leq \infty$. In this notation, note that any $m(x,y)$ belonging to the projective tensor product $L^\infty(\Omega) \hat{\otimes} L^\infty(\Omega)$ is a Schur multiplier:
	\begin{equation} \label{eq:schurinf}
		\|mk\|_{S^p} \leq \|m\|_{L^\infty(\Omega) \hat{\otimes} L^\infty(\Omega)} \|k\|_{S^p}.
	\end{equation}
	See \cite[Section 3]{MR924766} for further details. By integration we obtain the following useful lemma.
	\begin{lm} \label{lem:schur}
		If $\varphi \in L^1$, then
		$$\|\hat{\varphi}(x+y)k(x,y)\|_{S^p} \leq \|\varphi\|_{L^1} \|k\|_{S^p}, \qquad 1 \leq p \leq \infty.$$
	\end{lm}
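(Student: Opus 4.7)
The plan is to realize $(x,y) \mapsto \widehat{\varphi}(x+y)$ as an element of the projective tensor product $L^\infty(\Omega) \hat{\otimes} L^\infty(\Omega)$ of norm at most $\|\varphi\|_{L^1}$, so that the conclusion follows immediately from the already-cited Schur multiplier bound \eqref{eq:schurinf}.

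The starting point is the elementary identity
\[
\widehat{\varphi}(x+y) = \int_{\mathbb{R}^n} \varphi(\xi)\, e^{-i\xi \cdot x}\, e^{-i\xi \cdot y}\, d\xi,
\]
which exhibits the kernel as an absolutely convergent ``continuous sum'' of rank-one tensors $e_\xi(x) \otimes e_\xi(y)$ with $e_\xi(z) := e^{-i\xi \cdot z}$. Each factor has $L^\infty(\Omega)$-norm equal to $1$, and the ``total mass'' of the decomposition is $\int |\varphi(\xi)|\, d\xi = \|\varphi\|_{L^1}$.

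To turn this into a genuine projective tensor expansion, I would approximate $\varphi$ in $L^1(\mathbb{R}^n)$ by simple functions $\varphi_N = \sum_{k=1}^{N} c_k \chi_{E_k}$ with the $E_k$ of finite measure; picking a representative $\xi_k \in E_k$, the associated Riemann-sum kernels
\[
K_N(x,y) = \sum_{k=1}^{N} c_k\, m(E_k)\, e^{-i\xi_k \cdot x}\, e^{-i\xi_k \cdot y}
\]
lie in the algebraic tensor product $L^\infty(\Omega) \otimes L^\infty(\Omega)$ with projective norm at most $\sum_k |c_k|\, m(E_k) = \|\varphi_N\|_{L^1}$. By choosing the $E_k$ small enough (using uniform continuity of $\xi \mapsto e^{-i\xi\cdot x}e^{-i\xi\cdot y}$ on compacts and truncation for large $|\xi|$), the kernels $K_N$ converge uniformly on $\Omega \times \Omega$ to $\widehat{\varphi_N}(x+y)$, and $\widehat{\varphi_N}(x+y) \to \widehat{\varphi}(x+y)$ uniformly since $\|\varphi_N-\varphi\|_{L^1} \to 0$. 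The projective tensor norm is lower semicontinuous under such limits, so
\[
\|\widehat{\varphi}(x+y)\|_{L^\infty(\Omega) \hat{\otimes} L^\infty(\Omega)} \leq \liminf_N \|\varphi_N\|_{L^1} = \|\varphi\|_{L^1}.
\]

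Substituting this bound into \eqref{eq:schurinf} with $m(x,y) = \widehat{\varphi}(x+y)$ yields the desired inequality for every $1 \leq p \leq \infty$. There is no real obstacle here; the only point requiring a touch of care is the density/approximation argument used to promote the integral representation to a genuine projective decomposition, but this is a standard feature of Bochner-type expansions in projective tensor products of function spaces.
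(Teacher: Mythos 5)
Your proof is correct and takes essentially the same route as the paper: exhibiting $\widehat{\varphi}(x+y)$ as an absolutely convergent integral of rank-one tensors $e^{-i\xi\cdot x}\otimes e^{-i\xi\cdot y}$ of unit $L^\infty$-norm, so that its projective tensor norm is at most $\|\varphi\|_{L^1}$, and then invoking \eqref{eq:schurinf}. The paper states this in one line, whereas you have supplied the standard approximation argument (simple functions and lower semicontinuity) that justifies the Bochner-type projective expansion; this is a welcome elaboration but not a different approach.
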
 
	\begin{proof}
		This follows from \eqref{eq:schurinf}, since $\hat{\varphi}(x+y) = \int_{\mathbb{R}^n} e^{-2\pi i x \xi}e^{-2\pi i y \xi} \, \varphi(\xi) \, d\xi$ is an absolutely convergent integral of elements from $L^\infty(\Omega) \hat{\otimes} L^\infty(\Omega)$.
	\end{proof}
	The sets 
	\begin{equation} \label{eq:gijdef}
		G_{j,i} := \bigcup_{x\in A_j^i}\Omega\cap (2x-\Omega) = \bigcup_{x\in A_j^i} M(x),
	\end{equation}
	play a central role in the sufficiency proofs. Recall that $M(x) = \Omega\cap (2x-\Omega)$ is the Macbeath region centered at $x$, and note that $x \in G_{j,i}$ if and only if $x \in \Omega$ and there is $y \in \Omega$ such that $\frac{x+y}{2} \in A^i_j$.
	We begin by establishing some basic properties of these sets.
	\begin{lm}\label{Gestim}
		For any indices $j, i$, the set $G_{j,i}$ is convex and there is $C>0$, depending only on $\Omega$, such that $\omega_{\Omega}(x)\leq C a^j$ for $x\in G_{j,i}$.
	\end{lm}
	\begin{proof}
		For convexity let $x_1,x_2\in G_{j,i}$ and $\lambda \in (0,1)$. Then by definition we can find $y_1,y_2\in \Omega$ such that $\frac{x_1+y_1}{2},\frac{x_2+y_2}{2}\in A_j^i$. By convexity of $A_j^i$, the vectors $\lambda x_1+(1-\lambda )x_2$ and $\lambda y_1+(1-\lambda) y_2$ satisfy $\frac{\lambda x_1+(1-\lambda )x_2+\lambda y_1+(1-\lambda) y_2}{2}=\lambda \frac{x_1+y_1}{2}+(1-\lambda) \frac{x_2+y_2}{2}\in A_j^i$. Thus $\lambda x_1+(1-\lambda )x_2 \in G_{j,i}$.
		
		For the estimate, suppose that $x\in G_{j,i}$ and $y\in \Omega$ with $\frac{x+y}{2}\in A_j^i$. Then, by the concavity of $\omega_{\Omega}^{\frac{1}{n}}$, 
		\[\omega_{\Omega}^{\frac{1}{n}}(x)\leq 2 \omega_{\Omega}^{\frac{1}{n}}(\frac{x+y}{2})-\omega_{\Omega}^{\frac{1}{n}}(y)\leq 2 \omega_{\Omega}^{\frac{1}{n}}(\frac{x+y}{2})\lesssim a^{\frac{j}{n}}. \qedhere \]
	\end{proof}
	The following lemma, inspired by \cite[Lemma 6.2]{MR2327289}, is key to understanding the behavior of $G_{j,i}$. We let $M(x,\lambda)=(1-\lambda)x+\lambda M(x)=x+\lambda(x-\Omega)\cap (\Omega-x)$ denote the $\lambda$-dilation of $M(x)$, with respect to the center $x$.
	\begin{lm}\label{Barany}
		Let $x, y \in \Omega$ and $\epsilon\in (0,1)$. If $M(x,\epsilon)\cap M(y,1-\epsilon)\neq \emptyset$, then $M(y)\subset M(x,\frac{2+\epsilon}{\epsilon})$.
	\end{lm}
	\begin{proof}
		Let us write $\rho=\frac{2+\epsilon}{\epsilon}$, and let $b\in M(y)$. By assumption, there exists an element $a\in M(x,\epsilon)\cap M(y,1-\epsilon)$. Accordingly, there are $k_1,k_2\in \Omega$ such that $a=x+\epsilon(x-k_1)=y+(1-\epsilon)(k_2-y)$. Since $b \in M(y)$, there furthermore exists $k_3\in\Omega$ such that
		\begin{eqnarray}
			b&=&2y-k_3=\frac{2}{\epsilon}(x+\epsilon(x-k_1)-(1-\epsilon)k_2)-k_3  \nonumber \\
			&=&x+\rho\left(x-\frac{2}{\rho}k_1-\frac{2(1-\epsilon)}{\epsilon\rho}k_2-\frac{1}{\rho}k_3\right) \nonumber.
		\end{eqnarray}
		Thus $b \in x+\rho (x-\Omega)$, since $\frac{2}{\rho}+\frac{2(1-\epsilon)}{\epsilon\rho}+\frac{1}{\rho}=1.$ On the other hand,  all of $\Omega$ is contained in the dilation $x+\rho(\Omega-x)$, since $\rho > 1$. That is, $b \in M(x, \rho).$
	\end{proof}
	We can now precisely describe the behavior of $m(G_{j,i})$.
	\begin{lm}\label{Gij}
		There exist $C_1,C_2>0$, depending only on $\Omega$, such that $$C_1 a^j\leq m(G_{j,i})\leq C_2a^j,$$
		for all indices $j,i$. The set $G_{j,i}$ was defined in \eqref{eq:gijdef}.
	\end{lm}
	\begin{proof}
		The first inequality is obvious, since $A_j^i\subset G_{j,i}$. 
		
		For the other inequality, let $x_{j,i}$ be the center of $A_j^i$. Then, 
		$$M(x_{j,i},\epsilon)\supset (1-\epsilon)x_{j,i}+\epsilon (A_j^i \cap (2x_{j,i}-A_j^i))= (1-\epsilon)x_{j,i}+\epsilon A_j^i.$$ 
		By property \ref{3}, there is thus $\epsilon\in (0,\frac{1}{2})$ such that the contractions
		$$2\epsilon x_{j,i}+(1-2\epsilon) A_j^i = T^{-1}_{j,i}(-\frac{1}{2}+\epsilon,\frac{1}{2}-\epsilon)^n$$
		cover $\Omega$. For fixed $j,i$, we therefore have that for any $y \in A_j^i$, there is $k, l$ such that 
		$$y \in 2\epsilon x_{k,l}+(1-2\epsilon) A_k^l \subset A_k^l \cap  M(x_{k,l},1-2\epsilon).$$
		Thus, by Lemma \ref{Barany} and property \ref{5}, we conclude that
		$$ m(G_{j,i})= m\left(\bigcup_{y\in A_j^i}M(y) \right)\leq m\left (\bigcup_{k,l\in J_{j,i}} M\left(x_{k,l},\frac{3-2\epsilon}{1-2\epsilon}\right)\right)\leq C a^j.$$
		This finishes the proof.
	\end{proof}
	The following estimates are key to our sufficiency criteria for $S^1$ and $S^2$.
	\begin{theo}\label{Gjilevel}
		Let $\Omega$ be a convex subset of $\mathbb{R}^n$ that does not contain lines. Then there is $C=C(\Omega,n)$ such that 
		$$m(G_{j,i}\cap \Delta_k(\Omega))\leq Ca^{\frac{2k+j(n-1)}{n+1}},$$ for all $j,i,k$.
		Furthermore, if $\Omega$ is a polyhedron, the better estimate
		$$m(G_{j,i}\cap \Delta_k(\Omega))\leq C(1 + |k-j|^{n-1}) a^{k},$$ holds.
	\end{theo}
	\begin{proof}
		First, let us observe the following interaction between normalization and the level sets $\Delta_k$. Let $K$ be a bounded convex set and let $F_{K}(x)=m(K)^{-\frac{1}{n}}x$. Then,
		\begin{align*} 
			\Delta_s(m(K)^{-\frac{1}{n}}K)& =\{x\in m(K)^{-\frac{1}{n}}K:\omega_{m(K)^{-\frac{1}{n}}K}(x)\in (a^{s-1},a^s)\} \\
			&=F_{K}(\{y\in K:m(K)^{-1}\omega_{K}(y)\in (a^{s-1},a^s)\}) \\
			&= F_K(\Delta_{s+\log_a m(K)}(K)).
		\end{align*}
		
		In the general case, we rely on the estimate
		\begin{equation} \label{eq:floating}
			m(\{x\in K:\omega_K(x)<t\})\leq C t^{\frac{2}{n+1}},
		\end{equation}
		which holds for all bounded convex sets $K$ with $m(K) = 1$, for a constant $C$ which only depends on the dimension $n$, see \cite[Theorem~1]{MR986636}. Actually, \cite{MR986636} sees $\omega_K$ replaced by a volume induced by the convex minimal cap of $K$ at $x$; however, this is an equivalent quantity, see \cite[Lemmas~6.3 and 6.5]{MR2327291}. Applied with $K = m(G_{j,i})^{-\frac{1}{n}}G_{j,i}$, and using Lemma \ref{Gij}, we get that
		$$ m(\Delta_{s}(G_{j,i}))=m(F_{G_{j,i}}^{-1}(\Delta_{s-\log_a m(G_{j,i})}(m(G_{j,i})^{-\frac{1}{n}}G_{j,i})))\leq C(\Omega,n) a^ja^{(s-j)\frac{2}{n+1}}.$$
		The result now follows by observing that $\omega_{G_{j,i}}\leq \omega_{\Omega}$, and thus
		$$G_{j,i}\cap \Delta_k(\Omega)\subset \bigcup_{s\leq k}\Delta_s(G_{j,i}).$$
		
		When $\Omega$ is a polyhedron, a better estimate than \eqref{eq:floating} is available. We start by observing that, since the Minkowski sum of polyhedra is a polyhedron, the set $G_{j,i}=\Omega\cap (2A_j^i-\Omega)$ is a bounded polyhedron, that is, a polytope. For a polytope $P$ of measure $1$ with extreme points $\ext(P)$, \cite[Lemma 1.8]{MR1119928} gives the estimate 
		$$m(\{x\in \Omega:\omega_{P}(x)<t\})\leq C(n)\varphi(P)t|\log\frac{1}{t}|^{n-1},$$ where $\varphi(P)$ is defined as follows:
		\begin{enumerate}
			\item if $P$ is a polyhedron of dimension $1$ (an interval), then $\varphi(P)=2$;
			\item if $P$ is a polyhedron of dimension $n$, then $\varphi(P)=\sum_{x \in \ext(P)}\varphi(P\cap H_x)$, where $H_x$ is a hyperplane that separates $x$ from the other extreme vectors of $P$. 
		\end{enumerate}
		Geometrically, $\varphi(P)$ represents the number of complete flags of $P$, that is, the number of sequences $F_0\subset F_1\subset ... \subset F_{n-1}\subset P$, where $F_i$ are faces of $P$ of dimension $i$. To finish the proof, it therefore suffices to observe that $\varphi(m(G_{j,i})^{-\frac{1}{n}}G_{j,i})=\varphi(G_{j,i})\leq C(\Omega)$, which is evident from the elementary properties $\phi(A\cap B)\leq \phi(A)\phi(B)$ and $\phi(A+B)\leq \phi(A)\phi(B)$.
		
	\end{proof}
	We now present the promised trace class condition. The proof can be understood as a far-reaching generalization of Peller's original approach \cite[Ch 6.1]{MR1949210} to the trace class condition for classical Hankel operators.
	\begin{theo}\label{S1}
		Let $\Omega$ be an admissible subset of $\mathbb{R}^{n}$ with decomposition $\{A_{j}^{i}\}$, and equip $2\Omega$ with the decomposition $\{2A_{j}^{i}\}$. Suppose that $\sigma, \tau>-\frac{1}{n+1}$ and that $\varphi\in B_{1,1}^{\sigma+\tau+1}(2\Omega)$. Then $\Ha_{\varphi}^{\sigma,\tau}\in S^{1}$ and there is a constant $C = C(\Omega, \sigma, \tau) > 0$ such that $\|\Ha_{\varphi}^{\sigma,\tau}\|_{S^{1}}\leq C \|\varphi\|_{B_{1,1}^{\sigma+\tau+1}(2\Omega)}.$ If $\Omega$ is a polyhedron the same result holds for all $\sigma,\tau>-\frac{1}{2}$.
	\end{theo}
	\begin{proof}
		Let $(\varphi^{i}_{j}) \in \mathscr{P}(\{2A_{j}^{i}\})$. By Lemma \ref{lem:schur}, we then have that 
		\begin{eqnarray}\|\Ha_\varphi^{\sigma,\tau}\|_{S^1}&=&\|\widehat{\varphi}(x+y)\omega_{\Omega}^\sigma(x)\omega_{\Omega}^\tau(y)\chi_{\Omega}(x)\chi_{\Omega}(y)\|_{S^1} \nonumber  \\
			&\leq&\sum_{j,i}\|\widehat{\varphi}(x+y)\widehat{\varphi}_j^i(x+y)\omega_{\Omega}^\sigma(x)\omega_{\Omega}^\tau(y)\chi_{G_{j,i}}(x)\chi_{G_{j,i}}(y)\|_{S^1} \nonumber \\
			&\leq&\sum_{j,i}\|\varphi\ast\varphi_j^i\|_{L^1}\|\omega_{\Omega}^\sigma(x)\omega_{\Omega}^\tau(y)\chi_{G_{j,i}}(x)\chi_{G_{j,i}}(y)\|_{S^1} \nonumber \\
			&\leq&\sum_{j,i}\|\varphi\ast\varphi_j^i\|_{L^1}\|\omega_{\Omega}^\sigma(x)\chi_{G_{j,i}}(x)\|_{L^2}\|\omega_{\Omega}^\tau(y)\chi_{G_{j,i}}(y)\|_{L^2}. \nonumber
		\end{eqnarray}
		By Lemma \ref{Gestim} and Theorem \ref{Gjilevel} there is $C = C(\Omega) > 0$ such that
		\begin{align*}
			\int_{G_{j,i}}\omega_{\Omega}^{2\sigma}(x)dx &\approx\sum_{k<j+C}m(G_{j,i}\cap \Delta_k(\Omega))a^{2k\sigma} \\ &\lesssim a^{j\frac{n-1}{n+1}}\sum_{k<j+C}a^{2k(\sigma+\frac{1}{n+1})}\approx a^{j(2\sigma+1)},
		\end{align*} 
		whenever $\sigma>-\frac{1}{n+1}$. Thus
		\begin{equation}\label{eq:S1ineq}
			\|\Ha_\varphi^{\sigma,\tau}\|_{S^1} \lesssim \sum_{j,i}a^{j(\sigma + \tau + 1)} \|\varphi\ast\varphi_j^i\|_{L^1}.
		\end{equation}
		When $\Omega$ is a polyhedron, we can apply the better estimate of Theorem \ref{Gjilevel} in order to obtain the same inequality \eqref{eq:S1ineq}, but this time for all $\sigma, \tau >-\frac{1}{2}$. 
	\end{proof}
	We can apply a similar argument to obtain the sufficiency part of the Hilbert--Schmidt characterization. It is easy to see that the range of $\sigma$ and $\tau$ in Theorem~\ref{S1} is sharp. Curiously, this is no longer the case in Theorem~\ref{S2}. By more detailed geometric computations for simple polyhedra and smooth strongly convex sets,  we will later extend the Hilbert--Schmidt characterization to a sharp range of generalized Hankel operators $\Ha_{\varphi}^{\sigma,\tau}$ in these settings, see Lemma~\ref{S2hardy} and Theorem~\ref{S2smooth}, respectively.
	\begin{theo}\label{S2}
		Let $\Omega$ be an admissible convex subset of $\mathbb{R}^{n}$ with decomposition $\{A_{j}^{i}\}$, and equip $2\Omega$ with the decomposition $\{2A_{j}^{i}\}$. Suppose that $\sigma, \tau >-\frac{1}{2(n+1)}$ and that $\varphi\in B_{2,2}^{\sigma+\tau+\frac{1}{2}}(2\Omega)$. Then $\Ha_{\varphi}^{\sigma,\tau}\in S^{2}$ and there is a constant $C = C(\Omega, \sigma, \tau) >0$ such that $\|\Ha_{\varphi}^{\sigma,\tau}\|_{S^{2}}\leq C \|\varphi\|_{B_{2,2}^{\sigma+\tau+\frac{1}{2}}(2\Omega)}$. If $\Omega$ is a polyhedron the same result holds for all $\sigma,\tau>-\frac{1}{4}$.
	\end{theo}
	\begin{proof}
		It is equivalent to prove that
		$$\omega_{\Omega}^{2\sigma} \ast \omega_{\Omega}^{2\tau} (x)\lesssim \omega_{\Omega}^{2\sigma+2\tau+1}(x/2), \qquad x \in 2\Omega,$$
		since 
		$$\|\Ha_\phi^{\sigma,\tau}\|_{S^2}^2 = \int_{2\Omega} |\widehat{\phi}(x)|^2 \omega_{\Omega}^{2\sigma} \ast \omega_{\Omega}^{2\tau} (x) \, dx.$$
		Suppose that $x \in 2\Delta_\beta$ and note that
		\begin{eqnarray}
			\omega_{\Omega}^{2\sigma} \ast \omega_{\Omega}^{2\tau}(x)&=&\int_{\mathbb{R}^n}\omega_{\Omega}^{2 \sigma} (x-y)\omega_{\Omega}^{2\tau}(y)dy\approx\sum_{j,k}a^{2\sigma j}a^{2\tau k}\int_{\mathbb{R}^n}\chi_{\Delta_j}(x-y)\chi_{\Delta_k}(y) \, dy \nonumber \\
			&=& \sum_{j,k}a^{2\sigma j}a^{2\tau k}m((x-\Delta_j)\cap \Delta_k\cap M(x/2)). \nonumber 
		\end{eqnarray}
		Furthermore, if $(x-\Delta_j)\cap M(x/2)\neq\emptyset,$ then by symmetry, also $\Delta_j\cap M(x/2)\neq\emptyset$ and thus, by Lemma \ref{Gestim}, there is $c_1>0$ such that $j\leq c_1 + \beta$. For the same reason, we only need to consider $k\leq c_1 + \beta$ in the sum.
		
		We now note that for a general domain $\Omega$, Theorem~\ref{Gjilevel} gives the estimate
		\begin{align*}
			m((x-\Delta_j)\cap \Delta_k\cap M(x/2)) &\leq m( (x-\Delta_j) \cap M(x/2))^{\frac{1}{2}} m( \Delta_k \cap M(x/2))^{\frac{1}{2}} \\ &\lesssim a^{\frac{2j+\beta(n-1)}{2(n+1)}} a^{\frac{2k+\beta(n-1)}{2(n+1)}},
		\end{align*}
		which yields the desired inequality $\omega_{\Omega}^{2\sigma} \ast \omega_{\Omega}^{2\tau}(x) \lesssim a^{\beta(2\sigma + 2\tau + 1)}$, as long as $\sigma, \tau > -\frac{1}{2(n+1)}$. For a polyhedron $\Omega$, we obtain the inequality for all $\sigma, \tau > -\frac{1}{4}$.
	\end{proof}
	Interpolating between Theorem \ref{S1} and \ref{S2} we get the following corollary, which is Theorem~\ref{thm:mainsuf}.
	\begin{cor}\label{1 to 2 suf}
		Let $\Omega$ be an admissible subset of $\mathbb{R}^{n}$ with decomposition $\{A_{j}^{i}\}$, equip $2\Omega$ with the decomposition $\{2A_{j}^{i}\}$, and let $1\leq p\leq 2$ and $\sigma,\tau>-\frac{1}{p(n+1)}$. Suppose that $\varphi\in B_{p,p}^{\sigma+\tau+\frac{1}{p}}(2\Omega)$. Then $\Ha_{\varphi}^{\sigma,\tau}\in S^{p}(\PW(\Omega))$, and there is a constant $C = C(\Omega, \sigma, \tau, p) > 0$ such that 
		$$\|\Ha_{\varphi}^{\sigma,\tau}\|_{S^{p}}\leq C\|\varphi\|_{B_{p,p}^{\sigma+\tau+\frac{1}{p}}(2\Omega)}.$$ If $\Omega$ is a polyhedron the same is true for $\sigma,\tau>-\frac{1}{2p}$.
	\end{cor}
	\begin{proof}
		In light of Lemma~\ref{Besov properties}, we can apply the method of complex interpolation. We may without issue consider the operator $\Ha_{\varphi}^{\sigma,\tau}$ to be defined for all complex $\sigma,\tau\in\mathbb{C}$. Note then that $\Ha_{\varphi}^{\sigma,\tau}$ can be obtained by composition of $\Ha_{\varphi}^{\Real\sigma,\Real\tau}$ with two unitary factors (corresponding to multiplication by $\omega_{\Omega}^{i\Imaginary\sigma}(x)$ and $\omega_{\Omega}^{i\Imaginary\tau}(y)$ on the Fourier side). Therefore $$\|\Ha_{\varphi}^{\sigma,\tau}\|_{S^p}=\|\Ha_{\varphi}^{\Real\sigma,\Real\tau}\|_{S^p}.$$
		
		For given $\sigma_1,\sigma_2,\tau_1,\tau_2 \in \mathbb{R}$, consider in the strip $0 \leq \Real z \leq 1$, the functions
		$$\sigma(z)=(1-z)\sigma_{1}+z\sigma_{2}, \; \tau(z)=(1-z)\tau_{1}+z\tau_{2}, \; \frac{1}{\rho(z)} = 1 - \frac{\Real z}{2}.$$
		By Theorems \ref{S1} and \ref{S2} we have for appropriate $\sigma_j$ and $\tau_j$ that
		$$\|\Ha^{\sigma_j, \tau_j}_\varphi\|_{S^{j}} \leq C \|\varphi\|_{B_{j,j}^{\sigma+\tau+\frac{1}{j}}(2\Omega)}, \qquad j=1,2,$$
		and complex interpolation therefore yields 
		$$\|\Ha^{\sigma(z), \tau(z)}_\varphi\|_{S^{\rho(z)}} \leq C  \|\varphi\|_{B_{\rho(z),\rho(z)}^{\Real(\sigma(z)+\tau(z))+\frac{1}{\rho(z)}}(2\Omega)}.$$
		In order to obtain the stated result, we require that $\rho(z) = p$, and we therefore choose to consider $z = 2 (1 - 1/p)$. The proof is then finished by picking $\sigma_1, \sigma_2, \tau_1, \tau_2$ such that
		\begin{equation*}
			\sigma=(1-z)\sigma_1+z\sigma_{2} \textrm{ and } \tau=(1-z)\tau_1+z\tau_{2}.  \qedhere
		\end{equation*}
		The constraints on $\sigma_j, \tau_j$, $j=1,2$, give the corresponding constraints on $\sigma$ and $\tau$.
	\end{proof}
	
	To finish this subsection, we will present a sufficient condition for the boundedness of $\Ha_{\varphi}^{\sigma,\tau}$. Given $j,i,k,l$, let 
	$\beta = \beta(j, i, k, l)$ be an index such that $(A_{j}^{i}+A_{k}^{l})\cap 2A_{\beta}^{\gamma}\neq\emptyset$, that is, $(\beta, \gamma) \in J_{j,i,k,l}$. There is only a finite number of such $\beta$ to choose from; recall that property \ref{5} says that $\# J_{j,i,k,l} \leq C$. For $\sigma, \tau, \rho \in \mathbb{R}$, we introduce the matrix
	\begin{equation} \label{eq:Tsigmataurho}
		T_{\sigma, \tau, \rho} = (a^{j\sigma}a^{k\tau}a^{-\beta\rho})_{(j,i),(k,l)},
	\end{equation}
	understood as an operator $T_{\sigma, \tau, \rho} \colon \ell^2(\mathbb{Z}^{2}) \to \ell^2(\mathbb{Z}^{2})$ (we understand the $(j,i,k,l)$:th element of $T_{\sigma, \tau, \rho}$ to be $0$ if either $A_j^i$ or $A_k^l$ are not part of the decomposition $\{A_j^i\}$). Note that property \ref{1} shows that the boundedness of $T_{\sigma, \tau, \rho}$ is independent of the choice of $\beta$-function.
	\begin{lm}\label{Sinfty}
		Let $\Omega$ be an admissible subset of $\mathbb{R}^{n}$ with decomposition $\{A_{j}^{i}\}$, and equip $2\Omega$ with the decomposition $\{2A_{j}^{i}\}$. Let $\sigma,\tau,\rho\in\mathbb{R}$ be such that the operator $T_{\sigma,\tau,\rho}$ is bounded.
		Then $\varphi\in B_{\infty,\infty}^{\rho}(2\Omega)$ implies that $\Ha_{\varphi}^{\sigma,\tau}$ is bounded, and there is a constant $C > 0$ such that
		\begin{equation*}
			\|\Ha_{\varphi}^{\sigma,\tau}\| \leq C \|T_{\sigma,\tau,\rho}\|\|\varphi\|_{B_{\infty,\infty}^{\rho}(2\Omega)}.
		\end{equation*}
	\end{lm}
	\begin{proof}
		Let $\{E^i_j\}$ be as in the proof of Lemma~\ref{S2}, and note that
		$$\widehat{\Ha_{\varphi}^{\sigma,\tau}f}(y) = \sum_{j,i,k,l} I_{j,i,k,l} \hat{f}(y), \qquad y \in \Omega,$$
		where the integral operators $I_{j, i, k, l} \colon L^2(\Omega) \to L^2(\Omega)$ have kernel
		$$I_{j,i,k,l}(x,y) = \widehat{\varphi}(x+y) \omega_{\Omega}^{\sigma}(x)\chi_{E_{j}^{i}}(x)\omega_{\Omega}^{\tau}(y)\chi_{E_{k}^{l}}(y).$$
		By \eqref{eq:schurinf},
		$$	\|I_{i,j,k,l}\| \leq K a^{j\sigma}a^{k\tau}\|\widehat{\varphi}(x+y)\chi_{E_{j}^{i}}(x)\chi_{E_{k}^{l}}(y)\|.$$
		Let $(\varphi_{j}^i) \in\mathscr{P}(\{2A_{j}^{i}\})$. Then 
		$$\chi_{E_{j}^{i}}(x)\chi_{E_{k}^{l}}(y)=\sum_{(\beta,\gamma)\in J_{j,i,k,l}}\widehat{\varphi}_{\beta}^{\gamma}(x+y)\chi_{E_{j}^{i}}(x)\chi_{E_{k}^{l}}(y),$$ 
		and thus
		\begin{align*}
			\|I_{j,i,k,l}\|&\leq 
			Ka^{j\sigma}a^{k\tau}\sum_{(\beta,\gamma)\in J_{j,i,k,l}}\|\widehat{\varphi}(x+y)\widehat{\varphi}_{\beta}^{\gamma}(x+y)\| \\ 
			&= Ka^{j\sigma}a^{k\tau}\sum_{(\beta,\gamma)\in J_{j,i,k, l}}\|\varphi\ast\varphi_{\beta}^{\gamma}\|_{L^{\infty}} \nonumber \\
			&\leq K\|\varphi\|_{B_{\infty,\infty}^{\sigma+\tau}(2\Omega)}a^{j\sigma}a^{k\tau} \sum_{(\beta,\gamma)\in J_{j,i,k, l}}a^{-\beta\rho}.
		\end{align*}
		Hence
		$$\|I_{j,i,k,l}\| \leq  K\|\varphi\|_{B_{\infty,\infty}^{\rho}(2\Omega)} a^{j\sigma}a^{k\tau}a^{-\beta\rho},$$ 
		where $\beta = \beta(j,i,k,l)$ is as in \eqref{eq:Tsigmataurho}.
		Let $B = B(L^2(\Omega))$ denote the unit ball of $L^2(\Omega)$. Then 
		\begin{align*} \|\Ha_{\varphi}^{\sigma,\tau}\|&= \sup_{f,g\in B}|\sum_{j,i,k,l}\langle I_{j,i,k,l}f,g \rangle| 
			= \sup_{f,g\in B}|\sum_{j,i,k,l}\langle I_{j,i,k,l}f\chi_{E_{j}^{i}},g\chi_{E_{k}^{l}}\rangle| \nonumber \\
			&\leq \sup_{f,g\in B}\sum_{j,i,k,l} \|I_{j,i,k,l}\|\|f\chi_{E_{j}^{i}}\|_{L^{2}}\|g\chi_{E_{k}^{l}}\|_{L^{2}}\nonumber \\
			&\leq K\|\varphi\|_{B_{\infty,\infty}^{\sigma+\tau}(2\Omega)}\sup_{\|a\|_{\ell^{2}},\|b\|_{\ell^{2}}\leq1}\sum_{j,i,k,l} a^{j\sigma}a^{k\tau}a^{-\beta\rho}|a_{j,i}||b_{k,l}|\nonumber \\
			&= K\|\varphi\|_{B_{\infty,\infty}^{\sigma+\tau}(2\Omega)}\sup_{\|a\|_{\ell^{2}},\|b\|_{\ell^{2}}\leq1} \langle T_{\sigma,\tau,\rho}|\overline{a}|,|\overline{b}|\rangle =K \|\varphi\|_{B_{\infty,\infty}^{\sigma+\tau}(2\Omega)}\|T_{\sigma,\tau,\rho}\|\nonumber,
		\end{align*} 
		where $|\overline{a}|$ denotes the sequence $(|a_{j,i}|)_{j,i}$.
	\end{proof}
	We now give a continuous version of Lemma \ref{Sinfty}. Importantly, the continuous formulation is independent of the decomposition $\{A^i_j\}$.
	\begin{theo}\label{Sinfty2}
		Let $\Omega$ be an admissible subset of $\mathbb{R}^n$ with decomposition $\{A_j^i\}$, and let $\sigma, \tau, \rho \in \mathbb{R}$. If 
		\[K_{\sigma, \tau,\rho}(x,y)=\frac{\omega_{\Omega}^{\sigma}(x)\omega_{\Omega}^{\tau}(y)}{\omega_{\Omega}^{\rho}(\frac{x+y}{2})}\]
		is the kernel of a bounded integral operator on $L^2(\Omega)$, then the operator $T_{\sigma+\frac{1}{2},\tau+\frac{1}{2},\rho}$ introduced in \eqref{eq:Tsigmataurho} is bounded. 
	\end{theo}
	\begin{proof}
		Let $\varphi_j^i\in\mathscr{P}(\{A_j^i\})$, and let $\psi_j^i = \varphi_j^i/\|\varphi_j^i\|_{L^2}$ Then
		$$\langle K_{\sigma, \tau,\rho}(\widehat{\psi}_j^i),\widehat{\psi}_k^l\rangle=\int_{\mathbb{R}^{n}}\int_{\mathbb{R}^{n}}\frac{\omega_{\Omega}^{\sigma}(x)\omega_{\Omega}^{\tau}(y)}{\omega_{\Omega}^{\rho}(\frac{x+y}{2})}\widehat{\psi}_j^i(x)\widehat{\psi}_k^l(y)dxdy\approx a^{j(\sigma+\frac{1}{2})}a^{k(\tau+\frac{1}{2})}a^{-\beta\rho},$$
		where $\beta = \beta(j,i, k, l)$ is as in \eqref{eq:Tsigmataurho}.
		Therefore, for $x, y \in \ell^2(\mathbb{Z}^2)$,
		\begin{eqnarray}
			|\langle T_{\sigma+\frac{1}{2},\tau+\frac{1}{2},\rho}x,y\rangle|&\lesssim& \sum_{j,i}\sum_{k,l}\langle K(\widehat{\psi}_j^i),\widehat{\psi}_k^l\rangle|x_{j,i}||y_{k,l}| \nonumber \\
			&=&\langle K(\sum_{j,i}\widehat{\psi}_j^i|x_{j,i}|),\sum_{k,l}\widehat{\psi}_k^l|y_{k,l}|\rangle \nonumber \\
			&\leq& \|K\|\|\sum_{j,i}\widehat{\psi}_j^i|x_{j,i}|\|_{L^2}\|\sum_{k,l}\widehat{\psi}_k^l|y_{k,l}|\|_{L^2} \nonumber \\
			&\lesssim& \|K\|\|x\|_{\ell^2}\|y\|_{\ell^2}, \nonumber
		\end{eqnarray}
		where the last inequality holds since $\psi_{j,i}$ are almost disjoint in the sense of property \ref{5} and normalized.
	\end{proof}
	\section{Necessity of the Besov space condition}\label{sec:nec}
	In this section we prove that the Besov space condition is necessary, for the full range $1 \leq p \leq \infty$ and all $\sigma, \tau \in \mathbb{R}$. We begin with a lemma which follows along the lines of the construction of $B_{p,q}^{s}(\Omega)$, cf. Lemma~\ref{Besov properties}.
	\begin{lm}\label{besov on 2}
		Let $\Omega$ be an admissible subset of $\mathbb{R}^{n}$ with decomposition $\{A_{j}^{i}\}$, and equip $2\Omega$ with the decomposition $\{2A_{j}^{i}\}$. There exists a family $(\varphi_{j}^{i})$ of smooth functions with $\supp\widehat{\varphi}_{j}^{i}\subset A_{j}^{i}$ and $\sup_{j, i} \|\varphi_j^i\|_{L^{1}(\mathbb{R}^n)} < \infty$, such that the family $(\psi_j^i)$, $\psi_{j}^{i}=a^{-j}(\varphi_{j}^{i})^{2}$, generates the Besov space $B_{p,q}^{s}(2\Omega)$, in the sense that
		\[
		C_1\|f\|_{B_{p,q}^{s}(2\Omega)} \leq \left(\sum_{i,j}a^{qjs}\|f\ast\psi^{i}_{j}\|^{q}_{L^{p}}\right)^{\frac{1}{q}} \leq C_2 \|f\|_{B_{p,q}^{s}(2\Omega)}, \qquad f \in X_{p,q}^{s}(2\Omega).
		\]
	\end{lm}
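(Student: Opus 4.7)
The plan is to mirror the construction of Proposition~\ref{exist}. I would choose a smooth even bump $\widehat{\varphi}$ with $\widehat{\varphi}\equiv 1$ on $(-\frac{1-\epsilon/2}{2},\frac{1-\epsilon/2}{2})^n$ and $\supp\widehat{\varphi}\subset(-\frac{1}{2},\frac{1}{2})^n$, where $\epsilon$ is the constant from property~\ref{7}, and set $\widehat{\varphi}_j^i=\widehat{\varphi}\circ T_{j,i}$ for $j\geq j_0$, with arbitrary smooth bumps supported in $A_j^i$ for $j<j_0$. Lemma~\ref{equiv of B} then yields $\sup_{j,i}\|\varphi_j^i\|_{L^1}<\infty$. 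Convexity of the parallelepipeds $A_j^i$ gives $\supp\widehat{\psi}_j^i\subset A_j^i+A_j^i=2A_j^i$, matching the pair for $2\Omega$, while Plancherel and an affine change of variables yield $\|\psi_j^i\|_{L^1}=a^j\|\widehat{\varphi}_j^i\|_{L^2}^2=a^j m(A_j^i)\|\widehat{\varphi}\|_{L^2}^2\approx 1$ uniformly in $(j,i)$.

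The driving tool is the explicit identity obtained from the substitution $w=z-x/2$ in the convolution defining $\widehat{\varphi}_j^i\ast\widehat{\varphi}_j^i$:
\[ \widehat{\psi}_j^i(x)=a^j m(A_j^i)\,(\widehat{\varphi}\ast\widehat{\varphi})(2T_{j,i}^{(2)}(x)),\qquad T_{j,i}^{(2)}(x):=T_{j,i}(x/2), \]
where $T_{j,i}^{(2)}$ is the natural affine bijection sending $2A_j^i$ onto $(-\frac{1}{2},\frac{1}{2})^n$. Since $\widehat{\varphi}\ast\widehat{\varphi}$ pointwise dominates the self-convolution of $\chi_{(-(1-\epsilon/2)/2,(1-\epsilon/2)/2)^n}$, it is bounded below by some $c>0$ on $(-(1-\epsilon),1-\epsilon)^n$. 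Consequently $\widehat{\psi}_j^i\geq c'>0$ on the preimage under $T_{j,i}^{(2)}$ of $(-\frac{1-\epsilon}{2},\frac{1-\epsilon}{2})^n$, a set which by property~\ref{7} contains $T_{j,i}^{(2)}(2E_j^i)=T_{j,i}(E_j^i)\subset(-\frac{1}{2}+\epsilon,\frac{1}{2}-\epsilon)^n$.

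The upper bound $\sum a^{-qjs}\|f\ast\psi_j^i\|_{L^p}^q\lesssim\|f\|_{B_{p,q}^s(2\Omega)}^q$ is then routine. I would fix any $(\widetilde{\varphi}_k^l)\in\mathscr{P}(\{2A_k^l\})$ and expand
\[ f\ast\psi_j^i=\sum_{(k,l):\,2A_k^l\cap 2A_j^i\neq\emptyset}(f\ast\widetilde{\varphi}_k^l)\ast\psi_j^i, \]
then apply Young's inequality, $\sup\|\psi_j^i\|_{L^1}<\infty$, the bounded overlap~\ref{4} and the scale comparison~\ref{5}, exactly as in Lemma~\ref{Besov properties}(1).

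For the reverse bound, I would rerun the construction of Proposition~\ref{exist} with an auxiliary bump $\widehat{\varphi}'$ equal to $1$ on $(-\frac{1}{2}+\epsilon,\frac{1}{2}-\epsilon)^n$ and supported in $(-\frac{1-\epsilon}{2},\frac{1-\epsilon}{2})^n$, producing a partition of unity $(\widetilde{\varphi}_j^i)\in\mathscr{P}(\{2A_j^i\})$ whose Fourier supports lie inside the region of positivity identified above. Setting $\widehat{\eta}_j^i:=\widehat{\widetilde{\varphi}}_j^i/\widehat{\psi}_j^i$ (extended by zero), a chain-rule computation shows that $\widehat{\eta}_j^i\circ(T_{j,i}^{(2)})^{-1}$ has partial derivatives of order $\gamma\in\{0,1\}^n$ bounded uniformly in $(j,i)$: the numerator becomes a fixed function, $a^j m(A_j^i)\approx 1$, and $1/(\widehat{\varphi}\ast\widehat{\varphi})$ is smooth with uniform bounds on the relevant central cube. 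Lemma~\ref{equiv of B} then gives $\sup_{j,i}\|\eta_j^i\|_{L^1}<\infty$, and from $f\ast\widetilde{\varphi}_j^i=(f\ast\psi_j^i)\ast\eta_j^i$ we conclude $\|f\ast\widetilde{\varphi}_j^i\|_{L^p}\lesssim\|f\ast\psi_j^i\|_{L^p}$; summation in $(j,i)$ closes the argument. The principal difficulty is precisely this last step: the two bumps $\widehat{\varphi}$ and $\widehat{\varphi}'$ must be calibrated so that the Fourier supports of the partition elements sit strictly inside the region of uniform positivity of $\widehat{\psi}_j^i$, permitting the application of Lemma~\ref{equiv of B} through the highly elongated affine maps $T_{j,i}^{(2)}$.
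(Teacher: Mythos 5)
Your proof is correct and mirrors the paper's argument closely: both use the same explicit pullback identity $\widehat{\psi}_j^i(x) = a^j m(A_j^i)(\widehat{\varphi}*\widehat{\varphi})(2T_{j,i}(x/2))$, the same positivity bound coming from the self-convolution of the bump on an inner cube, and the same use of Lemma~\ref{equiv of B} to obtain uniform $L^1$-control. The only difference is cosmetic and concerns the reverse bound: the paper normalizes globally by $\psi = \sum_{j,i}\widehat{\psi}_j^i$ (so that $\widehat{f}_j^i = \widehat{\psi}_j^i/\psi$ forms a partition of unity in $\mathscr{P}(\{2A_j^i\})$) and extracts the inverse kernel $\widehat{h}_j^i = \widehat{f}_j^i/\psi$, whereas you introduce a calibrated auxiliary partition $(\widetilde{\varphi}_j^i)$ with Fourier supports tucked inside the positivity region of $\widehat{\psi}_j^i$ and divide by $\widehat{\psi}_j^i$ directly; both produce the same $L^1$-bounded inverse and yield the lemma.
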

	\begin{proof}
		Let $\varphi$ and $\varphi_{j}^{i}$, be defined as in the proof of Proposition \ref{exist}, and set $\psi=\sum_{j,i}\widehat{\psi}_{j}^{i}=\sum_{j,i}a^{-j}\widehat{\varphi}_{j}^{i}\ast \widehat{\varphi}_{j}^{i}(x)$.  The properties of $T_{j,i}$ ensure that $|\det T_{j,i}|^{-1}=m(A_{j}^{i})\approx a^j$, and thus
		$$\widehat{\varphi}_{j}^{i}\ast \widehat{\varphi}_{j}^{i}(x)=|\det T_{j,i}|^{-1}\widehat{\varphi}\ast\widehat{\varphi}\left(2T_{j,i}(x/2)\right)\approx a^{j} \widehat{\varphi}\ast\widehat{\varphi}\left(2T_{j,i}(x/2)\right).$$ 
		Combined with property \ref{5}, it follows that $\psi$ is bounded from above. We also see that $\psi\gtrsim \epsilon^{-n}>0$ on $\Omega$; since $\widehat{\varphi}\geq \chi_{(-\frac{1-\epsilon}{2},\frac{1-\epsilon}{2})^{n}}$, a simple computation gives that 
		\[
		\widehat{\varphi}\ast\widehat{\varphi}(x)\geq (1-\epsilon-|x_{1}|)...(1-\epsilon-|x_{n}|)\geq \epsilon^{n}, \qquad x\in 2(-\frac{1}{2}+\epsilon,\frac{1}{2}-\epsilon)^{n}.
		\]
		We now consider $(f_j^i)$ given by $\widehat{f}_{j}^{i}=\dfrac{\widehat{\psi}_{j}^{i}}{\psi}$. The same argument as in Proposition~\ref{exist} shows that $(f_j^i) \in \mathscr{P}(\{2A^i_j\})$; it suffices to let $2T_{j,i}(\frac{\cdot}{2})$ take the role of $T_{j,i}$. By Lemma~\ref{Besov properties}, it therefore suffices to prove that $(\psi_{j}^{i})$ and $(f_{j}^{i})$ generate the same Besov space.
		
		In one direction we use that $\sup_{j.i} \|\psi^i_j\|_{L^1} < \infty$, and thus
		$$\|f\ast\psi_{j}^{i}\|_{L^{p}}=\|f\ast\psi_{j}^{i}\ast(\sum_{(k,l)\in J_{j,i}}f_{k}^{l})\|_{L^{p}} \leq K \sum_{(k,l)\in J_{j,i}}\|f\ast f_{k}^{l}\|_{L^{p}}.$$ 
		Arguing as in the proof of the first part of Lemma \ref{Besov properties}, we conclude that  
		$$\left(\sum_{i,j}a^{qjs}\|f\ast\psi^{i}_{j}\|^{q}_{L^{p}}\right)^{\frac{1}{q}} \leq C_2 \|f\|_{B_{p,q}^{s}(\Omega)}.$$ 
		For the other inclusion, define $(h^i_j)$ by $\widehat{h}^i_j = \dfrac{\widehat{f}_{j}^{i}}{\psi}$. Just as for $f^i_j$, the argument of Proposition~\ref{exist} again shows that $\sup_{j,i} \|h_j^i\|_{L^1} < \infty$. Since $\widehat{\psi}_{j}^{i}=\sum_{(k,l)\in J_{j,i}}\widehat{\psi}_{j}^{i}\widehat{f}_{k}^{l}$, we conclude that
		$$\|f\ast f_{j}^{i}\|_{L^{p}}=\|f\ast\mathscr{F}^{-1}( \widehat{\psi}_{j}^{i}/\psi)\|_{L^{p}}\leq\sum_{(k,l)\in J_{j,i}}\|f\ast\psi_{j}^{i}\ast h_k^l\|_{L^{p}}\leq K \|f\ast\psi_{j}^{i}\|_{L^{p}},$$ 
		which completes the proof.
	\end{proof}
	
	The following lemma is essentially \cite[Lemma 3]{MR837521}, adjusted to our setting.
	\begin{lm}\label{Timotin lemma}
		Let $\alpha,\alpha'\in L^{\infty}(\mathbb{R}^{n})$, with $\supp \alpha \subset K_1$ and $\supp \alpha' \subset K_2$, where $K_1$ and $K_2$ are compact, let $A \in L^2(\mathbb{R}^{2n})$ be a kernel, and let 
		$$\widehat{b}(u)=\int_{\mathbb{R}^{n}}A(u-x,x)\alpha(u-x)\alpha'(x) \, dx.$$ 
		Then 
		$$\|b\|_{L^{1}}\leq \|\alpha\|_{L^{\infty}}\|\alpha'\|_{L^{\infty}}\| \chi_{K_2} A\chi_{K_1}\|_{S^{1}}.$$
	\end{lm}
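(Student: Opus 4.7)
The key observation is that although the kernel $A$ itself need not be trace class, the truncated operator $T = \chi_{K_2} A \chi_{K_1}$ is in $S^1$ by hypothesis, and the supports of $\alpha$ and $\alpha'$ confine the integrand defining $\widehat{b}$ precisely to the region where $A$ and $T$ agree. The plan is therefore to exploit a Schmidt decomposition of $T$ in order to reduce the estimate to a routine convolution/Plancherel calculation.

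Concretely, I would write $T = \sum_k \sigma_k \, u_k \otimes v_k$ with $(u_k)$ and $(v_k)$ orthonormal systems supported in the appropriate truncation sets and $\sum_k \sigma_k = \|T\|_{S^1}$. As kernels this means that $A(u-x, x) = \sum_k \sigma_k \, u_k(u-x)\, \overline{v_k(x)}$ on the support of $\alpha(u-x)\alpha'(x)$, and substitution into the defining integral for $\widehat{b}$ gives
\[
\widehat{b}(u) = \sum_k \sigma_k \int [\alpha u_k](u-x)\, [\alpha' \overline{v_k}](x)\, dx = \sum_k \sigma_k \, (\alpha u_k) \ast (\alpha' \overline{v_k})(u).
\]
Taking Fourier transforms converts each convolution into a product, so that
\[
b(\xi) = \sum_k \sigma_k \, \mathcal{F}^{-1}(\alpha u_k)(\xi)\, \mathcal{F}^{-1}(\alpha' \overline{v_k})(\xi).
\]

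The $L^{1}$-bound then follows from Cauchy--Schwarz and Plancherel, applied termwise:
\[
\|b\|_{L^{1}} \leq \sum_k \sigma_k \, \|\mathcal{F}^{-1}(\alpha u_k)\|_{L^{2}} \|\mathcal{F}^{-1}(\alpha' \overline{v_k})\|_{L^{2}} = \sum_k \sigma_k \, \|\alpha u_k\|_{L^{2}} \|\alpha' \overline{v_k}\|_{L^{2}},
\]
and bounding $\|\alpha u_k\|_{L^2} \leq \|\alpha\|_{L^\infty} \|u_k\|_{L^2} = \|\alpha\|_{L^\infty}$ (and similarly for the other factor) yields exactly $\|\alpha\|_{L^{\infty}}\|\alpha'\|_{L^{\infty}}\|T\|_{S^{1}}$, as required. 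There is no genuine obstacle: convergence of the Schmidt series in the Hilbert--Schmidt (hence $L^{2}$-kernel) norm, which is automatic from $T \in S^1 \subset S^2$, is more than enough to justify the termwise interchange with the integrals and Fourier transforms above.
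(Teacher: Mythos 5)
Your proof is correct and takes essentially the same route as the paper's. The paper's one-line reduction "by linearity we may assume $A$ is rank one" is exactly the Schmidt decomposition plus termwise summation that you have spelled out explicitly, and both arguments then hinge on the identical observation that $\widehat{b}$ becomes a convolution, so that Plancherel and Cauchy--Schwarz deliver the bound.
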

	\begin{proof}
		We may assume that $A$ is supported in $K_1 \times K_2$, and, since $b$ depends linearly on $A$, that $A$ is of rank $1$, $A(x,y)=f(x)g(y)$. Then $\widehat{b} = (\alpha f)\ast(\alpha' g)$, and thus 
		\begin{equation*}
			\|b\|_{L^{1}} \leq\|\alpha f\|_{L^{2}}\|\alpha' g\|_{L^{2}} \leq  \|\alpha\|_{L^{\infty}}\|\alpha'\|_{L^{\infty}}\| f\|_{L^{2}}\| g\|_{L^{2}} 
			= \|\alpha\|_{L^{\infty}}\|\alpha'\|_{L^{\infty}}\|A\|_{S^{1}}. \qedhere
		\end{equation*}
	\end{proof} 
	We now prove Theorem~\ref{thm:mainnec}, stated more precisely than in the introduction.
	\begin{theo}\label{converse}
		Let $\Omega$ be an admissible subset of $\mathbb{R}^{n}$ with decomposition $\{A_{j}^{i}\}$, equip $2\Omega$ with the decomposition $\{2A_{j}^{i}\}$, and let $1 \leq p \leq \infty$. Suppose that $\Ha^{\sigma,\tau}_{\varphi}\in S^{p}(\PW(\Omega))$ for some $\sigma,\tau\in\mathbb{R}$. Then $\varphi\in B_{p,p}^{\sigma+\tau+\frac{1}{p}}(2\Omega)$, and there exists a constant $C > 0$ such that \[\|\varphi\|_{B_{p,p}^{\sigma+\tau+\frac{1}{p}}(2\Omega)}\leq C \|\Ha_{\varphi}\|_{S^{p}}.\]
	\end{theo}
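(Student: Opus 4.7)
The plan is to reduce, via Lemma \ref{besov on 2}, to bounding $\|\varphi \ast \psi^i_j\|_{L^p}$ for each $(j,i)$, where $\psi^i_j = a^j(\varphi^i_j)^2$ and $\widehat{\varphi}^i_j$ is supported in $A^i_j$ with $\|\widehat{\varphi}^i_j\|_\infty \lesssim 1$. The key step is to establish the local estimate
\[
\|\varphi \ast \psi^i_j\|_{L^p} \lesssim a^{j(\sigma+\tau+1/p)} \|\chi_{A^i_j} \Ha^{\sigma,\tau}_\varphi \chi_{A^i_j}\|_{S^p},
\]
where $\chi_{A^i_j}$ denotes the Fourier-side projection onto $A^i_j$, and then to sum over $(j,i)$.

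For the local estimate at $p=1$, I would apply Lemma \ref{Timotin lemma} to the Fourier-side kernel $A(x,y) = \widehat{\varphi}(x+y)\omega_{\frac{1}{2}\Omega}^{\sigma}(x)\omega_{\frac{1}{2}\Omega}^{\tau}(y)$ of $\Ha^{\sigma,\tau}_\varphi$, with $K_1 = K_2 = A^i_j$, $\alpha(y) = \omega_{\frac{1}{2}\Omega}^{-\sigma}(y)\widehat{\varphi}^i_j(y)$, and $\alpha'(y) = \omega_{\frac{1}{2}\Omega}^{-\tau}(y)\widehat{\varphi}^i_j(y)$. The weights cancel inside the integral defining $b$, so $\widehat{b}(u) = \widehat{\varphi}(u)(\widehat{\varphi}^i_j \ast \widehat{\varphi}^i_j)(u) = a^{-j}\widehat{\varphi \ast \psi^i_j}(u)$. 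Since $\omega_{\frac{1}{2}\Omega} \approx a^{-j}$ on $A^i_j$, we have $\|\alpha\|_\infty \lesssim a^{j\sigma}$ and $\|\alpha'\|_\infty \lesssim a^{j\tau}$, yielding the $p = 1$ case. For $p = \infty$, take $\widehat{f}_x(y) = \omega_{\frac{1}{2}\Omega}^{-\tau}(y)\widehat{\varphi}^i_j(y) e^{2\pi i x \cdot y}$ and $\widehat{g}_x(y) = \omega_{\frac{1}{2}\Omega}^{-\sigma}(y)\widehat{\varphi}^i_j(y) e^{2\pi i x \cdot y}$, both Fourier-supported in $A^i_j \subset \Omega$; a direct calculation followed by the substitution $u = x + y$ shows $\varphi \ast \psi^i_j(x) = a^j \langle \Ha^{\sigma,\tau}_\varphi f_x, g_x\rangle$, and $\|f_x\|_{L^2} \|g_x\|_{L^2} \approx a^{j(\sigma+\tau)-j}$ supplies the pointwise estimate. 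For $1 < p < \infty$, a standard complex interpolation of the trilinear form $(\alpha, \alpha', A) \mapsto b$ between the endpoint bounds produces $\|b\|_{L^p} \lesssim \|\alpha\|_{L^{2p'}}\|\alpha'\|_{L^{2p'}}\|A\|_{S^p}$, which, combined with the support estimates $\|\alpha\|_{L^{2p'}} \lesssim a^{j\sigma - j/(2p')}$ and $\|\alpha'\|_{L^{2p'}} \lesssim a^{j\tau - j/(2p')}$, recovers the local inequality for all intermediate $p$.

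To pass from the local estimate to the Besov norm, raise to the $p$-th power and sum. By property \ref{4}, the sets $\{A^i_j\}$ have uniformly bounded overlap, so a graph-coloring argument partitions the index set into a fixed finite number of subfamilies on which the $A^i_j$ are pairwise disjoint. On each subfamily, $\sum \chi_{A^i_j} \Ha^{\sigma,\tau}_\varphi \chi_{A^i_j}$ is a block-diagonal compression of $\Ha^{\sigma,\tau}_\varphi$, and the block-diagonal conditional expectation is contractive on $S^p$ for every $1 \leq p \leq \infty$; hence $\sum \|\chi_{A^i_j} \Ha^{\sigma,\tau}_\varphi \chi_{A^i_j}\|_{S^p}^p \leq \|\Ha^{\sigma,\tau}_\varphi\|_{S^p}^p$ on each subfamily, and summing over the finite number of subfamilies completes the estimate. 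For $p = \infty$ the supremum replaces the sum and the pointwise bound suffices. The main obstacle is the interpolation step producing the local estimate for intermediate $p$; a cleaner alternative, once the endpoints $p = 1$ and $p = \infty$ are in place, is to interpolate the overall inequality $\|\varphi\|_{B_{p,p}^{\sigma+\tau+1/p}(2\Omega)} \lesssim \|\Ha^{\sigma,\tau}_\varphi\|_{S^p}$ via the Schatten identity $[S^1, S^\infty]_\theta = S^p$ combined with the Besov interpolation in Lemma \ref{Besov properties}(2), which handles $1 < p < \infty$ abstractly.
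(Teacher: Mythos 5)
Your primary argument follows essentially the same route as the paper: reduce via Lemma~\ref{besov on 2} to the local estimate $\|\varphi\ast\psi^i_j\|_{L^p}\lesssim a^{j(\sigma+\tau+1/p)}\|P^i_j\Ha^{\sigma,\tau}_\varphi P^i_j\|_{S^p}$, prove it at $p=1$ by applying Lemma~\ref{Timotin lemma} with the same choice of $\alpha,\alpha'$ up to normalization, prove it at $p=\infty$ by testing against modulated trial functions supported in $A^i_j$, interpolate for $1<p<\infty$, and sum over $(j,i)$ using the bounded overlap from property~\ref{4}. Your graph-coloring and block-diagonal contraction spell out what the paper leaves implicit in the line $\|\Ha^{\sigma,\tau}_\varphi\|_{S^p}^p\geq C\sum_{j,i}\|P^i_j\Ha^{\sigma,\tau}_\varphi P^i_j\|_{S^p}^p$, and your trilinear interpolation of $(\alpha,\alpha',A)\mapsto b$ is a slightly more elaborate route to the same estimate; it suffices (and is what the paper intends) to fix $\alpha,\alpha'$ and interpolate the single linear map $A\mapsto b$ between $S^1\to L^1$ and $S^\infty\to L^\infty$, noting that the $p=\infty$ endpoint $\|b\|_{L^\infty}\leq\|\alpha\|_{L^2}\|\alpha'\|_{L^2}\|A\|$ holds for arbitrary kernels $A$, not just Hankel ones. (There is a sign flip in your trial function $\widehat g_x$; the output test function needs phase $e^{-2\pi i x\cdot y}$ so that the conjugate produces $e^{2\pi i x\cdot(u+v)}$, but this is cosmetic.)

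The one step that does not hold up is the ``cleaner alternative.'' You cannot get $\|\varphi\|_{B^{\sigma+\tau+1/p}_{p,p}}\lesssim\|\Ha^{\sigma,\tau}_\varphi\|_{S^p}$ for intermediate $p$ by interpolating the two endpoint inequalities via $[S^1,S^\infty]_\theta=S^p$ and Lemma~\ref{Besov properties}(2). That endpoint inequality is a \emph{lower} bound on the linear map $\varphi\mapsto\Ha^{\sigma,\tau}_\varphi$, and interpolation theory transfers upper bounds, not lower bounds. Recasting it as an upper bound on the operator-to-symbol map $\Ha^{\sigma,\tau}_\varphi\mapsto\varphi$ runs into the problem that this map is only defined on the Hankel subspaces of $S^1$ and $S^\infty$, and the endpoint estimates give no information about whether the complex interpolation space of those two subspaces coincides with the Hankel subspace of $S^p$; this subspace-interpolation question is nontrivial and is not resolved by the endpoint bounds alone. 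The fixed-index local estimate is exactly what circumvents this: the map $A\mapsto b$ is defined and bounded on all of $S^p(L^2(A^i_j))$, so ordinary interpolation applies.
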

	
	\begin{proof}
		Letting $P^{i}_{j}$ be the projection of $L^{2}(\mathbb{R}^n)$ onto $\PW(A^{i}_{j})$, we have by property \ref{5} that $$\|\Ha_{\varphi}^{\sigma,\tau}\|_{S^{p}}^{p}\geq C\sum_{j,i}\|P^{i}_{j}\Ha_{\varphi}^{\sigma,\tau}P^{i}_{j}\|_{S^{p}}^{p}.$$
		With $\varphi_{j}^{i}$ and $\psi_{j}^{i}$ as in Lemma \ref{besov on 2},
		it suffices to prove that 
		\begin{equation} \label{eq:necineq}
			\|P_{j}^{i}\Ha_{\varphi}^{\sigma,\tau}P_{j}^{i}\|_{S^{p}}\geq Ca^{j(\sigma+\tau+\frac{1}{p})}\|\varphi\ast\psi_{j}^{i}\|_{L^{p}}.
		\end{equation}
		We prove this inequality by using interpolation. 
		
		For $p=1$,  \eqref{eq:necineq}  falls out of Lemma~\ref{Timotin lemma}, applied with \[A(x,y)=\widehat{\varphi}(x+y)\omega_{\Omega}^{\sigma}(x)\omega_{\Omega}^{\tau}(y),
		\]
		$\alpha(x)=a^{j\sigma}\widehat{\varphi}_{j}^{i}(x)\omega_{\Omega}^{-\sigma}(x),$ and $\alpha'(x)=a^{j\tau}\widehat{\varphi}_{j}^{i}(x)\omega_{\Omega}^{-\tau}(x)$.
		
		To treat the case $p=\infty$, we define 
		$$\widehat{f}_{j,i}^{z,\sigma}(x)=e^{2\pi ixz}\widehat{\varphi}_{j}^{i}(x)\omega_{\Omega}^{\sigma}(x), \qquad z \in \mathbb{R}^n.$$
		Then we get that
		\begin{eqnarray}
			\|P^{i}_{j}\Ha_{\varphi}^{\sigma,\tau}P^{i}_{j}\|&\geq& \dfrac{|\langle P^{i}_{j}\Ha_{\varphi}^{\sigma,\tau}P^{i}_{j}(f_{j,i}^{z,-\sigma}),f_{j,i}^{-z,-\tau}\rangle|}{\|f_{j,i}^{z,-\sigma}\|_{L^{2}}\|f_{j,i}^{-z,-\tau}\|_{L^{2}}} \nonumber \\
			&\geq& C a^{j(\sigma+\tau-1)}|\langle P^{i}_{j}\Ha_{\varphi}^{\sigma,\tau}P^{i}_{j}(f_{j,i}^{z,-\sigma}),f_{j,i}^{-z,-\tau}\rangle|\nonumber \\
			&=& C a^{j(\sigma+\tau-1)}\left|\iint\widehat{\varphi}(x+y)\widehat{\varphi}^{i}_{j}(x)\widehat{\varphi}^{i}_{j}(y)e^{2\pi i(x+y)z}dx \, dy\right|.\nonumber 
		\end{eqnarray}
		Here we make the change of variables $x+y\to y$ and $x\to x$ to obtain that
		\begin{eqnarray}
			\|P^{i}_{j}\Ha_{\varphi}^{\sigma,\tau}P^{i}_{j}\| &\geq &C a^{j(\sigma+\tau-1)}\left|\iint\widehat{\varphi}(y)\widehat{\varphi}^{i}_{j}(x)\widehat{\varphi}^{i}_{j}(y-x)e^{2\pi iyz}dxdy\right|\nonumber \\
			&=& Ca^{j(\sigma+\tau)}|\varphi\ast\psi_{j}^{i}(z)|, \qquad z \in \mathbb{R}^n. \nonumber
		\end{eqnarray}
		This is \eqref{eq:necineq} for $p=\infty$.
	\end{proof}
	Corollary \ref{1 to 2 suf} and Theorem \ref{converse} immediately yield the following characterization in the regime $1 \leq p \leq 2$, which is Corollary~\ref{cor:mainchar}. 
	\begin{cor}\label{cor 1 to 2}
		Let $\Omega$ be an admissible subset of $\mathbb{R}^{n}$ with decomposition $\{A_{j}^{i}\}$, and equip $2\Omega$ with the decomposition $\{2A_{j}^{i}\}$. Then, for $1\leq p\leq 2$ and $\sigma,\tau>-\frac{1}{p(n+1)}$ it holds true that
		$$\varphi\in B_{p,p}^{\sigma+\tau+\frac{1}{p}}(2\Omega) \textrm{ if and only if } \Ha_{\varphi}^{\sigma,\tau}\in S^{p}(\PW(\Omega)).$$ If $\Omega$ is a polyhedron the same holds true for all $\sigma,\tau>-\frac{1}{2p}$.
	\end{cor}
	
	It was recently shown \cite{bampouras2023failure} that the Nehari theorem fails for all convex bounded sets $\Omega$ which are not polytopes. That is, there are bounded Hankel operators $\Ha_{\varphi}\in S^{\infty}(\PW(\Omega))$ for which there exists no $\psi \in L^\infty(\mathbb{R}^n)$ such that $\Ha_{\varphi} = \Ha_\psi$. On the other hand, by a duality argument, it is easy to see that $B_{1,1}^1(2\Omega) \subset L^\infty(\mathbb{R}^n)$. Therefore every symbol $\varphi$ supported in $2\Omega$ which induces a trace class Hankel operator $\Ha_{\varphi}\in S^{1}$ must in fact be bounded, and thus the conclusion of the Nehari theorem holds trivially in this case. It is an interesting question, raised in \cite{BP23}, whether the conclusion of the Nehari theorem holds when $\Ha_{\varphi}\in S^{p}$, $1 < p < \infty$.
	
	\section{Smooth domains in $\mathbb{R}^n$ with positive curvature} \label{sec:smooth}
	In this section we assume throughout that $\Omega \subset \mathbb{R}^n$ is a bounded, convex, $C^{2}$-smooth domain with boundary that has strictly positive curvature. Such domains are strongly convex. By translating, we may assume that $0 \in \Omega$.
	
	To demonstrate that $\Omega$ is admissible, we will essentially show that any reasonable choice of parallelepipeds $\{A^i_j\}$ respecting properties \ref{1}-\ref{3} will automatically satisfy properties \ref{4} and \ref{5} as well. We will then extend the parameter range for the Hilbert--Schmidt condition, apply Theorem~\ref{Sinfty2} to obtain a boundedness result for extended Hankel operators, and interpolate, in order to prove Theorem~\ref{thm:mainsmooth}.
	
	We start by giving an approximation of $\omega_\Omega$, based on the simple computation that for the $n$-dimensional unit ball $B_n$,
	\begin{equation} \label{eq:omegadisc}
		\omega_{B_n}(x)\approx \left(1-|x|\right)^{\frac{n+1}{2}}, \qquad x \in B_n.
	\end{equation}
	\begin{lm}\label{omegaconvex}
		It holds that
		$$\omega_{\Omega}(x)\approx (\dist(x,\partial\Omega))^{\frac{n+1}{2}}, \qquad x\in \Omega.$$
	\end{lm}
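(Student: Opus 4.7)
The plan is to translate the statement into a localized claim at the boundary and exploit the $C^{2}$-smoothness together with strict positive curvature to reduce to a model quadratic computation.

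Set $w = z/2$. Since $2\Omega$ is convex with $C^{2}$ boundary, $\dist(z, 2\partial\Omega) = 2\dist(w, \partial\Omega)$, and $\Omega \cap (z - \Omega) = \Omega \cap (2w - \Omega)$. Thus it suffices to prove $m(\Omega \cap (2w - \Omega)) \approx d(w)^{3/2}$, where $d(w) = \dist(w, \partial\Omega)$. For $w$ in the interior with $d(w) \geq \delta_{0}$ (any fixed $\delta_{0} > 0$), the set $2w - \Omega$ contains a ball around $w$ lying in $\Omega$, so both sides are bounded above and below by constants and there is nothing to show. Therefore the core case is $w$ near $\partial\Omega$.

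For such $w$, fix a nearest boundary point $p \in \partial\Omega$ and work in local coordinates centered at $p$ with the inward normal as the positive $y$-axis; then $\partial\Omega$ is locally the graph $y = f(x)$ with $f(0) = f'(0) = 0$ and $f''(0) = \kappa_{p} > 0$, while $w = (0, d)$ up to an $O(d)$ correction. The decisive geometric observation is the reflection pairing: if $u \in \Omega \cap (2w - \Omega)$, then $2w - u$ lies there too, and $w$ is the midpoint of the pair. Writing $u = (x,y)$, membership becomes $f(x) < y < 2d - f(-x)$, which forces $f(x) + f(-x) < 2d$. By $C^{2}$ smoothness and positive curvature, $f(x) + f(-x) = \kappa_{p} x^{2} + o(x^{2})$, so $|x| \lesssim \sqrt{d}$ uniformly, and the $y$-slice has length $\leq 2d$. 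Integrating in $x$ yields the upper bound $m(\Omega \cap (2w - \Omega)) \lesssim d^{3/2}$. The matching lower bound is obtained by restricting to $|x| \leq c\sqrt{d}$ for a small $c$ depending on $\kappa_{p}$, where the $y$-slice has length $\geq c' d$, giving area $\gtrsim d^{3/2}$.

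Uniformity of the implicit constants over $w$ comes from compactness of $\partial\Omega$: the curvature is bounded between two positive constants, and by $C^{2}$ regularity the remainder in the Taylor expansion of $f$ is controlled uniformly in $p$. The only mildly delicate point is confirming that contributions from outside this local coordinate patch are absorbed, which follows from the same pairing argument — points of the intersection must lie in an $O(\sqrt{d}) \times O(d)$ strip near $p$, hence cannot escape any fixed neighborhood of $p$ when $d$ is small. This, combined with the interior case treated above, gives the claimed two-sided estimate for all $z \in 2\Omega$.
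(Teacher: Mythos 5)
Your proof is correct, and it takes a genuinely different route from the paper's. The paper invokes the Blaschke rolling property: at the nearest boundary point it places an inscribed disc $B_{1}$ of radius $1/K$ and a circumscribed disc $B_{2}$ of radius $1/k$ (with $K$, $k$ the extremal curvatures), uses the monotonicity $B_{1}\subset\Omega\subset B_{2}\Rightarrow\omega_{B_{1}}\lesssim\omega_{\Omega}\lesssim\omega_{B_{2}}$, and transfers the known disc asymptotic $\omega_{\mathbb{D}}(x)\approx(2-|x|)^{3/2}$ to conclude. Your approach instead computes $m(\Omega\cap(2w-\Omega))$ directly in local graph coordinates $y=f(x)$ with $f(0)=f'(0)=0$, $f''(0)=\kappa_{p}$, reading off the width $|x|\lesssim\sqrt{d}$ from $f(x)+f(-x)<2d$ and the slice length $2d-f(x)-f(-x)\in[c\,d,2d]$, and integrating. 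What the paper's argument buys is that localization is automatic: the sandwich $B_{1}\subset\Omega\subset B_{2}$ holds globally, so no separate argument is needed to confine $\Omega\cap(2w-\Omega)$ near $p$. What your argument buys is self-containedness: you never need to quote \eqref{eq:omegadisc} or the rolling-disc fact. The one spot where your write-up is thinner than it should be is precisely the localization step. Saying that "points of the intersection must lie in an $O(\sqrt{d})\times O(d)$ strip near $p$" is the conclusion you want, and it does not follow from the pairing symmetry alone; to close it, observe that the tangent line $\ell$ at $p$ is a supporting line for $\Omega$, so $\Omega\subset\{y>0\}$ and hence $2w-\Omega\subset\{y<2d\}$, which traps $\Omega\cap(2w-\Omega)$ in the slab $\{0<y<2d\}$; then convexity of $x\mapsto g_{-}(x):=\inf\{y:(x,y)\in\Omega\}$ together with $g_{-}(\pm\delta_{1})\geq\delta_{2}>0$ (uniform by compactness and positive curvature) shows that for $d$ small this slab meets $\Omega$ only inside the graph patch, after which your estimate $|x|\lesssim\sqrt{d}$ applies. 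With that sentence added, the argument is complete and the constants are uniform for the reasons you give.
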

	\begin{proof}
		By hypothesis, the principal curvatures of $\partial \Omega$ reach a minimum $k>0$ and a maximum $K>0$. Given $z\in\partial \Omega$, the balls with radii $\frac{1}{K}$ and $\frac{1}{k}$, tangent to $z$, are inscribed and circumscribed, respectively.
		
		Let $x\in\Omega$ be such that $\dist(x,\partial \Omega)<\frac{1}{K}$, and let $z\in\partial\Omega$ be such that $|x-z|=\dist(x,\partial \Omega)$.  Denote the inscribed and circumscribed balls at $z$ of radii $\frac{1}{K}$ and $\frac{1}{k}$ by $B_{1}$ and $B_{2}$, respectively, and denote their centers by $y_{1}$ and $y_{2}$.  Since $z,y_{1},y_{2},x$ all lie on the line orthogonal to $\partial \Omega$ at $z$, we have that $$\dist(x,\partial\Omega)=\frac{1}{K}-|x-y_{1}|=\frac{1}{k}-|x-y_{2}|,$$
		and therefore, by \eqref{eq:omegadisc},
		$$\omega_{B_1}(x) \approx \big(\frac{1}{K}-|y_{1}-x|\big)^{\frac{n+1}{2}} = \dist(x,\partial\Omega)^{\frac{n+1}{2}} = \big(\frac{1}{k}-|y_{2}-x|\big)^{\frac{n+1}{2}} \approx \omega_{B_2}(x).$$
		Since $B_{1}\subset \Omega\subset B_{2}$, we have that
		$\omega_{B_{1}}\leq \omega_{\Omega}\leq\omega_{B_{2}}$, and thus $\omega_{\Omega}(x) \approx \dist(x,\partial\Omega)^{\frac{n+1}{2}}$.
	\end{proof}
	\begin{lm}\label{conection to disc}
		Let $\lambda \colon \mathbb{R}^n \to [0,\infty)$ be the gauge function such that $$x\in\lambda(x)\partial\Omega, \qquad x \in \mathbb{R}^n.$$ 
		Then
		$$\dist(x,\partial\Omega)\approx 1-\lambda(x),\qquad x\in\Omega,$$
		and therefore $\omega_{\Omega}(x)\approx (1-\lambda(x))^{\frac{n+1}{2}}.$
	\end{lm}
	\begin{proof}
		The upper bound is trivial since $$\dist(x,\partial\Omega)\leq |x-\frac{x}{\lambda(x)}|=\frac{|x|}{\lambda(x)}(1-\lambda(x)) \leq \max_{y\in\partial\Omega}|y|(1-\lambda(x)).$$ For the converse, let us assume for contradiction that for every $j\geq 1$, we can find $x_j\in\Omega$, $y_j\in\partial\Omega$ such that $|x_j-y_j|\leq \frac{1}{j}(1-\lambda(x_j))$, and thus $|\frac{\lambda(y_j)-\lambda(x_j)}{y_j-x_j}|\geq j$ (since $\lambda(y_j)=1)$. Since $\Omega$ has a $C^2$ boundary, $\lambda \in C^2(\mathbb{R}^n \setminus\{0\})$, and thus by the mean value theorem we get contradiction.
	\end{proof}
	We also need to clarify the behavior of $\omega_{\Omega}$ with respect to addition.
	\begin{prop}\label{thesum}
		For $x,y\in \Omega$, let $\theta(x,y)$ be the positive angle between the vectors $x$ and $y$. Then it holds that
		$$\omega_{\Omega}\big(\dfrac{x+y}{2}\big)\approx \max(\omega_{\Omega}(x),\omega_{\Omega}(y),\theta(x,y)^{n+1}).$$
	\end{prop}
	\begin{proof}
		We first establish the upper bound. Since $\omega_{\Omega}$ is continuous and non-vanishing in the interior of $\Omega$, it suffices by compactness to consider $\lambda(x),\lambda(y)>\frac{1}{2}$ and $\theta(x,y) < \epsilon$, where $\epsilon$ is chosen sufficiently small to ensure that $|x+y|$ is bounded away from zero. By Lemma \ref{conection to disc}, we can estimate
		\begin{eqnarray}
			\dist\big(\dfrac{x+y}{2},\partial\Omega\big)&\approx& 1-\lambda(\frac{x+y}{2}) \nonumber \\ 
			&\leq& 1-\frac{\lambda(x)+\lambda(y)}{2}+\big|\frac{\lambda(x)+\lambda(y)}{2}-\lambda(\frac{x+y}{2})\big|, \nonumber 
		\end{eqnarray}
		where, since $\lambda \in C^2(\overline{\Omega}\smallsetminus D)$ for every open disc $D$ containing $0$, 
		$$\big|\frac{\lambda(x)+\lambda(y)}{2}-\lambda(\frac{x+y}{2})\big|\lesssim |x-y|^{2}.$$ 
		Furthermore, recalling that $\frac{x}{\lambda(x)}\in\partial\Omega$, we have that $|\frac{x}{\lambda(x)}-\frac{y}{\lambda(y)}|\approx \theta(x,y)$, and thus by the triangular inequality,
		$$|x-y|\lesssim |x-\frac{x}{\lambda(x)}|+|\frac{x}{\lambda(x)}-\frac{y}{\lambda(y)}|+|y-\frac{y}{\lambda(y)}|\approx 1-\lambda(x)+1-\lambda(y)+\theta(x,y).$$
		Therefore we get that
		$$\big|\frac{\lambda(x)+\lambda(y)}{2}-\lambda(\frac{x+y}{2})\big|\lesssim \max((1-\lambda(x))^2,(1-\lambda(y))^2,\theta^2(x,y)).$$
		The upper bound now follows from the estimate
		\begin{eqnarray}
			1-\frac{\lambda(x)+\lambda(y)}{2}&\approx& 1-\left(\frac{\lambda(x)+\lambda(y)}{2}\right)^{2} = 1-\frac{\lambda^2(x)}{4}-\frac{\lambda^{2}(y)}{4}-\frac{\lambda(x)\lambda(y)}{2} \nonumber  \\
			&=& \frac{1-\lambda^{2}(x)}{4}+\frac{1-\lambda^{2}(y)}{4}+\frac{1-\lambda(x)\lambda(y)}{2} \nonumber \\
			&\approx& 1-\lambda(x)+1-\lambda(y). \nonumber 
		\end{eqnarray}
		For the lower bound, the concavity of $\omega^{\frac{1}{n}}$ implies that
		\begin{equation} \label{eq:omegaconcuse}
			\begin{aligned}
				\omega^{\frac{1}{n}}_{\Omega}(\frac{x+y}{2})&\geq \frac{1}{2}\omega^{\frac{1}{n}}_{\Omega}(x)+\frac{1}{2}\omega^{\frac{1}{n}}_{\Omega}(y)\approx (1-\lambda(x))^{\frac{n+1}{2n}}+(1-\lambda(y))^{\frac{n+1}{2n}}  \\
				&\approx \max( 1-\lambda(x),1-\lambda(y))^{\frac{n+1}{2n}}.
			\end{aligned}
		\end{equation}
		To complete the proof it therefore suffices to show that 
		\begin{equation} \label{eq:sumdistcontr} \omega_{\Omega}(\frac{x+y}{2})\gtrsim \theta^{n+1}(x,y),
		\end{equation}
		or, equivalently, $$\dist(\frac{x+y}{2},\partial\Omega)\gtrsim \theta^{2}(x,y).$$
		Note that if we consider $x,y \in\Omega$ such that $\max(1-\lambda(x),1-\lambda(y))\geq K\theta^2(x,y)$, then, by \eqref{eq:omegaconcuse},
		$$\dist\big(\frac{x+y}{2},\partial\Omega\big)\gtrsim K\theta^{2}(x,y).$$
		Therefore, if the lower bound \eqref{eq:sumdistcontr} does not hold, we may find $x_{j}\to x$ and $y_{j}\to y$ such that
		\begin{equation} \label{eq:contr1}
			\dist\big(\frac{x_j+y_j}{2},\partial\Omega\big)\leq \frac{1}{j}\theta^{2}(x_j,y_j)
		\end{equation}
		and
		\begin{equation} \label{eq:contr2}
			\max(1-\lambda(x_j),1-\lambda(y_j)) \leq \frac{1}{j}\theta^{2}(x_j,y_j).
		\end{equation}
		Let $z_{j}\in\partial\Omega$ achieve the minimum distance, 
		\begin{equation} \label{eq:contr3}
			\dist\big(\frac{x_j+y_j}{2},\partial\Omega\big)=\big|\frac{x_j+y_j}{2}-z_j\big|
		\end{equation} 
		and assume, without any loss of generality, that $z_j$ converges to some $z$. Then
		$\frac{x+y}{2}=z$, which by strict convexity implies that $x=y=z$.
		
		By \eqref{eq:contr1} and \eqref{eq:contr3}, we find that
		$$\big|\frac{x_j+y_j}{2} - z_j\big| = o(\theta^2(x_j,y_j)),$$
		and thus
		\begin{equation}\label{eq:contr4}
			|\lambda(\frac{x_j+y_j}{2})- 1|\lesssim o(\theta^2(x_j,y_j)).
		\end{equation}
		
		On the diagonal, the Hessian of the function $f(x,y)=\dfrac{\lambda(x)+\lambda(y)}{2}-\lambda(\dfrac{x+y}{2})$ satisfies $$\langle Hf(x,x)(a,b),(a,b)\rangle=\frac{1}{4}\langle H\lambda(x)(a-b),a-b \rangle, \qquad a, b \in \mathbb{R}^n.$$
		
		Therefore, by Taylor expansion we have that
		\begin{equation}\label{eq:contr5} \dfrac{\lambda(x_j)+\lambda(y_j)}{2}-\lambda(\dfrac{x_j+y_j}{2}) = \frac{1}{8}\langle H\lambda(x)(x_j-y_j),x_j-y_j \rangle + o(|x_j - y_j|^2).
		\end{equation}
		Combining \eqref{eq:contr2},\eqref{eq:contr4} and \eqref{eq:contr5}, we conclude that
		\begin{eqnarray}
			|\langle H\lambda(x)(x_j-y_j),x_j-y_j \rangle|&\approx& |\dfrac{\lambda(x_j)+\lambda(y_j)}{2}-\lambda(\dfrac{x_j+y_j}{2})| \nonumber \\
			&\lesssim& 1-\lambda(x_j)+1-\lambda(y_j)+|1-\lambda(\dfrac{x_j+y_j}{2})|\nonumber \\
			&=& o(\theta^2(x_j,y_j)) =  o(|x_j-y_j|^2). \nonumber
		\end{eqnarray} 
		By compactness, we may assume that $\frac{x_j-y_j}{|x_j-y_j|}\to u\in S^{n-1}$. We have then just shown that
		$$\langle H\lambda(x)u,u \rangle=0.$$
		On the other hand, $u$ is orthogonal to $\nabla \lambda(x)$, since, by first order Taylor expansion and \eqref{eq:contr2}, we have that
		$$|\langle \nabla \lambda(x),x_n-y_n\rangle|\approx |\lambda(x_n)-\lambda(y_n)|=o(\theta^2(x_n,y_n))=o(|x_n-y_n|^2).$$ 
		These two facts are in contradiction; since $\Omega$ is strongly convex, the Hessian $H\lambda(x)$ must be strictly positive definite on the tangent space $T(x)$ at $x \in \partial \Omega$. 
	\end{proof}
	
	\begin{figure}[!h]
		\begin{tikzpicture}[>=Latex, line cap=round, line join=round, scale=1.0]
			
			\def\halfw{1.7}      
			\def\halfh{1.1}      
			\def\gap{1.2}        
			
			\coordinate (z) at (0,0);
			\draw[thick] plot[domain=-4.6:4.6,samples=200] (\x,{-0.08*(\x)^2}); 
			
			\draw[thick] (-5,0) -- (5,0);
			\node[fill=white,inner sep=1pt,anchor=west] at (3.6,0.22) {$T(z)$};
			
			\coordinate (x)  at (0,-\gap-\halfh);
			\coordinate (A)  at ($(x)+(-\halfw,-\halfh)$);
			\coordinate (B)  at ($(x)+(\halfw,-\halfh)$);
			\coordinate (C)  at ($(x)+(\halfw,\halfh)$);
			\coordinate (D)  at ($(x)+(-\halfw,\halfh)$);
			\draw[thick] (A) rectangle (C);
			
			\draw[thick,->] (x) -- (0,2.2);
			\node[fill=white,inner sep=1pt,anchor=east] at (-0.15,0.9) {$N(z)$};
			
			\fill (z) circle[radius=1.5pt];
			\node[fill=white,inner sep=1pt,anchor=west] at (0.08,0.12) {$z$};
			\fill (x) circle[radius=1.5pt];
			\node[fill=white,inner sep=1pt,anchor=north] at ($(x)-(0.08, 0.08)$) {$x$}; 
			
			\draw[decorate,decoration={brace,mirror,amplitude=6pt}] (A) -- (B)
			node[midway,below=7pt,fill=white,inner sep=1pt] {$2c\,a^{\tfrac{j}{n+1}}$};
			
			\draw[decorate,decoration={brace,mirror,amplitude=6pt}] (D) -- (A)
			node[midway,left=7pt,fill=white,inner sep=1pt] {$2c\,a^{\tfrac{2j}{n+1}}$};
			
			\draw[very thick,<->] (0.45,0) -- (0.45,{-\gap-\halfh})
			node[midway,right=7pt,fill=white,inner sep=1pt] {$\approx a^{\tfrac{2j}{n+1}}$};
			
			\node[anchor=west] at ($(C)+(0.2,-1)$) {$A_x$};
			
		\end{tikzpicture}
	\end{figure}
	Now we turn our attention to proving that every $C^2$ convex domain with non-vanishing boundary curvatures is actually admissible. We start with the following lemma.
	\begin{lm}\label{ex}
		There are $c>0$ and $k_0\in\mathbb{Z}_{+}$ such that for every $x\in \Omega$, we can find a rotation $R_{x}$ for which
		$$A_x := x+cR_{x} P_j \subset \bigcup_{|j-k|<k_0}\Delta_k(\Omega),$$
		where $j$ is such that $x\in \Delta_j(\Omega)$ and $P_j=(-a^{\frac{2j}{n+1}},a^{\frac{2j}{n+1}})\times (-a^{\frac{j}{n+1}},a^{\frac{j}{n+1}})^{n-1}$.
	\end{lm} 
	\begin{proof}
		Let us use the notation $e_1,...,e_n$ for the classical orthonormal basis of $\mathbb{R}^n$. It is enough to prove the lemma for sufficiently small $j$. Fix $x\in\Delta_j(\Omega)$ and let $z\in\partial\Omega$ be such that 
		\begin{equation} \label{eq:disttobdryugh1}
			\dist(x,\partial\Omega)=|x-z| \approx a^{\frac{2j}{n+1}}.
		\end{equation}
		Here we have implicitly used Lemma \ref{omegaconvex}.
		Let $R_x$ be a rotation such that $R_xe_1=\frac{z-x}{|z-x|}=N(z)$ is the outward-pointing normal vector at $z$. Let $B_1$ and $B_2$ be the inscribed and circumscribed balls at $z$ of radii $\frac{1}{K}$ and $\frac{1}{k}$, where $k$ and $K$ are the minimal and maximal principal curvatures on $\Omega$. By comparing with the distance to the balls $B_1$ and $B_2$ we find, for sufficiently small $t>0$ and $w\in \spans(R_x e_2,...,R_x e_n)$ with $t>\frac{1}{k}\|w\|^2$, that 
		\begin{equation} \label{eq:disttobdryugh2}
			\dist(z-tN(z)+w,\partial\Omega)\approx t.
		\end{equation}
		Therefore, if $j$ is sufficiently small, and we combine \eqref{eq:disttobdryugh1} and \eqref{eq:disttobdryugh2}, we see that there is a constant $c$ (depending only on $\Omega$) such that any $u\in x+cR_x P_j$ can be written $u=z-tN(z)+w$ with $t\approx a^{\frac{2j}{n+1}}$ and $\|w\|\lesssim  a^{\frac{j}{n+1}}$. By Lemma \ref{omegaconvex} again, we conclude that
		$$\omega_{\Omega}(u)\approx\dist(u,\partial\Omega)^{\frac{n+1}{2}}\approx t^{\frac{n+1}{2}}\approx a^{j},$$
		which is what we wanted to show.
	\end{proof}
	Next, we show that the parallelepipeds in Lemma~\ref{ex} automatically satisfy property \ref{4}. In the proof, we have to do some additional work related to the issue that in most dimensions it is impossible to make a continuous choice $x \mapsto R_x$.
	\begin{lm}\label{1-4}
		For $x \in \Omega$ and $0<\epsilon \leq c$, let $\tilde{A}_x=x+ \epsilon R_{x} P_j$, where $c,P_j$, and $R_{x}$ are as in Lemma \ref{ex}, and let $\tilde{T}_x$ be the affine transform that maps $\tilde{A}_x$ onto $(-1,1)^n$. Then there exists $C>0$, depending only on $\Omega$, such that if $y\in \Omega$ is a point for which $\tilde{A}_x\cap \tilde{A}_y\neq\emptyset$, then
		$$\tilde{T}_x \tilde{A}_y \subset C(-1,1)^n.$$
	\end{lm}
	\begin{proof}
		Let $j$ and $k$ be such that $x \in \Delta_j(\Omega)$ and $y \in \Delta_k(\Omega)$. Then, by Lemma \ref{ex}, $|j-k|\leq 2 k_0$. Let $$D_{j}=\diag (a^{\frac{2j}{n+1}},a^{\frac{j}{n+1}},...,a^{\frac{j}{n+1}}).$$ Let $u \in \tilde{A}_x\cap \tilde{A}_y$ be any point. By the definition of $\tilde{A}_y$, we have that 
		$$\tilde{A}_y-u \subset 2\epsilon R_{y}D_k (-1,1)^n.$$
		Applying $\tilde{T}_{x},$ noting that $\tilde{T}_x z=\epsilon^{-1}D_j^{-1}R_{x}^{-1}(z-x),$ we see that
		$$\tilde{T}_x (\tilde{A}_y)\subset \tilde{T}_xu + 2D_j^{-1}R_{x}^{-1}R_{y}D_k (-1,1)^n,$$
		where, since $u\in \tilde{A}_x$, $\tilde{T}_x u\in (-1,1)^n$.
		
		It therefore suffices to show that the operator $ D_j^{-1}R_{x}^{-1}R_{y}D_k$ is bounded independently of $x,y,j$, and $k$, whenever $\tilde{A}_x \cap \tilde{A}_y \neq \emptyset$ and (thus) $|j-k|\leq 2k_0$. Let $z_1,z_2\in\partial\Omega$ be points for which $|x-z_1|=\dist(x,\partial\Omega)$ and $|y-z_2|=\dist(y,\partial\Omega)$, so that, by definition, $R_{x}e_1=N(z_1)$ and $R_{y}e_1=N(z_2)$.
		Note that the rotations $R_x$ and $R_y$ are only fixed up to composition with rotations $U$ that fix $e_1$. Let $\tilde{R}_{x,y}$ be the rotation along the shorter segment of the great circle connecting $N(z_1)$ and $N(z_2)$, so that
		$$\|I - \tilde{R}_{x,y}\| \lesssim |N(z_1)-N(z_2)|\lesssim |z_1-z_2|\leq |z_1-x|+|x-y|+|y-z_2|\lesssim a^{\frac{j}{n+1}}.$$
		Furthermore, there are rotations $U_1$ and $U_2$ such that $U_1 e_1 = U_2 e_1 = e_1$ and $U_1 R_x^{-1} R_y U_2 = \tilde{R}_{x,y}$, and therefore
		\[\|D_j^{-1}R_{x}^{-1}R_{y}D_k\| \leq \|D_j^{-1} U_1^{-1} D_k\| \|D_k^{-1} \tilde{R}_{x,y} D_j \| \| D_j^{-1} U_2^{-1} D_k\|\]
		Note that $U_j^{-1} = I \oplus \tilde{U}_j^{-1}$ splits into an orthogonal sum of the identity on $\textrm{span} \{e_1\}$ and a rotation of the subspace $\textrm{span} \{e_2, \ldots, e_n\}$, on which the operators $D_j$ and $D_k$ act as multiples of the identity. Since $|j-k| \leq 2k_0$, we therefore find that
		\begin{align*}
			\|D_j^{-1}R_{x}^{-1}R_{y}D_k\| &\lesssim \|D_k^{-1} \tilde{R}_{x,y} D_j \| \leq \|D_k^{-1} D_j\| + \|D_k^{-1}\| \|I -\tilde{R}_{x,y}\| \|D_j\| \\
			&\lesssim 1+ a^{\frac{-2k}{n+1}} a^{\frac{j}{n+1}} a^{\frac{j}{n+1}}\approx 1. \qedhere
		\end{align*}
		
	\end{proof}
	We now describe how to pick points $(x_{j,i})$ to obtain a valid decomposition $\{A^i_j\}$.
	\begin{lm}\label{existxji}
		There exists a sequence $(x_{j,i})_{j,i} \subset \Omega$ and $0<\tilde{\epsilon}<\epsilon<c$ such that
		\begin{enumerate} 
			\item \label{one} $(x_{j,i}+\tilde{\epsilon} R_{x_{j,i}}P_j)\cap (x_{k,l}+\tilde{\epsilon} R_{x_{k,l}}P_k)= \emptyset$ whenever $(j,i) \neq (k,l)$, and
			\item \label{two} $x_{j,i}+\epsilon R_{x_{j,i}}P_j$ cover $\Omega$,
		\end{enumerate} 
		where $P_j,R_{x}$, and $c$ are defined as in Lemma \ref{ex}.
	\end{lm}
	
	\begin{proof}
		Let us fix $\tilde{\epsilon}<\frac{c}{1+C}$, where $C$ is as in Lemma \ref{1-4}. By Zorn's lemma we can pick a maximal sequence $(x_{j,i})_{j,i}$ such that (\ref{one}) holds. Let $y\in \Delta_j(\Omega)$. The maximality of $(x_{k,l})_{k,l}$ means that the parallelepiped $y+\tilde{\epsilon} R_{y}P_j$ must intersect a parallelepiped $x_{k,l}+\tilde{\epsilon} R_{x_{k,l}}P_k$. But then Lemma \ref{1-4} says that
		$$y+\tilde{\epsilon} R_{x_{j,i}}P_j \subset x_{k,l}+C \tilde{\epsilon} R_{x_{k,l}}P_k.$$
		Thus (\ref{two}) holds as well, for $\epsilon=C\tilde{\epsilon}$. 
	\end{proof}
	Fix a sequence $(x_{j,i})$ as in Lemma~\ref{existxji}, and let 
	\begin{equation} \label{eq:aijdef}
		A_j^i := A_{x_{j,i}}.
	\end{equation}
	Then Lemma~\ref{ex} shows that properties \ref{1} and \ref{2} hold, Lemma~\ref{existxji} (2) shows that \ref{3} holds, while Lemma~\ref{1-4} shows that \ref{4} holds. In order to prove that $\{A^i_j\}$ is a decomposition for $\Omega$, it thus only remains to verify property \ref{5}.
	\begin{lm}\label{admis}
		The sets $A_j^i$ defined in \eqref{eq:aijdef} satisfy property \ref{5}. 
	\end{lm}
	\begin{proof}
		We fix $j,i$ and $k,l$ throughout the proof, as we estimate the cardinality of
		$$J_{j,i,k,l}=\{(\beta',\gamma'):A_{\beta'}^{\gamma'}\cap(\frac{1}{2}(A_j^i+A_k^l))\neq \emptyset\}.$$
		Furthermore, fix a pair $\beta,\gamma$ such that $(A_j^i+A_k^l)\cap 2A_\beta^\gamma\neq\emptyset$. Then, by Proposition \ref{thesum}, for any $x\in A_j^i, y\in A_k^l$, 
		$$\omega_{\Omega}(\frac{x+y}{2})\approx \max(a^j,a^k,\theta(x,y)^{n+1}) \approx \max(a^j,a^k,\theta(x_{j,i},x_{k,l})^{n+1}),$$
		where the second equivalence follows from the fact that
		$$\theta(x,y)\lesssim \theta(x,x_{j,i})+\theta(x_{j,i},x_{k,l})+\theta(x_{k,l},y)\lesssim \max(a^{\frac{j}{n+1}},a^{\frac{k}{n+1}},\theta(x_{j,i},x_{k,l})).$$ 
		Therefore 
		$$a^\beta\approx \max(a^j,a^k,\theta(x_{j,i},x_{k,l})^{n+1}).$$ 
		In particular, there is a constant $M_0$ such that if $(A_j^i+A_k^l)\cap 2A_{\beta'}^{\gamma'}\neq\emptyset$, then $|\beta - \beta'| \leq M_0$, so that 
		$$\frac{1}{2}(A_j^i+A_k^l) \subset \omega^{-1}_{\Omega}(a^{\beta - M}, a^{\beta + M}),$$
		where $M = M_0 + 1$. Furthermore, since $\frac{1}{2}(A_j^i+A_k^l)$ is convex, by the hyperplane separation theorem there is an $x = x(i,j,k,l) \in \omega_{\Omega}^{-1}(a^{\beta+M})$ such that $\frac{1}{2}(A_j^i+A_k^l)\subset \Lambda_{a^{2M}}(x)$, where $\Lambda_{a^{2M}}(x)$ is the convex cap defined in \eqref{eq:convcap}.
		
		For any $(\beta', \gamma') \in J_{j,i,k.l}$, there is thus by definition a point $z\in \Lambda_{a^{2M}}(x)\cap A_{\beta'}^{\gamma'}$. By convexity, $\frac{x+z}{2}\in \Lambda_{a^{2M}}(x)$, and therefore
		$$a^{\beta}\approx \omega_{\Omega}(\frac{x+z}{2}) \approx \max(\omega_{\Omega}(x),\omega_{\Omega}(z),\theta(x,z)^{n+1})\approx \max(a^{\beta},\theta(x,z)^{n+1}),$$ 
		and hence $\theta(x,z)\lesssim a^{\frac{\beta}{n+1}}$. We conclude that every point $y\in A_{\beta'}^{\gamma'}$ satisfies that 
		$$\theta(x,y)\lesssim \theta(x,z)+\theta(z,y)\lesssim a^{\frac{\beta}{n+1}}.$$ 
		
		To summarize, we have now shown that there are $M,C>0$ such that 
		$$\bigcup_{(\beta', \gamma') \in J_{j,i,k.l}} A_{\beta'}^{\gamma'}\subset \{y\in \Omega:a^{\beta-M}\leq \omega_{\Omega}(y)\leq a^{\beta+M},  \; \theta(x,y)\leq Ca^{\frac{\beta}{n+1}}\},$$
		where $x = x(i,j,k,l)$ is the point described earlier.
		By the change of variables $y\to \frac{y}{\lambda(y)}$, we can estimate the volume of this latter set,
		\begin{multline*} m(\{y\in \Omega:a^{\beta-M'}\leq \omega_{\Omega}(y)\leq a^{\beta+M'},\theta(x,y)\leq Ca^{\frac{\beta}{n+1}}\}) \\
			\approx  m(\{y\in B_n: C_1 a^{\frac{2\beta}{n+1}} \leq 1-|y| \leq C_2 a^{\frac{2\beta}{n+1}}, \; \theta(x/\lambda(x),y)\leq C_3a^{\frac{\beta}{n+1}}\})\approx a^{\beta}. 
		\end{multline*}
		We finally apply Lemma~\ref{existxji} (1), to conclude that 
		\begin{align*}
			\# J_{j,i,k,l} &\lesssim a^{-\beta}m\left (\bigcup_{(\beta',\gamma')\in J_{j,i,k,l}} x_{\beta',\gamma'}+\epsilon R_{x_{\beta',\gamma'}}P_{\beta'}\right) \\ &\leq a^{-\beta}m\left(\bigcup_{(\beta',\gamma')\in J_{j,i,k,l}} A_{\beta'}^{\gamma'}\right) \approx 1.
		\end{align*}
		The proof is complete.
	\end{proof}
	In the strongly convex setting, we can by the following computation extend the Hilbert--Schmidt characterization of Theorem \ref{S2} to a greater range of values of $\sigma, \tau$.
	\begin{lm}\label{kallelemma}
		For all $z \in \Omega$ and indices $j,k$, it holds that 
		$$m( (2z - \Delta_j) \cap \Delta_k \cap M(z)) \lesssim a^{k \frac{2}{n+1}} a^{j \frac{2}{n+1}} \omega_\Omega(z)^{\frac{n-3}{n+1}}.$$
	\end{lm} 
	\begin{proof}
		By symmetry, we may suppose that $k \leq j$. Furthermore, by a change of variables, we may assume that $\Omega = B_n$ is the unit ball in $\mathbb{R}^n$, and that $z = (r, 0, \ldots, 0)$, $0 < r < 1$, so that $\omega_{B_n}(z) \approx (1 - r)^{\frac{n+1}{2}}$. We write points $x \in B_n$ in polar coordinates, $x = \rho \gamma $, where $0 \leq \rho < 1$ and $\gamma \in S^{n-1}$ lies on the unit sphere. Let $\theta = \arccos \,  \langle \gamma, (1, 0, \ldots, 0) \rangle$ denote the angle between $x$ and $z$. 
		
		For fixed $\gamma$ with $|\sin \theta| \leq \frac{1}{2r}$, note that $x = \rho \gamma \in \partial (2z - B_n)$ precisely when  
		$|2z - x|^2 = 4r^2 - 4 r \rho \cos \theta + \rho^2 = 1$, that is, when
		$\rho = 2r\cos\theta  \pm \sqrt{1 - 4r^2\sin^2 \theta}$. Accordingly, the length of the intersection between $M(z)$ and the ray $x = \rho \gamma$, $0 \leq \rho < 1$, is given by
		$$W(\gamma) = 1 - \rho_{\gamma, z} := 1 - 2r\cos \theta + \sqrt{1 - 4r^2\sin^2 \theta}.$$
		We notice that $W(\gamma) \geq 0$ if and only if $\cos\theta \geq r$ and thus there is $c_1>0$ such that $W(\gamma)\geq 0$ implies
		\begin{equation} \label{eq:thetareq}
			\theta^2 \leq 2(1 - r) + c_1(1-r)^2.
		\end{equation}
		Taylor expansion also yields that
		\begin{equation} \label{eq:Wasymp}
			W(\gamma) = 2(1-r) - \theta^2 + \theta^2(2r+1)(1-r) + O(\theta^4).
		\end{equation} 
		Since we are only interested in $\gamma \in S^{n-1}$ such that the ray $x = \rho \gamma$ intersects $M(z)$, we therefore only need to consider  angles $\theta$ satisfying \eqref{eq:thetareq}.
		
		Since $|2z - \rho_{\gamma, z} \gamma| = 1$,  we have for all $x = \rho \gamma \in M(z)$ that
		$$\omega_{B_n}(2z - x) \approx (1 - |2z - \rho \gamma|^2)^{\frac{n+1}{2}} \approx (\rho - \rho_{\gamma,z})^{\frac{n+1}{2}} = (W(\gamma) - (1 - \rho))^{\frac{n+1}{2}},$$
		assuming, say, that $|z| = r > 1/2$.  Therefore there is a constant $c_2 > 0$ such that if $x = \rho \gamma \in (2z - \Delta_j) \cap \Delta_k \cap M(z)$, then
		$$a^{\frac{2}{n+1}k - c_2} \leq  1 - \rho \leq a^{\frac{2}{n+1}(k+1) + c_2}, \qquad a^{\frac{2}{n+1}j - c_2} \leq W(\gamma) - (1 - \rho) \leq a^{\frac{2}{n+1}(j+1) + c_2}.$$
		In particular, combined with \eqref{eq:Wasymp}, we see that if $(2z - \Delta_j) \cap \Delta_k \cap M(z) \neq \emptyset$, then there exist constants $c_4 \geq c_3 >  0$ such that $a^{\frac{2}{n+1}(j+1) - c_4} \leq 2(1-r)$ and 
		$\theta^2$ belongs to the interval
		$$S_{z,j} = \{\psi \, : \, 2(1-r) - a^{\frac{2}{n+1}j - c_3} \leq \psi  \leq 2(1-r) - a^{\frac{2}{n+1}(j+1) - c_4} \}.$$
		Therefore, if $j$ is such that $a^{\frac{2}{n+1}j - c_3} \leq 2(1-r)$, then, by rotational symmetry,
		\begin{multline*}
			m( (2z - \Delta_j) \cap \Delta_k \cap M(z)) \lesssim a^{\frac{2}{n+1} k }\int_{\theta^2 \in S_{z,j,k}} \sin^{n-2} \theta \, d\theta. \\
			\lesssim a^{\frac{2}{n+1}k} \left( (2(1-r) - a^{\frac{2}{n+1}(j+1) - c_4})^{\frac{n-1}{2}} - (2(1-r) - a^{\frac{2}{n+1}j - c_3})^{\frac{n-1}{2}} \right) \\
			\lesssim a^{\frac{2}{n+1}k} a^{\frac{2}{n+1}j} (2(1-r) - a^{\frac{2}{n+1}(j+1) - c_4})^{\frac{n-3}{2}} \lesssim a^{\frac{2}{n+1}k} a^{\frac{2}{n+1}j} (1-r)^{\frac{n-3}{2}}.
		\end{multline*}
		For the remaining $j$, for which $a^{\frac{2}{n+1}j - c_3} < 2(1-r) \leq a^{\frac{2}{n+1}(j+1) - c_4}$, and thus $(1-r) \approx a^{\frac{2}{n+1}j}$ we instead obtain the estimate
		$$m( (2z - \Delta_j) \cap \Delta_k \cap M(z)) \lesssim a^{\frac{2}{n+1}k} (1-r)^{\frac{n-1}{2}} \approx a^{\frac{2}{n+1}k} a^{\frac{2}{n+1}j} (1-r)^{\frac{n-3}{2}}.$$
		This translates precisely to the statement of the lemma.
	\end{proof}
	Inserting this lemma into the proof of Theorem \ref{S2} immediately yields its extension.
	\begin{theo}\label{S2smooth}
		Let $\Omega\subset\mathbb{R}^n$ be a bounded convex set with $C^2$ boundary of non-zero curvature. Then Theorem \ref{S2} holds for all $\sigma,\tau>-\frac{1}{n+1}$.
	\end{theo}
	\begin{proof}
		As in the proof of Theorem \ref{S2}, we have, for $x \in 2\Omega$, that 
		\begin{eqnarray}
			\omega_{\Omega}^{2\sigma} \ast \omega_{\Omega}^{2\tau}(x)&\lesssim&\sum_{j, k \leq c_1+\log_a\omega_{\Omega}(x/2)}a^{2\sigma j}a^{2\tau k}m((x-\Delta_j)\cap \Delta_k\cap M(x/2)) \nonumber \\
			&\lesssim& \sum_{j, k \leq c_1+\log_a\omega_{\Omega}(x/2)}a^{2\sigma j}a^{2\tau k}a^{k \frac{2}{n+1}} a^{j \frac{2}{n+1}} \omega_\Omega(x/2)^{\frac{n-3}{n+1}} \nonumber \\
			&\approx& \omega_\Omega(x/2)^{\frac{n-3}{n+1}}\omega_{\Omega}(x/2)^{2\sigma+\frac{2}{n+1}}\omega_{\Omega}(x/2)^{2\tau+\frac{2}{n+1}}=\omega_{\Omega}(x/2)^{2\sigma+2\tau+1}, \nonumber 
		\end{eqnarray}
		as desired.
	\end{proof}
	To finish the proof of Theorem~\ref{thm:mainsmooth}, we need to verify the endpoint criterion given by Theorem~\ref{Sinfty2}.
	\begin{lm}\label{Kbound}
		For every $\sigma,\tau>-\frac{1}{n+1}$, the integral operator with kernel $$K(x,y)=\dfrac{\omega_{\Omega}^{\sigma}(x)\omega_{\Omega}^{\tau}(y)}{\omega_{\Omega}^{\sigma+\tau+1}(\frac{x+y}{2})},$$ is bounded on $L^2(\Omega)$.
	\end{lm}
	\begin{proof}
		The hypotheses on $\sigma$ and $\tau$ allow us to choose $\gamma$ such that
		$$-\frac{2}{n+1}-\sigma<\gamma<\sigma, \qquad -\frac{2}{n+1}-
		\tau<\gamma<\tau.$$
		We will apply Schur's test with weights $\omega_{\Omega}^{\gamma}(x)$ and $\omega_{\Omega}^{\gamma}(y)$.  By Lemma~\ref{omegaconvex} and Proposition \ref{thesum} we may assume that $\Omega = B_n$ is the unit ball, and we need to prove precisely that the integral 
		\begin{multline*}
			\int_{B_n} \dfrac{(1-|x|)^{\frac{n+1}{2}\sigma}(1-|y|)^{\frac{n+1}{2}\tau}}{\max((1-|x|)^{\frac{n+1}{2}},(1-|y|)^{\frac{n+1}{2}},\theta(x,y)^{n+1})^{\sigma+\tau+1}}\dfrac{(1-|x|)^{\frac{n+1}{2}\gamma}}{(1-|y|)^{\frac{n+1}{2}\gamma}}dx = \\
			\underbrace{(1-|y|)^{\frac{n+1}{2}(\tau-\gamma)}\int_{\{1-|x|>\max(1-|y|,\theta^2(x,y))\}}(1-|x|)^{\frac{n+1}{2}(\gamma-\tau-1)}dx}_{I_1} \\
			\qquad + \underbrace{(1-|y|)^{\frac{n+1}{2}(-\gamma-\sigma-1)}\int_{\{1-|y|>\max(1-|x|,\theta^2(x,y))\}}(1-|x|)^{\frac{n+1}{2}(\sigma+\gamma)}dx}_{I_2} \\
			+ \underbrace{(1-|y|)^{\frac{n+1}{2}(\tau-\gamma)}\int_{\{\theta^2(x,y)>\max(1-|x|,1-|y|)\}}\dfrac{(1-|x|)^{\frac{n+1}{2}(\sigma+\gamma)}}{\theta(x,y)^{(n+1)(\sigma+\tau+1)}}dx}_{I_3}
		\end{multline*}
		is bounded independently of $y$. 
		
		First we treat $I_1$. Note that, uniformly in $r$,
		$$m_{n-1}(|x|=r:\theta(x,y) < \rho) = O(\rho^{n-1}),$$
		where $m_{n-1}$ denotes $(n-1)$-dimensional measure.
		Thus, by Tonelli's theorem, 
		\begin{eqnarray}
			I_1&\lesssim&(1-|y|)^{\frac{n+1}{2}(\tau-\gamma)}\int_{\{r<|y|\}}(1-r)^{\frac{n+1}{2}(\gamma-\tau-1)}m_{n-1}(\{|x|=r:\theta(x,y)<\sqrt{1-r}\})dr \nonumber \\
			&\lesssim&(1-|y|)^{\frac{n+1}{2}(\tau-\gamma)}\int_{\{r<|y|\}}(1-r)^{\frac{n+1}{2}(\gamma-\tau-1)}(1-r)^{\frac{n-1}{2}}dr \nonumber \\
			&\approx&(1-|y|)^{\frac{n+1}{2}(\tau-\gamma)}(1-|y|)^{\frac{n+1}{2}(\gamma-\tau)}=1. \nonumber 
		\end{eqnarray}
		where we used that $\frac{n+1}{2}(\gamma-\tau-1)+\frac{n-1}{2} = \frac{n+1}{2}(\gamma-\tau) -1 <-1$. 
		
		Arguing similarly for $I_2$, 
		\begin{eqnarray}
			I_2&\lesssim&(1-|y|)^{\frac{n+1}{2}(-\gamma-\sigma-1)}\int_{r>|y|}(1-r)^{\frac{n+1}{2}(\sigma+\gamma)}m_{n-1}(\{|x|=r:1-|y|>\theta^2(x,y)\})dr \nonumber \\
			&\approx&(1-|y|)^{\frac{n+1}{2}(-\gamma-\sigma-1)}\int_{r>|y|}(1-r)^{\frac{n+1}{2}(\sigma+\gamma)}(1-|y|)^{\frac{n-1}{2}}dr \nonumber  \\
			&\approx&(1-|y|)^{\frac{n+1}{2}(-\gamma-\sigma)-1}(1-|y|)^{\frac{n+1}{2}(\sigma+\gamma)+1}=1, \nonumber  
		\end{eqnarray}
		where we used that $\frac{n+1}{2}(\sigma+\gamma)>-1$. 
		
		Finally, to treat $I_3$, let $M(r)=\max(1-r,1-|y|)$. Note that for $x$ and $y$ bounded away from zero, we have that $\theta(x,y) \approx \theta(x/|x|, y/|y|)$. Thus, introducing polar coordinates with the vector $y/|y|$ as the polar axis, we have, for $0 < r < 1$,
		\begin{multline*}
			\int_{\{|x| = r \, : \, \theta^2(x,y)>M\}}\theta(x,y)^{-(n+1)(\sigma+\tau+1)}dm_{n-1}(x) \\ \approx \int_{\sqrt{M}}^\pi \theta^{-(n+1)(\sigma+\tau+1)}\sin^{n-2}\theta d\theta 
			\approx M^{\frac{-(n+1)(\sigma+\tau+1)+n-1}{2}},
		\end{multline*}
		where we used that $ -(n+1)(\sigma+\tau+1)+n-2<-1$. Therefore, applying Tonelli's theorem we end up with the same integrals as we did for $I_1$ and $I_2$,
		\begin{eqnarray}
			I_3 &\lesssim&(1-|y|)^{\frac{n+1}{2}(\tau-\gamma)}\int_0^1 (1-r)^{\frac{n+1}{2}(\sigma+\gamma)} M(r)^{\frac{-(n+1)(\sigma+\tau+1)+n-1}{2}} dr \nonumber \\
			&\approx&(1-|y|)^{\frac{n+1}{2}(\tau-\gamma)}\int_0^{|y|} (1-r)^{\frac{n+1}{2}(\gamma-\tau-1)+\frac{n-1}{2}} dr \nonumber \\
			&  \qquad+&(1-|y|)^{-\frac{n+1}{2}(\gamma+\sigma+1)+\frac{n-1}{2}}\int_{|y|}^1 (1-r)^{\frac{n+1}{2}(\sigma+\gamma)} dr\approx 1. \nonumber
		\end{eqnarray}
		By symmetry in $x, y$ and $\sigma, \tau$, the proof is complete.
	\end{proof}

	By appealing to Lemma~\ref{Sinfty} and an interpolation argument (as in in Corollary \ref{1 to 2 suf}) we obtain the following theorem, which in particular contains Theorem~\ref{thm:mainsmooth}.
	Note that we interpolate twice: once between $S^1$ (Theorem \ref{S1}) and $S^2$ (Theorem \ref{S2smooth}), and separately between $S^2$ and $S^\infty$ (Lemma \ref{Kbound}).
	\begin{theo}
		Let $\Omega$ be a bounded convex subset of $\mathbb{R}^n$ having $C^2$-boundary with non-vanishing curvatures. Then $\Omega$ is admissible, with decomposition $\{A_j^i\}$ described in Lemmas~\ref{ex}-\ref{admis}. Equip $2\Omega$ with decomposition $\{2A_j^i\}$. Then the following hold.
		\begin{enumerate}
			\item  For $\sigma,\tau>\frac{n-1}{2(n+1)}$, the extended Hankel operator $\Ha_\varphi^{\sigma,\tau}$ is bounded on $\PW(\Omega)$ if and only if $\varphi\in B_{\infty,\infty}^{\sigma+\tau}(2\Omega)$.
			\item For every $1\leq p<\infty$ and $\sigma,\tau>-\frac{1}{n+1}+\max(0,\frac{1}{2}-\frac{1}{p})$, $\Ha_\varphi^{\sigma,\tau}$ is in the Schatten class $S^{p}(\PW(\Omega))$ if and only if $\varphi\in B_{p,p}^{\sigma+\tau+\frac{1}{p}}(2\Omega)$.
			\item For every $1\leq p<2\frac{n+1}{n-1}$, $\Ha_\varphi$ is in the Schatten class $S^{p}(\PW(\Omega))$ if and only if $\varphi\in B_{p,p}^{\frac{1}{p}}(2\Omega)$.
		\end{enumerate}
	\end{theo}
	
	\section{Hankel operators on the product Hardy space} \label{sec:poly}
	As mentioned in the introduction, when we choose $\Omega = \mathbb{R}^n_+$ as the first orthant, we obtain the classical product Hardy space
	$$ 
	\PW(\mathbb{R}^n_+) = H^2(\mathbb{R} \times \cdots \times \mathbb{R}) \simeq H^2(\mathbb{T}^n).
	$$
	The associated weight in this setting has a product structure,
	$$\omega_{\mathbb{R}^n_+}(x_1,...,x_n)=2^n x_1...x_n.$$ For $i=(i_1,...,i_n)\in\mathbb{Z}^n$, let us define the parallellepipeds of the decomposition as $$A_{i}=\prod_{l=1}^n(a^{i_l-1},a^{i_l+1}).$$
	It is evident that these boxes satisfy properties \ref{1}-\ref{5} (the role of $j$ is played by $i_1 + \cdots + i_n$). 
	
	In this setting, it is easy to extend Theorem \ref{S2}.
	\begin{lm}\label{S2hardy}
		When $\Omega=\mathbb{R}^n_+$, Theorem \ref{S2} holds for all $\sigma,\tau>-\frac{1}{2}$.
	\end{lm}
	\begin{proof}
		It is equivalent to prove that $$\omega_{\mathbb{R}^n_+}^{2\sigma} \ast \omega_{\mathbb{R}^n_+}^{2\tau}(x)\lesssim \omega_{\mathbb{R}^n_+}^{2\sigma + 2\tau + 1}(x/2), \qquad x\in \mathbb{R}^n_+.$$ 
		But in this case we can simply compute that
		\begin{eqnarray}
			\omega_{\mathbb{R}^n_+}^{2\sigma} \ast \omega_{\mathbb{R}^n_+}^{2\tau}(x)&\approx&\prod_{j=1}^n\int_{0}^{x_j}y_j^{2\sigma} (x_j - y_j)^{2\tau} dy_j= \prod_{j=1}^n x_j^{2\sigma+2\tau+1}\int_{0}^{1}y_j^{2\sigma} (1-y_j)^\tau dy_j \nonumber  \\
			&=&B(2\sigma+1,2\tau+1)^n \prod_{j=1}^n x_j^{2\sigma+2\tau+1}, \nonumber 
		\end{eqnarray} 
		where $B$ is the Beta function.
	\end{proof}
	
	Combining Theorem \ref{S1} and Lemma \ref{S2hardy} with interpolation, and separately Lemma \ref{S2hardy} with Lemma~\ref{Sinfty}, therefore yields the sufficiency part of following theorem. Necessity is given by Theorem~\ref{converse}.
	\begin{theo}\label{Hardy}
		The following hold.
		\begin{enumerate}
			\item  For $\sigma,\tau>0$, the extended Hankel operator $\Ha_\varphi^{\sigma,\tau}$ is bounded on the product Hardy space $H^2(\mathbb{R} \times \cdots \times \mathbb{R})$ if and only if $\varphi\in B_{\infty,\infty}^{\sigma+\tau}(\mathbb{R}^n_+)$.
			\item For every $1\leq p<\infty$ and all $\sigma, \tau >\max(-\frac{1}{2},-\frac{1}{p})$, the extended Hankel operator $\Ha_\varphi^{\sigma,\tau}$ is in the Schatten class $S^{p}(H^2(\mathbb{R} \times \cdots \times \mathbb{R}))$ if and only if $\varphi\in B_{p,p}^{\sigma+\tau+\frac{1}{p}}(\mathbb{R}^n_+)$.
		\end{enumerate}
		In both cases, the Besov spaces are defined by the decomposition $\{A_i\}$, and the corresponding norms are equivalent.
	\end{theo}
	\begin{proof}
		It remains to show that the operator with kernel $$K(x,y)=\frac{\omega_{\mathbb{R}^n_+}^{\sigma}(x)\omega_{\mathbb{R}^n_+}^{\tau}(y)}{\omega_{\mathbb{R}^n_+}(\frac{x+y}{2})^{\sigma+\tau+1}},$$ is bounded whenever $\sigma,\tau>-\frac{1}{2}$. We apply Schur's test with weights $\omega_{\mathbb{R}^n_+}^{-\frac{1}{2}}(x)$ and $\omega_{\mathbb{R}^n_+}^{-\frac{1}{2}}(y)$. By symmetry, we must then verify that
		$$\sup_{y\in\mathbb{R}^n_+}\int_{\mathbb{R}^n_+}K(x,y)\left(\dfrac{\omega_{\mathbb{R}^n_+}^{-\frac{1}{2}}(x)}{\omega_{\mathbb{R}^n_+}^{-\frac{1}{2}}(y)}\right)dx<\infty.$$ 
		Due to the product structure, it suffices to show that
		$$\sup_{y\in\mathbb{R}_+}\int_{\mathbb{R}_+}\dfrac{x^\sigma y^\tau}{(x+y)^{\sigma+\tau+1}}\left(\dfrac{y}{x}\right)^{\frac{1}{2}}dx<\infty.$$
		But by the change of variable $x=yt$, we have that
		$$\int_{\mathbb{R}_+}\dfrac{x^\sigma y^\tau}{(x+y)^{\sigma+\tau+1}}\left(\dfrac{y}{x}\right)^{\frac{1}{2}}dx=\int_{\mathbb{R}_+}\dfrac{t^{\sigma-\frac{1}{2}} }{(t+1)^{\sigma+\tau+1}}dt,$$
		which converges independently of $y$, whenever $\sigma,\tau>-\frac{1}{2}$. 
	\end{proof}
	By the same proof, we also recover and extend the results of Peng \cite{MR1001657} for the cube $C_{n}=(-1,1)^{n}$. To reflect the tensor structure, we will index the decomposition of $C_n$ with multi-indices $i=(i_{1},...,i_{n})\in\mathbb{Z}^{n}$. When $n = 1$ and $i\in\mathbb{Z}$, we define 
	$$A_{i}=\{x\in (-1,1): x\in \sgn i(1-a^{-|i|+1},1-a^{-|i|-1})\}, \qquad i \neq 0,$$ 
	and $A_{0}=(-\frac{2}{3},\frac{2}{3})$. When $n > 1$ we define
	$$A_{(i_{1},i_{2},...,i_{n})}=A_{i_{1}}\times A_{i_{2}}\times ... \times A_{i_{n}}.$$
	It is evident that the decomposition $\{A_i\}$ satisfies all properties \ref{1}-\ref{5} (the role of $j$ is played by $|i| = |i_1| + \cdots + |i_n|$). 
	\begin{theo}\label{Peng}
		For $1\leq p < \infty$ and $\sigma,\tau>\max(-\frac{1}{2},-\frac{1}{p})$, the extended Hankel operator $\Ha_{\varphi}^{\sigma,\tau}$ is in the Schatten class $S^{p}(\PW(C_{n}))$ if and only if $\varphi\in B_{p,p}^{\sigma+\tau+\frac{1}{p}}(2C_{n})$, and the corresponding norms are equivalent, where the Besov space is defined with respect to the decomposition $\{2A_{i}\}$.
	\end{theo}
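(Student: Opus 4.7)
The plan is to apply the general framework developed in the preceding sections, with the crucial additional step of verifying the boundedness of the auxiliary operator $T_{\sigma,\tau,\sigma+\tau}$ needed to invoke Theorem~\ref{Sinfty}. First I would confirm admissibility of $C_n$ with the pair $\{(E_{i}, A_{i})\}$. The tensor-product structure makes properties \ref{1}-\ref{8} essentially immediate, with $a=2$, with $|i| = |i_1|+\cdots+|i_n|$ playing the role of $j$, and with $n_j \approx j^{n-1}$ as guaranteed by Theorem~\ref{nj} for polytopes; the role of the region $\mathcal{G}_{j,i}$ in property \ref{3} can be played by the product of intervals containing the relevant coordinates. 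Once admissibility is in place, Theorem~\ref{thm:mainnec} gives necessity of the Besov condition for all $1\leq p\leq\infty$ and all $\sigma,\tau\in\mathbb{R}$, while Corollary~\ref{cor 1 to 2} yields both sufficiency and necessity for $1\leq p\leq 2$ in the full prescribed range $\sigma,\tau>-\frac{1}{2p}$, since $C_n$ is a polytope.

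For the remaining range $2<p\leq\infty$, I would proceed by complex interpolation between the Hilbert--Schmidt case, Theorem~\ref{S2}, and the boundedness case, Theorem~\ref{Sinfty}, exactly as in the proof of Corollary~\ref{1 to 2 suf}. The endpoint $p=\infty$ requires $\sigma,\tau>0$, which in turn requires showing that the matrix operator $T_{\sigma,\tau,\sigma+\tau}$ is bounded on $\ell^2(\mathbb{Z}^n\times\mathbb{Z}^n)$ for all $\sigma,\tau>0$. Having this in hand, choosing suitable endpoints $\sigma_1,\tau_1>-\frac{1}{4}$ (Hilbert--Schmidt) and $\sigma_2,\tau_2>0$ (boundedness), and interpolating with $z=1-2/p$, yields the desired conclusion for any $\sigma,\tau>-\frac{1}{2p}$.

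The main obstacle is therefore establishing the boundedness of $T_{\sigma,\tau,\sigma+\tau}$ when $\sigma,\tau>0$. For this I would use the explicit tensor-product formula $\omega_{\frac{1}{2}C_n}(x)\approx \prod_{k=1}^{n}(1-|x_k|)$, which for $x\in E_i$ gives $\omega_{\frac{1}{2}C_n}(x)\approx 2^{-|i|}$, and for $x\in E_i,\,y\in E_l$ yields
\[
\omega_{\frac{1}{2}C_n}\!\bigl((x+y)/2\bigr) \approx \prod_{k=1}^{n} 2^{-\min(|i_k|,|l_k|)}\chi_{\{\operatorname{sgn} i_k=\operatorname{sgn} l_k\}} + \text{(smaller contributions when signs disagree)},
\]
so that the index $\beta=\beta(i,l)$ satisfies $|\beta|\leq \sum_k \min(|i_k|,|l_k|)+O(1)$. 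I would then run Schur's test coordinatewise with a weight of the form $(i)\mapsto \prod_k 2^{-|i_k|\gamma}$ for a suitable $\gamma>0$, reducing the boundedness of $T_{\sigma,\tau,\sigma+\tau}$ to $n$ independent one-dimensional Schur tests on $\mathbb{Z}$. Each one-dimensional test requires
\[
\sum_{i\in\mathbb{Z}} 2^{-|i|\sigma} 2^{-|l|\tau} 2^{\min(|i|,|l|)(\sigma+\tau)} 2^{-|i|\gamma} \lesssim 2^{-|l|\gamma},
\]
and splitting the sum according to whether $|i|\leq|l|$ or $|i|>|l|$ shows that this holds provided $0<\gamma<\tau$ and $0<\gamma<\sigma$, which is possible precisely when $\sigma,\tau>0$. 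By symmetry the adjoint condition is identical, so Schur's test delivers the required bound, closing the argument.
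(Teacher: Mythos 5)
Your proposal follows the same route as the paper: admissibility of $C_n$ with the tensor-product pair $\{(E_i,A_i)\}$, necessity from Theorem~\ref{thm:mainnec}, sufficiency for $1\le p\le 2$ from Corollary~\ref{cor 1 to 2}, and for $2<p\le\infty$ interpolation between the Hilbert--Schmidt and boundedness endpoints via Theorem~\ref{Sinfty}, reduced by the tensor structure to the $n=1$ case. The only cosmetic difference is the last step: you verify boundedness of $T_{\sigma,\tau,\sigma+\tau}$ by a direct Schur test with weight $2^{-|i|\gamma}$, whereas the paper first dominates the entries by $2^{-\min(\sigma,\tau)\,||i|-|k||}$ and appeals to boundedness of an exponentially decaying Toeplitz matrix; these are equivalent and both standard.
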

	Essentially the same proof furthermore applies to all simple polytopes $\Omega \subset \mathbb{R}^n$, which are, by definition, locally affinely isomorphic to the cube $C_n$ at every point of $\partial \Omega$. An appropriate decomposition $\{A_{j}^{i}\}$ for $\Omega$ is obtained from the decomposition of the cube $C_n$, by locally applying the corresponding affine transformations. We thus obtain the following generalization of Theorem \ref{Peng}.
	\begin{theo} \label{thm:mainpoly}
		Let $P$ be a simple polytope in $\mathbb{R}^{n}$. For every $1\leq p< \infty$ and all $\sigma,\tau>\max(-\frac{1}{2},-\frac{1}{p})$, the extended Hankel operator $\Ha_{\varphi}^{\sigma,\tau}$ is in $S^{p}(\PW(P))$ if and only if $\varphi\in B_{p,p}^{\sigma+\tau+\frac{1}{p}}(2P)$, with equivalence of norms.
	\end{theo}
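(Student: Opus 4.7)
The plan is to reduce the theorem to the already-established cube case (Theorem~\ref{Peng}) by exhibiting an admissible pair $\{(E_{j}^{i}, A_{j}^{i})\}$ on $P$ whose local geometry near each vertex is modeled, via an affine chart, on the cube decomposition used there. First I would construct the pair. At each vertex $v$ of $P$, simplicity provides exactly $n$ edges emanating from $v$ whose direction vectors form a basis; let $\Phi_{v}$ be the affine bijection sending $v$ to the corner $(1,\ldots,1)$ of $C_{n}$ and these edges to the coordinate edges. On a neighborhood $U_{v} \subset P$ of $v$ I pull back via $\Phi_{v}^{-1}$ those cube cells $(E_{i}, A_{i})$ of Theorem~\ref{Peng} that lie in $\Phi_{v}(U_{v})$. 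After shrinking the $U_{v}$ so they meet only in a set where $\omega_{\frac{1}{2}P}$ is bounded below, the remainder of $P$ is covered by finitely many coarse interior cells.

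Next I would check properties \ref{1}--\ref{8}. By Lemma~\ref{omega properties}~(2), $\omega_{\Omega}$ transforms naturally under affine bijections, so within each chart $U_{v}$ the level sets of $\omega_{\frac{1}{2}P}$ agree, up to uniform constants, with the pullbacks of the level sets of $\omega_{\frac{1}{2}C_{n}}$; thus properties \ref{1}, \ref{5}, \ref{4}, \ref{7}, \ref{8} transfer from the cube case. For property \ref{2}, interactions of cells within a single chart reduce to the cube. Cross-vertex interactions are automatically bounded because if $x$ lies in a cell near $v_{1}$ and $y$ in a cell near $v_{2} \neq v_{1}$, then $\omega_{\frac{1}{2}P}(\tfrac{x+y}{2})$ is bounded below, so $\tfrac{x+y}{2}$ can lie in only finitely many $2E_{\beta}^{\gamma}$. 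The regions $\mathcal{G}_{j,i}$ of property \ref{3} are defined as pullbacks of the corresponding cube regions (trivial in the interior); property \ref{3b} then follows from the tensor count $n_{j} \approx j^{n-1}$.

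Once admissibility of $P$ is established, the range $1 \leq p \leq 2$ is handled by Corollary~\ref{cor 1 to 2}, and necessity of the Besov condition in the range $p > 2$ is Theorem~\ref{converse}. For sufficiency in $2 < p \leq \infty$ I would apply Theorem~\ref{Sinfty} and interpolate with Theorem~\ref{S2} following the recipe of Corollary~\ref{1 to 2 suf}. The only substantive input is the boundedness of the matrix operator $T_{\sigma,\tau,\sigma+\tau}$ from \eqref{eq:Tsigmataurho} for every $\sigma, \tau > 0$. I would split its index set according to which vertex (if any) the corresponding cell is nearest to. Each diagonal vertex-block is, via the chart $\Phi_{v}$, unitarily equivalent to exactly the matrix treated in the proof of Theorem~\ref{Peng}, and is therefore bounded by the Toeplitz-type bound $(a^{-||i|-|k||})_{i,k \geq 0}$ with $a = 2^{\min(\sigma,\tau)} > 1$. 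Each off-diagonal block (two distinct vertices) acts on indices for which the associated $\beta$ ranges over a finite set, so the block has a uniformly bounded kernel and is trivially bounded. The finite sum over vertex-pairs gives the required bound on $T_{\sigma,\tau,\sigma+\tau}$.

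The main obstacle will be the bookkeeping needed to realize the pair $\{(E_{j}^{i}, A_{j}^{i})\}$ as a genuinely admissible decomposition: the neighborhoods $U_{v}$ may overlap near common faces of $P$, and some care is required to choose them so that overlaps lie in $\omega_{\frac{1}{2}P}^{-1}([a^{-j_{0}}, 1])$ for a fixed $j_{0}$, absorbed by the coarse interior cells. Once this geometric setup is arranged, the properties \ref{1}--\ref{8} and the boundedness of $T_{\sigma,\tau,\sigma+\tau}$ reduce, as indicated above, to the cube case already recorded.
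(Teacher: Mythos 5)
Your overall strategy --- pulling back the cube decomposition via local affine isomorphisms and reducing the matrix bound to the cube/Toeplitz estimate --- is essentially the paper's strategy (the paper compresses it to a few sentences). However, two specific claims in your write-up are false and constitute a genuine gap.

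First, the construction of the admissible pair is incomplete. You propose shrinking the vertex neighborhoods $U_{v}$ so they overlap only where $\omega_{\frac{1}{2}P}$ is bounded below, with the rest of $P$ covered by finitely many coarse interior cells. This cannot be arranged. Already in the cube $C_{n}$, the cells $E_{(i_{1},\ldots,i_{n})}$ with some $i_{m}=0$ and other components arbitrarily large in modulus lie near the interiors of positive-dimensional faces of $C_{n}$: far from every vertex, yet arbitrarily close to $\partial C_{n}$, so $\omega_{\frac{1}{2}C_{n}}$ is arbitrarily small there. There are infinitely many such cells, and they are neither captured by small vertex neighborhoods nor absorbable into finitely many coarse interior cells. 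A correct decomposition of a simple polytope must include cells near the interior of every face of every dimension, with the local product structure $\mathbb{R}^{d}\times\mathbb{R}^{n-d}_{\geq 0}$, and the gluing between charts associated with incident faces has to be organized; this is precisely what the tensor indexing $i\in\mathbb{Z}^{n}$ accomplishes for the cube.

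Second, and for the same reason, the claim that if $x$ lies in a cell near $v_{1}$ and $y$ in a cell near $v_{2}\neq v_{1}$ then $\omega_{\frac{1}{2}P}(\tfrac{x+y}{2})$ is bounded below is false whenever $v_{1}$ and $v_{2}$ lie on a common facet. For $P=(-1,1)^{2}$ with $v_{1}=(1,1)$, $v_{2}=(1,-1)$, taking $x=(1-\varepsilon,1-\varepsilon)$ and $y=(1-\varepsilon,-1+\varepsilon)$ gives $\tfrac{x+y}{2}=(1-\varepsilon,0)$, which approaches $\partial P$ as $\varepsilon\to 0$. Consequently the associated $\beta$ in $T_{\sigma,\tau,\sigma+\tau}$ does not range over a finite set, and the off-diagonal vertex blocks are not ``trivially bounded.'' In the cube proof the bound on $T_{\sigma,\tau,\sigma+\tau}$, including all cross-sign-pattern interactions, is obtained from the global tensor-product structure, which reduces the estimate to a one-dimensional Toeplitz matrix over $\mathbb{Z}$; a general simple polytope has no such global tensor structure, so the reduction has to be carried out locally and the cross-face interactions controlled directly. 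This is exactly the bookkeeping you defer at the end, and it does not reduce ``as indicated above.''
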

	\section{Open Problems and Further Directions} \label{sec:open}
	
	In this section we identify a number of problems raised by our considerations. All of them seem to require a deep understanding of the function $\omega_{\Omega}$.
	
	\begin{question}[Admissibility of convex domains] \rm
		Our construction of Besov spaces $B^s_{p,q}(\Omega)$ depends crucially on the existence of an admissible decomposition of $\Omega$, and there is significant evidence that our definition is the right one, cf. Theorem~\ref{1and2}. We believe that every convex domain $\Omega \subset \mathbb{R}^n$, not containing lines, is admissible. However, even for non-simple polytopes, such as the square-based pyramid $P = \{(x_1,x_2,x_3)\in\mathbb{R}^{3} \, : \, |x_1|+|x_2|< x_3, x_3\in (0,1)\}$, finding the correct decomposition seems like an interesting and possibly challenging task.
	\end{question}
	
	\begin{question}[Convolution inequalities for $\omega_\Omega$] \rm
		In Section~\ref{sec:suf}, we observed that the Hilbert--Schmidt characterization is equivalent to the convolution inequality
		\[
		\omega_\Omega^{2\sigma} \ast \omega_\Omega^{2\tau}(x)
		\lesssim \omega_\Omega^{2\sigma+2\tau+1}(x/2), \qquad x \in 2\Omega.
		\]
		For a general set $\Omega$, we were only able to prove this inequality for $\sigma, \tau > -\frac{1}{2(n+1)}$. In the smooth strongly convex setting, we were able to extend this to $\sigma, \tau > -\frac{1}{n+1}$, and this latter range is sharp for all sets of positive affine surface area (because otherwise $\omega_\Omega^{2\sigma}$ or $\omega_\Omega^{2\tau}$ fails to be integrable \cite{MR1194970}). We conjecture that the convolution inequality in fact holds for all
		$\sigma,\tau > -\tfrac{1}{n+1}$, and for all $\sigma,\tau > -\frac{1}{2}$ when $\Omega$ is a polyhedron.
	\end{question}

	\begin{question}[Boundedness of integral operators associated with $\omega_\Omega$] \rm
		To obtain sufficiency results for the endpoint $p = \infty$, we relied on the boundedness of the integral operator $K$ on $L^2(\Omega)$ with kernel
		\[
		K(x,y)
		= \frac{\omega_\Omega^{\sigma}(x)\,\omega_\Omega^{\tau}(y)}
		{\omega_\Omega^{\sigma+\tau+1}\!\big(\frac{x+y}{2}\big)},
		\]
		see Theorem \ref{Sinfty2}. Understanding this operator for general sets $\Omega$ presents an intriguing problem in convex analysis.  We expect that $K$ is bounded on $L^2(\Omega)$
		for all $\sigma,\tau > -\tfrac{1}{n+1}$,
		and for all $\sigma,\tau > -1/2$ in the case where $\Omega$ is a polyhedron.
	\end{question}

	\bibliographystyle{plain}
	\bibliography{Bampouras-Perfekt}
	
\end{document}